\documentclass[english]{jnsao}
\usepackage[utf8]{inputenc}
\usepackage{lineno}


\usepackage{graphicx}
\usepackage{epstopdf}
\usepackage{algorithmic}
\usepackage{amsopn}
\usepackage[noadjust]{cite}
\usepackage{mathtools}
\usepackage[capitalize,nameinlink]{cleveref}

\usepackage{nicefrac}

\usepackage{subcaption} 
\usepackage{wrapfig}
\usepackage{float}

\usepackage{enumitem}
\setlist[enumerate]{wide=5mm, leftmargin=10mm, rightmargin=5mm, listparindent=5mm, parsep=0mm,
itemsep=2mm,
topsep=4mm}
\setlength{\parindent}{5mm}
\setlength{\parsep}{0mm}

\usepackage{ulem}
\normalem


\ifpdf
\hypersetup{
  pdftitle={Optimal control of plasticity with inertia},
  pdfauthor={S.~Walther}
}
\fi

\title{Optimal control of plasticity with inertia}

\author{
Stephan Walther\thanks{TU Dortmund, Faculty of Mathematics,
Vogelpothsweg 87, 44227 Dortmund, Germany 
  (\email{stephan.walther@tu-dortmund.de},
  \url{http://www.mathematik.tu-dortmund.de/lsx}).}}
\shortauthor{Walther}

\acknowledgements{
    This research was supported by the German Research Foundation (DFG) under grant 
	number~ME 3281/9-1 within the priority program Non-smooth and Complementarity-based
	Distributed Parameter Systems: Simulation and Hierarchical Optimization (SPP~1962).
}

\manuscriptcopyright{{\copyright} the authors}
\manuscriptlicense{CC-BY-SA 4.0}


\newcommand{\define}{\coloneqq}

\newcommand{\shortspace}{\text{ \ \ }}
\newcommand{\mediumspace}{\text{ \ \ \ \ }}
\newcommand{\largespace}{\text{ \ \ \ \ \ \ }}

\newcommand{\bdot}{\boldsymbol{.}}
\newcommand{\boverdot}[1]{\overset{\bdot}{#1}}
\newcommand{\bdoubledot}{\boldsymbol{..}}
\newcommand{\bdoubleoverdot}[1]{\overset{\bdoubledot}{#1}}

\newcommand{\N}{\mathbb{N}}

\newcommand{\R}{\mathbb{R}}
\newcommand{\Rd}{\R^{d}}
\newcommand{\Rdd}{\R^{d \times d}}
\newcommand{\Rdds}{\R^{d \times d}_s}
\renewcommand{\O}{\Omega}

\newcommand{\symnabla}{\nabla^{s}}

\newcommand{\dualpair}[3]{{\langle #1 , #2 \rangle}_{#3}}
\newcommand{\scalarproduct}[3]{\left( #1 , #2 \right)_{#3}}

\newcommand{\norm}[2]{\|#1\|_{#2}}
\newcommand{\bignorm}[2]{\left\lVert#1\right\rVert_{#2}}

\newcommand{\sequence}[2]{\{ #1_{#2} \}_{#2 \in \N}}

\renewcommand{\C}{\mathbb{C}}

\newcommand{\B}{\mathbb{B}}
\newcommand{\D}{\mathbb{D}}
\newcommand{\E}{\mathbb{E}}

\renewcommand{\AA}{\mathcal{A}}

\newcommand{\RR}{\mathcal{R}}
\newcommand{\ZZ}{\mathcal{Z}}

\newcommand{\HH}{\mathcal{H}}
\newcommand{\YY}{\mathcal{Y}}
\renewcommand{\ZZ}{\mathcal{Z}}
\newcommand{\XX}{\mathcal{X}}

\newcommand{\LL}{\mathcal{L}}

\newcommand{\KK}{\mathcal{K}}

\newcommand{\FF}{\mathcal{F}}

\renewcommand{\SS}{\mathcal{S}}
\newcommand{\TT}{\mathcal{T}}

\newcommand{\XXX}{\mathfrak{X}}
\newcommand{\QQQ}{\mathfrak{Q}}
\newcommand{\ZZZ}{\mathfrak{Z}}

\newcommand{\embed}{\hookrightarrow}

\renewcommand{\max}{\operatorname{max}}
\renewcommand{\div}{\operatorname{div}}

\DeclareMathOperator*{\argmin}{arg\,min}


\newtheorem{assumption}[theorem]{Assumption}
\newtheorem{notation and assumption}[theorem]{Notation and Assumption}


\manuscriptsubmitted{2021-02-05}
\manuscriptaccepted{2021-10-12}
\manuscriptvolume{2}
\manuscriptnumber{7156}
\manuscriptyear{2021}
\manuscriptdoi{10.46298/jnsao-2021-7156}

\begin{document}

\maketitle

\begin{abstract}
    The paper is concerned with an
    optimal control problem governed by
    the equations of elasto plasticity with
	linear kinematic hardening    
    and the inertia term at small strain.
    The objective is to optimize the
    displacement field and plastic strain
    by controlling volume forces.
    The idea given in
    \cite{groger}
    is used to transform the state equation
    into an
    evolution variational inequality (EVI)
    involving a certain maximal monotone operator.
    Results from
    \cite{paper_abstract}
    are then used to analyze the EVI.
    A regularization is obtained via the
    Yosida approximation of
    the maximal monotone operator,
    this approximation is smoothed
    further to derive optimality conditions
    for the smoothed optimal control problem.
\end{abstract}

\section{Introduction}
\label{sec:1}

We consider the following
optimal control problem governed
by the equations of
elasto plasticity with linear kinematic
hardening and the inertia term
at small strain:
\begin{equation}\label{eq:optimization_problem_inertia}
\left\{\quad
\begin{aligned}
	\min \shortspace
	&
	J(u, \boverdot{u},z,f)
	=
	\Psi(u,\boverdot{u}, z) + \frac{\alpha}{2}\norm{f}{\XXX_c}^2, \\
	\text{ s.t. }
	\shortspace
	&
	\rho\bdoubleoverdot{u}
	-
	\div \mathbb{C} (\symnabla u - z)
	=
	f,
	\\
	&\boverdot{z}
	\in
	A(\mathbb{C}\symnabla u - (\mathbb{C} + \mathbb{B} )z),
	\\
	&
	(u, \boverdot{u}, z)(0) = (u_0, v_0, z_0),
	\\
	&
	u \in H^1(H^1_D(\O;\Rd))
	\cap
	H^2(L^2(\O;\Rd)),
	\\
	&	
	z \in H^1(L^2(\O;\Rdds)),
	\\
	&
	f \in \XXX_c.
\end{aligned}\right.
\end{equation}
Herein,
$\Omega \subset \Rd$
is the body under consideration
with density
$\rho$,
where
$d \in \N$
is the dimension.
Its boundary is split into two
disjoints parts
$\Gamma_D$
and
$\Gamma_N$.
Furthermore,
$u : [0, T] \times \Omega \to \Rd$
is the displacement field
and
$z : [0, T] \times \Omega \to \Rdds$
the plastic strain.
The initial data
$(u_0, v_0, z_0)$
is given and fixed.
The volume force is given by
$f : [0, T] \times \O \to \Rd$.
The time derivative of, for instance, the plastic strain
is denoted by
$\boverdot{z}$
and the symmetric gradient by
$\symnabla = \nicefrac{1}{2}(\nabla + \nabla^\top)$.
Moreover,
$\C$
is the elasticity tensor and
$\B$
the hardening parameter.
The flow rule is represented by the
maximal monotone operator
$A$,
in
\cref{sec:examples}
below we will choose the
von-Mises flow rule.
The control space
$\XXX_c$
is a nonempty and closed subspace of
$H^1([0,T];L^2(\O;\Rd))$
and
$\alpha > 0$
a fixed Tikhonov parameter.
Note that higher time regularity
of the control
was also required when analyzing other
problems with a
non-smooth hyperbolic evolution structure,
e.g. in
\cite{yousept2017optimal}.
The precise definitions and
assumptions are presented in
\cref{sec:2}
below.
Note that the problem
\cref{eq:optimization_problem_inertia}
is formulated only with the
displacement and plastic strain as state
variables.
The stress is normally given by
$\sigma = \C(\symnabla u - z)$
(and can thus be easily integrated in
$\Psi$),
however,
we eliminated it in
\cref{eq:optimization_problem_inertia}
for convenience.
Additionally,
the Dirichlet displacement and
Neumann boundary forces
(see the definition of the
$\div$-operator
in
\cref{sec:2})
are set to
zero,
cf.
\cref{rem:Neumann_and_Dirichlet_inertia}
below.
Regarding a detailed description
and derivation of the plasticity model, 
we refer to
\cite{simohughes, OttosenRistinmaa2005:1, lubliner2008plasticity}.

Let us put our work into perspective.
Optimal control problems governed by
plasticity were consider in
\cite{HerzogMeyerWachsmuth2010:2, allaire, wachsmuth, Wac12, Wac15, Wac16, paper_abstract, paper_stress, paper_displacement}.
The articles
\cite{HerzogMeyerWachsmuth2010:2, allaire}
are concerned with the static case of
elasto plasticity,
for further articles of the static case
we refer to the references therein.
For the time dependent quasi-static case
we are only aware of
\cite{wachsmuth, Wac12, Wac15, Wac16, paper_abstract, paper_stress, paper_displacement}.
The articles
\cite{wachsmuth, Wac12, Wac15, Wac16}
and the application in
\cite{paper_abstract}
are devoted to the case of
elasto plasticity  with
linear kinematic hardening,
whereas
\cite{paper_stress, paper_displacement}
are concerned with the case of
perfect plasticity,
that is,
with no hardening.
In contrast,
we consider the case of elasto plasticity
with the inertia term
(and linear kinematic hardening),
that is,
the second time derivative
of the displacement
(the accelaration)
multiplied by the density
$\rho$
is present
in the balance of momentum.
Due to this inertia term,
the equations are physically more
reasonable than
the quasi-static case
(of course,
the solution to both
systems might not differ much
when the accelaration of the body
is small,
which is the reasoning when
neglecting the inertia term).
As said above,
the application in
\cite{paper_abstract}
investigates quasi-static
(homogenized) plasticity
with hardening.
This application is analyzed by
applying results
concerned with an abstract
optimal control problem
governed by an first-order
evolution variational inequality
(EVI)
involving a maximal monotone operator.
As we will see below,
the state equation in
\cref{eq:optimization_problem_inertia}
can also be transformed into
such an abstract EVI.
Let us note that the existence
of a solution was already proven in
\cite[Theorem 5.1]{groger}
by using essentially the same
transformation into an EVI
as we will do.
However, 
there it was transformed into a
second order EVI
and the maximal monotonicity of the
(slightly different)
operator given therein
was proven in another way.
In contrast,
we consider a first order
EVI
and will provide the
concrete form of the resolvent in
\cref{prop:concrete_form_of_yosida_inertia}
(which will also be used later in
\cref{sec:inertia_oc_oc}
to provide optimality conditions),
the maximal monotonicity of our operator
will then follow easily.
Having transformed the state equation,
we can apply the results from
\cite{paper_abstract}.
We only have to heed two differences
between our EVI and the EVI analyzed in
\cite{paper_abstract}.
First,
our maximal monotone operator
does not fulfill some properties
required in
\cite{paper_abstract},
second,
the given data are more regular in time than in
\cite{paper_abstract},
as we will elaborate on at the end of
\cref{sec:inertia_se_eoas}.
However,
the better regularity in time will compensate
the missing properties of our
maximal monotone operator,
so that the unique existence of a solution
can still be shown
(\cref{thm:existence_of_a_solution_to_the_state_equation_inertia})
and thus we can apply results from
\cite{paper_abstract}.
There is a large list of literature on plasticity with inertia,
we only refer to
\cite{anzellotti1987dynamical,
babadjian2015approximation,
davoli2019dynamic,
dal2014quasistatic,
davoli2019dynamic2,
bartels2008thermoviscoplasticity}
and the references therein.
However,
to best of the author's knowledge,
there exists no contribution
to optimal control of plasticity
with the inertia term,
except in
\cite{walther}. 
We emphasize that this paper is essentially based on
\cite[Part IV]{walther}
and on the transformation idea from
\cite{groger}.

The paper is organized as follows.
After introducing our notation and
standing assumptions in
\cref{sec:2},
we transform the state equation in
\cref{eq:optimization_problem_inertia}
into a first-order EVI,
prove the unique existence
for given data
and
provide regularization and
convergence results in
\cref{sec:inertia_se}.
Afterwards,
in
\cref{sec:inertia_oc},
we analyze the optimal control problem
\cref{eq:optimization_problem_inertia},
show the existence of a global solution,
provide an approximation result via a
regularized problem and finally present
optimality conditions.

\section{Notation and Standing Assumptions}
\label{sec:2}

\paragraph{Notation}
When
$X$
is a normed vector space we
denote its norm by
$\norm{\cdot}{X}$.
For normed vector spaces
$X$
and
$Y$
we denote the space of
linear and continuous functions on
$X$
with values in
$Y$
by
$\LL(X;Y)$.
We abbreviate
$\LL(X) \define \LL(X;X)$.
The dual space of
$X$
is denoted by
$X^* = \LL(X;\R)$.
The inner product
of a Hilber space
$H$
is denoted by
$\scalarproduct{\cdot}{\cdot}{H}$.
For the whole paper, we fix the final time $T > 0$.
For $t > 0$ we denote the Bochner space of square-integrable functions on the
time interval $[0,t]$ by $L^2([0,t]; X)$, the Bochner-Sobolev space by $H^1([0,t]; X)$
and the space of continuous functions by $C([0,t]; X)$.
We furthermore abbreviate
$L^2(X) := L^2([0,T]; X)$,
$H^1(X) := H^1([0,T]; X)$ and
$C(X) := C([0,T]; X)$.
When $G \in \LL(X; Y)$ is a linear and continuous operator, we can define an operator
in $\LL(L^2(X); L^2(Y))$ by $G(u)(t) := G(u(t))$ for all $u \in L^2(X)$ and for almost all $t \in [0,T]$,
we denote this operator also by $G$, that is, $G \in \LL(L^2(X); L^2(Y))$, and analog for
Bochner-Sobolev spaces, i.e., $G \in \LL(H^1(X); H^1(Y))$.
Given a coercive
and symmetric
operator $G \in \LL(H)$ in a real Hilbert space $H$, 
we denote its coercivity constant by $\gamma_G$, i.e., $\scalarproduct{Gh}{h}{H} \geq 
\gamma_G \norm{h}{H}^2$ for all $h \in H$.
With this operator we can define a new scalar product, which induces an equivalent norm, by
$H \times H \ni (h_1, h_2) \mapsto \scalarproduct{Gh_1}{h_2}{H} \in \mathbb{R}$.
We denote the Hilbert space equipped with this scalar product by $H_G$, that is
$\scalarproduct{h_1}{h_2}{H_G} = \scalarproduct{Gh_1}{h_2}{H}$ for all $h_1,h_2 \in H$.
If $p \in [1, \infty]$, then we denote its conjugate exponent by
$p'$, that is $\frac{1}{p} + \frac{1}{p'} = 1$.
Throughout the paper, by $L^p(\Omega; M)$ we denote Lebesgue spaces with values in $M$, 
where $p \in [1, \infty]$ and $M$ is a finite dimensional space.
By
$W^{1,p}(\O;M)$
we denote Sobolev spaces
and
$W^{1,p}_D(\O;M)$
is the subspace containing functions which traces
are zero on $\Gamma_D$.
For the dual spaces of
$W^{1,p'}(\O;M)$
and
$W^{1,p'}_D(\O;M)$
we write
$W^{-1,p}(\O;M)$
and
$W^{-1,p}_D(\O;M)$.
We use the usual abbreviations
$H^1(\O; M) \define W^{1,2}(\O; M)$,
$H^1_D(\O; M) \define W^{1,2}_D(\O; M)$,
$H^{-1}(\O; M) \define W^{-1,2}(\O; M)$
and
$H^{-1}_D(\O; M) \define W^{-1,2}_D(\O; M)$.
Finally, by $\Rdds$, we denote the space of symmetric matrices and 
$c, C>0$ are generic constants.

\subsection*{Standing Assumptions}

The following standing assumptions are tacitly assumed for the rest of the
paper without mentioning them every time.

\paragraph{Domain}
The domain $\Omega\subset\Rd$, $d\in \mathbb{N}$, is bounded 
with Lipschitz boundary $\Gamma$. The boundary consists of two disjoint 
measurable parts $\Gamma_N$ and $\Gamma_D$
such that $\Gamma=\Gamma_N \cup \Gamma_D$. While 
$\Gamma_N$ is a relatively open subset, $\Gamma_D$ is a relatively closed 
subset of $\Gamma$ with positive boundary measure. 
In addition, the set $\Omega \cup \Gamma_N$ is regular in the sense of Gr\"oger, cf.\ \cite{Gro89}.

Furthermore,
the density of $\O$ is given by $\rho > 0$.

\paragraph{Coefficients}
The elasticity tensor and the hardening parameter 
satisfy $\C, \B \in \LL(\Rdds)$
and are symmetric and coercive, i.e.,
there exist constants
$\underline{c}>0$
and
$\underline{b} > 0$ such that
$\scalarproduct{\mathbb{C}\sigma}{\sigma}{\Rdds} \geq \underline{c} \ \norm{\sigma}{\Rdds}^2$
and $\scalarproduct{\mathbb{B}\sigma}{\sigma}{\Rdds} \geq \underline{b} \ \norm{\sigma}{\Rdds}^2$
for all $\sigma \in \Rdds$.

We abbreviate further
\begin{align}
\label{eq:def_D_E_inertia}
	\mathbb{D}
	:=
	\mathbb{B}(\mathbb{C} + \mathbb{B})^{-1} \mathbb{C}
	\in \LL(\Rdds)
	\mediumspace
	\text{and}
	\mediumspace
	\mathbb{E}
	:=
	\mathbb{C}(\mathbb{C} + \mathbb{B})^{-1}
	\in \LL(\Rdds)
\end{align}
and note that
$\mathbb{D}$
is symmetric and coercive,
according to
\cite[Lemma 4.2]{groger}.
Moreover,
for instance,
we denote the adjoint of
$\E$
by
$\E^{\top}$.

\paragraph{Initial data}

We choose
$u_0, v_0 \in H^1_D(\O;\Rd)$
and
$z_0 \in L^2(\O;\Rdds)$
and define
$q_0 \define \mathbb{C} \symnabla u_0 - (\mathbb{C} + \B )z_0
\in L^2(\O;\Rdds)$.
Moreover,
we assume that
$(u_0, v_0, q_0)$
is an element of
$D(\AA)$,
where
$D(\AA)$
is given in
\cref{def:AA_and_spaces_inertia}.
 
\paragraph{Operators}
Throughout the paper, $\symnabla := \frac{1}{2}(\nabla + \nabla^\top): W^{1,p}(\Omega;\Rd) \to L^p(\Omega;\Rdds)$ denotes the linearized strain.
Its restriction to $W^{1,p}_D(\Omega; \Rdds)$ is denoted by the same symbol and, for the adjoint of this restriction, we write
$-\div := (\symnabla)^* : L^{p'}(\Omega;\Rdds) \to W^{-1,p'}_D(\Omega;\Rd)$.

The operator $A \colon L^2(\O;\Rdds) \rightarrow 2^{L^2(\O;\Rdds)}$ is
maximal monotone with domain $D(A)$.
Furthermore,
by
$A_\lambda \colon L^2(\O;\Rdds) \rightarrow L^2(\O;\Rdds)$,
$\lambda > 0$,
we denote the Yosida approximation of
$A$
and by
$R_\lambda = (I + \lambda A)^{-1}$
the resolvent of
$A$,
so that
$A_\lambda = \frac{1}{\lambda}(I - R_\lambda)$.
Moreover,
for every
$\lambda > 0$
the resolvent
$R_\lambda$
can be expressed pointwise,
that is,
there exists
$\tilde{R}_\lambda : \Rdds \to \Rdds$
such that
\begin{align}
\label{eq:resolvent_pointwise_inertia}
	R_\lambda(\tau)(x)
	=
	\tilde{R}_\lambda(\tau(x))
	\mediumspace
	\text{ f.a.a. }
	x \in \O
	\text{ and }
	\forall
	\tau \in L^2(\O;\Rdds).
\end{align}
With a slight abuse of notation we
denote also
$\tilde{R}_\lambda$
by
$R_\lambda$.
It is to be noted that
this is the case
for the subdifferential
of an indicator function
of a pointwise defined set,
where the resolvent is simply
the projection onto this set,
this example will be considered in
\cref{sec:examples}
below.
For further reference on maximal monotone operators, we refer
to~\cite{brezis},~\cite[Ch.~32]{zeidler2a},~\cite[Ch.~55]{zeidler3},
and~\cite[Ch.~55]{showalter}.

\paragraph{Optimization problem}

By
$J : L^2(\HH) \times \XXX_c \rightarrow \R$,
$J(u, v, z, f) \define \Psi(u, v, z) + \frac{\alpha}{2}\norm{f}{\XXX_c}^2$ 
we denote the objective function,
where
$\HH$
is given in
\Cref{def:AA_and_spaces_inertia}
and the control space
$\XXX_c$
is a Hilbert space and embedded into
$H^1(L^2(\O;\Rd))$.
We assume that
$\Psi: L^2(\HH) \to \R$
is weakly lower semicontinuous,
continuous and
bounded from below and
that the Tikhonov parameter
$\alpha$ is a positive constant.

\section{State Equation}
\label{sec:inertia_se}

We begin our investigation
with the state equation.
At first we give the definition of a solution
and then transform the
state equation into an EVI
with
a new (maximal monotone) operator
$\AA$.
In
\cref{sec:inertia_se_eoas}
we prove the existence
of a solution by showing
that the operator
$\AA$ is maximal monotone,
then we can apply
\cite[Theorem 55.A]{zeidler3}.
Finally,
in
\cref{sec:inertia_se_racr}
we can use some results in
\cite{paper_abstract}
to obtain convergence results.

The formal strong formulation
of the state equation
reads
\begin{subequations}\label{eq:state_equation_inertia}
\begin{alignat}{2}
	\rho\bdoubleoverdot{u}
	-
	\nabla \cdot \mathbb{C} (\symnabla u - z)
	&=
	f
	\largespace &&
	\text{ in }
	\O,
	\label{eq:state_equation_a_inertia}
	\\
	\nu \cdot \mathbb{C} (\symnabla u - z) &= 0
	&&
	\text{ on }
	\Gamma_N,
	\label{eq:state_equation_b_inertia}
	\\
	u &= 0
	\largespace &&
	\text{ on }
	\Gamma_D,
	\label{eq:state_equation_c_inertia}
	\\
	\boverdot{z} &\in A(\mathbb{C} \symnabla u - (\C + \B) z)
	\largespace &&
	\text{ in }
	\O,
	\label{eq:state_equation_e_inertia}
	\\
	(u, \boverdot{u}, z)(0) &= (u_0, v_0, z_0)
	\largespace &&
	\text{ in }
	\O.
\end{alignat}
\end{subequations}
Note that we have assumed
in the standing assumptions above
that the density
is constant in $\O$.
It is possible to consider a density which
has a spatial dependency
(that is,
a function from
$\O$
to
$(0, \infty)$),
one has then in particular to verify that
the operator
$Q$,
given in
\cref{def:AA_and_spaces_inertia},
is well defined,
that is,
the multiplication of
$\rho$
(and also
$\nicefrac{1}{\rho}$)
with a Sobolev function
is again a Sobolev function.
However,
for simplicity we assume that
$\rho$
is constant.

We impose the following
assumption for the rest of this
section.

\begin{assumption}[Standing assumption for \cref{sec:inertia_se}]
	Let
	$f \in H^1(L^2(\O;\Rdds))$
	be given.
\end{assumption}

\subsection{Definition and Transformation}
\label{sec:inertia_se_dat}

Let us begin with
the definition of a solution
to the state equation
\cref{eq:state_equation_inertia}.

\begin{definition}[Solution to plasticity with inertia]
\label{def:state_equation_inertia}
	We call
	$u \in H^1(H^1_D(\O;\Rd)) \cap H^2(L^2(\O;\Rd))$
	and
	$z \in H^1(L^2(\O;\Rdds))$
	solution of
	\cref{eq:state_equation_inertia}
	if
	\begin{align}
	\label{eq:state_equation_def_inertia}
	\begin{split}
		\rho\bdoubleoverdot{u}
		-
		\div \mathbb{C} (\symnabla u - z)
		&=
		f,
		\\
		\boverdot{z}
		&\in
		A(\mathbb{C}\symnabla u - (\mathbb{C} + \B )z),
		\\
		(u, \boverdot{u}, z)(0) &= (u_0, v_0, z_0)
	\end{split}
	\end{align}
	holds.
\end{definition}

Before we can transform
the state equation
into an
EVI
we need to reformulate it,
to this end we introduce the following

\begin{definition}[$z$ to $q$ mapping]
\label{def:QQ_and_ZZ_inertia}
	We define
	\begin{align*}
		\QQQ : H^1(\O;\Rd) \times L^2(\O;\Rdds)
		\rightarrow L^2(\O;\Rdds),
		\largespace
		(u,z) \mapsto \mathbb{C}\symnabla u - (\mathbb{C} + \B )z
	\end{align*}
	and its inverse (for fixed
	$u$)
	\begin{align*}
		\ZZZ : H^1(\O;\Rd) \times L^2(\O;\Rdds)
		\rightarrow L^2(\O;\Rdds),
		\largespace
		(u,q) \mapsto (\mathbb{C} + \B )^{-1}(\mathbb{C}\symnabla u - q).
	\end{align*}
\end{definition}

These operators will
also be used to transform the optimal
control problem in
\cref{sec:inertia_oc_eaaooc}
below.

For the following
lemma,
we recall the definition of
$\D$
and
$\E$
given in
the standing assumptions above,
that is,
$\mathbb{D}
=
\mathbb{B}(\mathbb{C} + \mathbb{B})^{-1} \mathbb{C}$
and
$\mathbb{E}
=
\mathbb{C}(\mathbb{C} + \mathbb{B})^{-1}$.

\begin{lemma}[Transformation of $z$ to $q$]
\label{lem:transformation_of_z_to_q_inertia}
	We consider
	\begin{align}
	\label{eq:state_equation_transformed_inertia}
	\begin{split}
		\rho\bdoubleoverdot{u}
		-
		\div(
			\D \symnabla u
			+
			\E q
		)
		&=
		f,
		\\
		(\mathbb{C} + \B )^{-1}\boverdot{q}
		+
		A(q)
		-
		\E^{\top} \symnabla \boverdot{u}
		&\ni
		0,
		\\
		(u, \boverdot{u},q)(0)
		=
		(u_0, v_0, q_0)
		&=
		(u_0, v_0, \QQQ(u_0, z_0))
	\end{split}
	\end{align}
	for functions
	$u \in H^1(H^1_D(\O;\Rd)) \cap H^2(L^2(\O;\Rd))$,
	$q \in H^1(L^2(\O;\Rdds))$.
	Recall that
	$\E^{\top}$
	is the adjoint of
	$\E$.
	Then the following holds:
	
	When
	$(u,z)$
	is a solution of
	\cref{eq:state_equation_inertia},
	then
	$(u, q) = (u, \QQQ(u,z))$
	solves
	\cref{eq:state_equation_transformed_inertia}.
	Vice versa,
	when $(u,q)$
	solves
	\cref{eq:state_equation_transformed_inertia},
	then
	$(u, z) = (u,\ZZZ(u,q))$
	is a solution of
	\cref{eq:state_equation_inertia}.
\end{lemma}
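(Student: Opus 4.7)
The plan is a direct computation in both directions, built around two algebraic identities that follow from the symmetry of $\C$, $\B$ and $\C+\B$. First I would record the two identities:
\begin{align*}
    \D = \B(\C+\B)^{-1}\C = \C(\C+\B)^{-1}\B,
    \qquad
    \E^\top = \bigl(\C(\C+\B)^{-1}\bigr)^{\!\top} = (\C+\B)^{-1}\C.
\end{align*}
The first follows because the left-hand side is symmetric by the cited \cite[Lemma 4.2]{groger} and the transpose equals $\C^\top((\C+\B)^{-1})^\top\B^\top = \C(\C+\B)^{-1}\B$; the second uses only the symmetry of $\C$ and $\C+\B$. These identities make $\D\symnabla u + \E q$ and $\E^\top \symnabla \boverdot{u}$ interact nicely with $\QQQ$ and $\ZZZ$.

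For the forward direction, let $(u,z)$ solve \cref{eq:state_equation_inertia} and put $q := \QQQ(u,z) = \C\symnabla u - (\C+\B)z$. Since $u \in H^1(H^1_D(\O;\Rd)) \cap H^2(L^2(\O;\Rd))$ and $z \in H^1(L^2(\O;\Rdds))$, the operator $\QQQ$ maps into $H^1(L^2(\O;\Rdds))$, so $q$ has the required regularity. Solving $q = \QQQ(u,z)$ for $z$ gives
\begin{align*}
    z = (\C+\B)^{-1}(\C\symnabla u - q),
    \qquad
    \symnabla u - z = (\C+\B)^{-1}(\B\symnabla u + q),
\end{align*}
so that $\C(\symnabla u - z) = \C(\C+\B)^{-1}\B\symnabla u + \C(\C+\B)^{-1}q = \D\symnabla u + \E q$ by the first identity. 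Substituting into \cref{eq:state_equation_a_inertia} gives the first line of \cref{eq:state_equation_transformed_inertia}. Differentiating $z = \ZZZ(u,q)$ in time yields $\boverdot{z} = (\C+\B)^{-1}\C \symnabla \boverdot{u} - (\C+\B)^{-1}\boverdot{q} = \E^\top \symnabla \boverdot{u} - (\C+\B)^{-1}\boverdot{q}$ by the second identity. Rearranging the inclusion $\boverdot{z} \in A(q)$ (noting $q = \QQQ(u,z) = \C\symnabla u - (\C+\B)z$) gives exactly
\begin{align*}
    (\C+\B)^{-1}\boverdot{q} + A(q) - \E^\top \symnabla \boverdot{u} \ni 0.
\end{align*}
The initial condition $q(0) = \QQQ(u_0, z_0) = q_0$ is immediate.

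The reverse direction is fully analogous: given a solution $(u,q)$ of \cref{eq:state_equation_transformed_inertia}, define $z := \ZZZ(u,q) = (\C+\B)^{-1}(\C\symnabla u - q)$, which again lies in $H^1(L^2(\O;\Rdds))$. The same identities show $\D\symnabla u + \E q = \C(\symnabla u - z)$, yielding \cref{eq:state_equation_a_inertia}; differentiating $z = \ZZZ(u,q)$ in time and inserting into the inclusion yields $\boverdot{z} \in A(\C\symnabla u - (\C+\B)z)$; and $z(0) = \ZZZ(u_0, q_0) = z_0$ by construction.

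The only real subtlety I anticipate is bookkeeping around the symmetry identity $\D = \C(\C+\B)^{-1}\B$, since $\C$ and $\B$ need not commute; everything else is a mechanical rewrite using the bijection $z \leftrightarrow q$ provided by $\QQQ$ and $\ZZZ$.
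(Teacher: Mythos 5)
Your proof is correct and follows essentially the same direct substitution argument as the paper. The only minor difference is that by factoring out $\C$ from $\symnabla u - z$ first you arrive at the coefficient $\C(\C+\B)^{-1}\B$, which needs the symmetry of $\D$ to match the definition, whereas the paper computes $\C\symnabla u - \C z$ directly and obtains $\C - \C(\C+\B)^{-1}\C = \B(\C+\B)^{-1}\C = \D$, which is the definitional form without invoking symmetry.
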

\begin{proof}
	Both implications can be
	immediately obtained by using
	the definition of
	$\QQQ$
	and
	$\ZZZ$
	and inserting
	$z$
	in
	\cref{eq:state_equation_def_inertia}
	and
	$q$
	in
	\cref{eq:state_equation_transformed_inertia},
	respectively
	(note that
	$\mathbb{C} - \mathbb{C}(\mathbb{C} + \B )^{-1}\mathbb{C}
	= (I - \mathbb{C}(\mathbb{C} + \B )^{-1})\mathbb{C}
	= \B (\mathbb{C} + \B )^{-1}\mathbb{C}
	= \D$).
\end{proof}

We are now in the position to introduce the EVI,
respectively the operator
$\AA$.

\begin{definition}[The operator $\AA$]
\label{def:AA_and_spaces_inertia}
	For
	$p \in [1,\infty]$
	we set
	\begin{align*}
		\YY_p
		\define
		W^{1,p}_D(\O;\Rd) \times L^2(\O;\Rd) \times L^p(\O;\Rdds)
		\mediumspace
		\text{and}
		\mediumspace
		\HH \define \YY_2.
	\end{align*}
	The scalar product on
	$\HH$
	is defined by
	\begin{align*}
		&\scalarproduct{(u_1,v_1,q_1)}{(u_2,v_2,q_2)}{\HH}
		\\
		&\qquad
		\define
		\scalarproduct{\D\symnabla u_1}{\symnabla u_2}{L^2(\O;\Rdds)}
		+
		\scalarproduct{v_1}{v_2}{L^2(\O;\Rd)}
		+
		\scalarproduct{q_1}{q_2}{L^2(\O;\Rdds)}
	\end{align*}
	(recall that
	$\D$
	is symmetric and coercive).
	We define
	\begin{align*}
		\AA : D(\AA) \rightarrow 2^\HH,
		\mediumspace
		(u,v,q) \mapsto
		\begin{pmatrix}
			-v
			\\
			-\div(\D\symnabla u + \E q)
			\\
			A(q)
			-
			\E^{\top} \symnabla v
		\end{pmatrix}
	\end{align*}
	with the domain
	\begin{align*}
		D(\AA)
		\define
		\{
			(u,v,q) \in H^1_D(\O;\Rd) \times H^1_D(\O;\Rd) \times D(A)
			:
			\div (\D\symnabla u + \E q)
			\in
			L^2(\O;\Rd)
		\}.
	\end{align*}
	Moreover,
	we set
	\begin{align*}
		R : L^2(\O;\Rd) \rightarrow \YY_\infty,
		\largespace
		f \mapsto (0, f, 0)
	\end{align*}
	and
	\begin{align*}
		Q := (I, (\nicefrac{1}{\rho}) I, \mathbb{C} + \mathbb{B}).
	\end{align*}
\end{definition}

\begin{lemma}[Transformation into an EVI]
\label{lem:transformation_into_an_EVI_inertia}
	The tuple
	$(u,q)$
	solves
	\cref{eq:state_equation_transformed_inertia}
	if and only if
	$(u,v,q) = (u,\boverdot{u},q) \in H^1(\HH)$
	is a solution of
	\begin{align}
	\label{eq:state_equation_as_EVI_inertia}
	\begin{split}
		Q^{-1}(\boverdot{u},\boverdot{v}, \boverdot{q}) + \AA(u,v,q) \ni Rf,
		\mediumspace
		(u, v, q)(0) = (u_0, v_0, q_0).
	\end{split}
	\end{align}
\end{lemma}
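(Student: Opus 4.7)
The proof will be a direct componentwise unpacking of the abstract inclusion \cref{eq:state_equation_as_EVI_inertia}; there is no real obstacle, only some bookkeeping of the three components and the function spaces involved.

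First I would make the EVI explicit. Since $Q^{-1} = (I, \rho I, (\C+\B)^{-1})$ acts componentwise and $Rf = (0,f,0)$, the inclusion
\[
    Q^{-1}(\boverdot u, \boverdot v, \boverdot q) + \AA(u,v,q) \ni Rf
\]
decouples into the three scalar conditions
\begin{align*}
    \boverdot u - v &= 0, \\
    \rho\boverdot v - \div(\D\symnabla u + \E q) &= f, \\
    (\C+\B)^{-1}\boverdot q + A(q) - \E^\top \symnabla v &\ni 0,
\end{align*}
while the initial datum $(u,v,q)(0) = (u_0, v_0, q_0)$ decouples into $u(0)=u_0$, $v(0)=v_0$ and $q(0)=q_0$. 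These pointwise evaluations in time are meaningful because $H^1(\HH) \embed C(\HH)$.

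Second, for the implication EVI $\Rightarrow$ \cref{eq:state_equation_transformed_inertia}, I would exploit the first equation to identify $v = \boverdot u$ a.e.\ in time. The membership $(u,v,q) \in H^1(\HH)$ then gives $v \in H^1(L^2(\O;\Rd))$, hence $u \in H^2(L^2(\O;\Rd))$, and $v \in L^2(H^1_D(\O;\Rd))$, which matches the stipulated $u \in H^1(H^1_D(\O;\Rd))$. Plugging $v = \boverdot u$ into the remaining two lines reproduces the first two equations of \cref{eq:state_equation_transformed_inertia}, and $v(0) = v_0$ becomes $\boverdot u(0) = v_0$; the initial data for $u$ and $q$ transfer directly.

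Third, for the converse, given $(u,q)$ satisfying \cref{eq:state_equation_transformed_inertia} with the stated regularity, I would set $v \define \boverdot u$. The hypotheses $u \in H^1(H^1_D(\O;\Rd)) \cap H^2(L^2(\O;\Rd))$ and $q \in H^1(L^2(\O;\Rdds))$ yield $(u,v,q) \in H^1(\HH)$; the first component of the EVI holds by construction, while the second and third components are exactly the two equations of \cref{eq:state_equation_transformed_inertia}. The initial condition in $\HH$ follows from $v(0) = \boverdot u(0) = v_0$ together with $u(0)=u_0$ and $q(0)=q_0$. The only point requiring care is the slot-by-slot regularity check in $\HH$; once this is settled, the equivalence is immediate.
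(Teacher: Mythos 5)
Your proof is correct and takes the same approach as the paper, which simply states that the equivalence ``follows immediately from the definition of $\AA$''; you have just unpacked that definition componentwise and verified the regularity bookkeeping explicitly, which is exactly what the paper leaves implicit.
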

\begin{proof}
	This follows immediately from the definition of
	$\AA$.
\end{proof}

\begin{remark}[Consequences of the transformation]
	We note that this transformation
	has some consequences for the optimal control
	problem and its regularization discussed in
	\cref{sec:inertia_oc}.
	A first approach to regularize
	\cref{eq:state_equation_inertia}
	would be to simply regularize the operator
	$A$,
	as we did in the case of elasto plasticity
	in \cite[Part III]{walther}.
	However,
	our approach is different,
	due to the transformation into an EVI
	we can regularize the operator
	$\AA$,
	this is our method in
	\cref{sec:inertia_se_racr}
	and
	\cref{sec:inertia_oc}.
	We also mention that the fact that
	$v = \boverdot{u}$
	will be lost after the regularization
	(cf.
	\cref{cor:concrete_form_of_yosida_inertia}
	and
	\cref{def:AAs_inertia})
	and that we will transform
	our objective function in
	\cref{def:transformed_objective_function},
	so that we obtain an optimal control
	problem with respect to the state
	$(u, v, q)$
	in
	\cref{eq:optimization_problem_inertia_q}.
	The optimality conditions given in
	\cref{thm:KKT_conditions_inertia}
	below are then also formulated for this
	transformed problem.
\end{remark}

\begin{remark}[Neumann surface forces and Dirichlet displacement]
\label{rem:Neumann_and_Dirichlet_inertia}
	Let us shortly discuss some issues with
	possible surface forces and Dirichlet displacements.
	Regarding surface forces,
	they are currently equal to zero and
	contained in the domain
	$D(\AA)$
	by the requirement
	$\div(\D\symnabla u + \E q) \in L^2(\O;\Rd)$.
	Allowing now surface forces which are
	time dependent,
	the domain,
	and thus
	$\AA$
	itself,
	would also depend on the time.
	
	An approach for Dirichlet displacements would be
	to exchange the displacement with a
	``new''
	displacement minus the Dirichlet displacement,
	then one could still define the domain
	$D(\AA)$
	as a subset of
	$H^1_D(\O;\Rd) \times H^1_D(\O;\Rd) \times D(A)$.
	However,
	this would again make the domain
	and the operator itself time dependent
	(the Dirichlet displacement would
	occur also in the operator).
	
	In both cases one could still
	show that the arising operator
	is maximal monotone for a fixed time,
	but for different points in time
	the monotonicity would be perturbed
	by the time dependent functions.
	Having now a closer look at	
	\cref{thm:existence_of_a_solution_to_the_state_equation_inertia}
	below,
	respectively
	\cite[Theorem 55.A]{zeidler3},
	we see that a comparison of two
	different points in time is used
	to derive a priori estimates.
	Following this proof,
	the time depend functions would
	occur and a straightforward
	adaption is not possible.
	
	At this juncture,
	let us also elaborate on the
	underlying spaces of the operator
	$\AA$.
	One might try to exchange
	$L^2(\R;\O)$
	with a negative Sobolev space
	in the definition of
	$\HH$
	to allow surface forces.
	However,
	with this definition of
	$\HH$,
	for instance,
	the proof of
	\cref{lem:AA_to_A_in_scalarproduct_inertia}
	(which is used to show the
	monotonicity of
	$\AA$)
	would not be valid anymore.
	Thus, our choice of
	$\HH$
	seems reasonable.
\end{remark}

\subsection{Existence of a Solution}
\label{sec:inertia_se_eoas}

We prove now the existence of
a solution to
\cref{eq:state_equation_inertia}
by using
an existence result for EVIs involving a
maximal monotone operator given in
\cite[Theorem 55.A]{zeidler3},
thus we need to
show that
$\AA$
is maximal monotone.
Since the monotonicity of
$\AA$
can be easily obtained
(cf.
\cref{lem:AA_to_A_in_scalarproduct_inertia}),
it remains to prove that the resolvent exists
(cf. the proof of
\cref{prop:AA_maximal_monotone_inertia}).
For this
it is sufficient to show the existence
of a solution to
\cref{eq:TR_def_inertia}
in the case
$p = 2$.
However,
since the existence and
Lipschitz continuity for
$p > 2$ is needed to derive
optimality conditions in
\cref{sec:inertia_oc_oc},
we already provide the
following corollary
for later needed results.

\begin{corollary}[Extended nonlinear elasticity]
\label{cor:W1r_existence_inertia}
	Let
	$\lambda > 0$
	and
	$p \in [2,\overline{p}]$,
	where
	$\overline{p}$
	is from
	\cite[Theorem 1.1]{herzog},
	with
	$2 - \frac{d}{2} \geq -\frac{d}{\overline{p}}$.
	We assume that there exist
	$m,M,D \in \R$,
	$D \geq 0 < m \leq M$,
	such that the family of functions
	$\{ b_\sigma : \O \times \Rdds
	\rightarrow \Rdds \}_{\sigma \in \Rdds}$
	has the following properties:
	\begin{align}
		&
		b_{0}(\cdot,0) \in L^\infty(\O; \Rdds),
		\label{eq:W1r_existence_inertia_1}
		\\
		&
		b_{\sigma}(\cdot,\tau) \text{ is measurable},
		\label{eq:W1r_existence_inertia_2}
		\\
		&
		(b_\sigma(x,\tau)
		-
		b_{\overline{\sigma}}(x,\overline{\tau}))
		\colon
		(\tau - \overline{\tau})
		+
		D (| \sigma - \overline{\sigma} |
		+
		| \tau - \overline{\tau} |)
		| \sigma - \overline{\sigma} |
		\geq
		m | \tau - \overline{\tau} |^2,
		\label{eq:W1r_existence_inertia_3}
		\\
		&
		| b_\sigma(x,\tau)
		-
		b_{\overline{\sigma}}(x,\overline{\tau}) |
		\leq
		M \Big{(}
			| \tau - \overline{\tau} |
			+
			| \sigma - \overline{\sigma} |
		\Big{)}
		\label{eq:W1r_existence_inertia_4}
	\end{align}
	for almost all
	$x \in \O$
	and all
	$\sigma, \overline{\sigma}, \tau, \overline{\tau}
	\in \Rdds$.
	
	Then for every
	$\varphi \in L^p(\O;\Rdds)$
	and
	$L \in W^{-1,p}_D(\O;\Rd)$
	there exists a unique solution
	$u \in W^{1,p}_D(\O;\Rd)$
	of
	\begin{align*}
		-\div b_{\varphi}(\cdot,\symnabla u)
		+
		\frac{u}{\lambda^2}
		=
		L.
	\end{align*}
	Moreover,
	there exists a constant
	$C$
	such that the inequality
	\begin{align*}
		\norm{u_1 - u_2}{W^{1,p}(\O;\Rd)}
		\leq
		C \Big{(}
			\norm{\varphi_1 - \varphi_2}{L^p(\O;\Rdds)}
			+
			\norm{L_1 - L_2}{W^{-1,p}_D(\O;\Rd)}
		\Big{)}
	\end{align*}
	holds for all
	$\varphi_1, \varphi_2 \in L^p(\O;\Rdds)$
	and
	$L_1, L_2 \in W^{-1,p}_D(\O;\Rd)$,
	where
	$u_1$
	and
	$u_2$
	are the solutions with respect to
	$(\varphi_1, L_1)$
	and
	$(\varphi_2, L_2)$.
\end{corollary}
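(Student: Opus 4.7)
My plan is to deduce the statement from the $W^{1,p}$-existence and Lipschitz-dependence result for Gröger-type mixed boundary value problems cited as \cite[Theorem 1.1]{herzog}, after verifying that the coefficient $b_\varphi$ meets its hypotheses and after handling the zeroth-order term $u/\lambda^2$ as a lower-order perturbation. To see that the hypotheses are met, I fix $\varphi \in L^p(\O;\Rdds)$ and look at the single-argument map $\tau \mapsto b_\varphi(\cdot,\tau)$. Taking $\sigma=\overline{\sigma}=\varphi(x)$ in \cref{eq:W1r_existence_inertia_3} and \cref{eq:W1r_existence_inertia_4} gives strong monotonicity with constant $m$ and global Lipschitz continuity with constant $M$, uniformly in $x$. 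Combined with \cref{eq:W1r_existence_inertia_1}, \cref{eq:W1r_existence_inertia_2}, and the standing assumption that $\O\cup\Gamma_N$ is Gröger-regular, this meets the structural hypotheses of \cite[Theorem 1.1]{herzog}. The zeroth-order term $u/\lambda^2$ only strengthens monotonicity and coercivity; if the cited theorem is stated in pure divergence form I would absorb it by first producing a unique $H^1_D$-solution via Browder--Minty and then feeding $L-u/\lambda^2 \in W^{-1,p}_D(\O;\Rd)$---this is where the exponent condition $2-d/2\geq -d/\bar{p}$ enters, through a Sobolev embedding---as right-hand side into the cited result to lift regularity to $W^{1,p}_D(\O;\Rd)$. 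Uniqueness is then a direct consequence of strong monotonicity.

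For the Lipschitz estimate I plan to use the Lipschitz dependence of the solution map $L\mapsto u$ at fixed coefficient, which also comes out of \cite[Theorem 1.1]{herzog} via the strong monotonicity. Given solutions $u_i$ associated with $(\varphi_i,L_i)$, $i=1,2$, I rewrite the equation for $u_1$ with the coefficient $b_{\varphi_2}$ instead of $b_{\varphi_1}$, obtaining
\begin{align*}
	-\div b_{\varphi_2}(\cdot,\symnabla u_1) + \frac{u_1}{\lambda^2}
	= L_1 - \div\bigl(b_{\varphi_2}(\cdot,\symnabla u_1)-b_{\varphi_1}(\cdot,\symnabla u_1)\bigr).
\end{align*}
Both $u_1$ and $u_2$ then solve the \emph{same} fixed-coefficient equation with different right-hand sides, so the Lipschitz bound for the solution map yields
\begin{align*}
	\norm{u_1-u_2}{W^{1,p}(\O;\Rd)}
	\leq C\Bigl(
		\norm{L_1-L_2}{W^{-1,p}_D(\O;\Rd)}
		+ \norm{b_{\varphi_2}(\cdot,\symnabla u_1)-b_{\varphi_1}(\cdot,\symnabla u_1)}{L^p(\O;\Rdds)}
	\Bigr),
\end{align*}
and \cref{eq:W1r_existence_inertia_4}, applied with $\tau=\overline{\tau}=\symnabla u_1(x)$, bounds the last term by $M\norm{\varphi_1-\varphi_2}{L^p(\O;\Rdds)}$, which is the desired estimate.

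The hardest part, I expect, is verifying that \cite[Theorem 1.1]{herzog} simultaneously accommodates the mixed Dirichlet--Neumann condition, the full exponent range $p\in[2,\bar{p}]$ dictated by the constraint on $\bar{p}$, and---most delicately---the additional zeroth-order contribution $u/\lambda^2$; the first two items should be intrinsic to the cited theorem, while the third is covered by the perturbation argument above. Once this is in place, the remainder of the proof is essentially bookkeeping with the structural inequalities \cref{eq:W1r_existence_inertia_3} and \cref{eq:W1r_existence_inertia_4}.
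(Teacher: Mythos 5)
Your overall strategy matches the paper's: establish existence in $H^1_D$ via Browder--Minty, lift regularity to $W^{1,p}_D$ by feeding the zeroth-order term and the coefficient perturbation into \cite[Theorem 1.1]{herzog} as right-hand-side data, let the exponent constraint enter through the Sobolev embeddings $H^1_D\embed L^q$ and $W^{1,p'}_D\embed L^{q'}$, and close the Lipschitz estimate by a coefficient-swap plus the $p=2$ estimate. That is exactly the route the paper takes (with the cosmetic difference that the paper freezes $u_2$ and compares $A_1(u_2)-A_2(u_2)$, whereas you freeze $u_1$ and compare $b_{\varphi_2}(\cdot,\symnabla u_1)-b_{\varphi_1}(\cdot,\symnabla u_1)$; both are fine).

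There is, however, one genuine gap: your verification of the hypotheses of \cite[Theorem 1.1]{herzog} for an arbitrary $\varphi\in L^p(\O;\Rdds)$ does not go through. You appeal to \cref{eq:W1r_existence_inertia_1}, which only gives $b_0(\cdot,0)\in L^\infty$. Combined with \cref{eq:W1r_existence_inertia_4} (with $\tau=\overline\tau=0$, $\overline\sigma=0$) this yields $|b_\varphi(x,0)-b_0(x,0)|\le M|\varphi(x)|$, hence $b_\varphi(\cdot,0)\in L^p$ but \emph{not} $L^\infty$ unless $\varphi$ itself is essentially bounded. The paper explicitly notes that $\varphi\in L^\infty(\O;\Rdds)$ is needed to satisfy condition (1.6a) of \cite{herzog}, proves the $W^{1,p}$ regularity and the Lipschitz estimate first for $\varphi\in L^\infty$, and only then extends to all $\varphi\in L^p$ by density, using the just-proved Lipschitz estimate to show that the approximating solutions form a Cauchy sequence in $W^{1,p}_D(\O;\Rd)$. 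Your proposal applies the cited theorem directly with a merely $L^p$ coefficient, and this final density step is missing.

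A smaller point worth making explicit: the Lipschitz bound you invoke for the fixed-coefficient map $L\mapsto u$ in the $W^{1,p}$-norm is not a direct consequence of \cite[Theorem 1.1]{herzog}, because that result concerns the pure divergence operator without the zeroth-order term. What one actually gets is
\begin{align*}
	\norm{u_1-u_2}{W^{1,p}(\O;\Rd)}
	\leq c\norm{L_1-L_2}{W^{-1,p}_D(\O;\Rd)}
	+ \frac{c}{\lambda^2}\norm{u_1-u_2}{W^{-1,p}_D(\O;\Rd)},
\end{align*}
and the last term has to be bootstrapped back to the $p=2$ Lipschitz estimate through the embeddings $H^1_D(\O;\Rd)\embed L^q(\O;\Rd)$ and $W^{1,p'}_D(\O;\Rd)\embed L^{q'}(\O;\Rd)$ --- this is precisely where the hypothesis $2-\tfrac{d}{2}\geq-\tfrac{d}{\overline p}$ is consumed. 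You gesture at this with "via the strong monotonicity", but the argument needs to be spelled out; the paper does so explicitly in its $p=2$ step and in the final chain of inequalities.
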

\begin{proof}
	Note that
	$b_\sigma(\cdot, \tau) \in L^p(\O;\Rdds)$
	holds for all
	$\tau, \sigma \in L^p(\O;\Rdds)$
	(and in fact for all
	$p \in [1, \infty]$),
	which follows from
	\cref{eq:W1r_existence_inertia_1},
	\cref{eq:W1r_existence_inertia_2}
	and
	\cref{eq:W1r_existence_inertia_4}
	(taking into account that a pointwise
	limit of measurable functions
	is also measurable,
	see
	\cite[Corollary 3.1.5]{wachsmuth}).
	
	Let us at first consider the case
	$p = 2$.
	Then the existence of a solution follows from
	the Browder-Minty theorem,
	Korn's inequality
	and
	the Poincar\'{e} inequality.
	In order to verify the inequality,
	let
	$\varphi_1, \varphi_2
	\in L^2(\O;\Rdds)$,
	$L_1, L_2 \in H^{-1}_D(\O;\Rd)$
	and
	$u_1, u_2 \in H^1_D(\O;\Rd)$
	the corresponding solutions.
	Then we obtain
	\begin{align*}
		\dualpair{L_1 - L_2}{u_1 - u_2}{}		
		&=
		\scalarproduct{b_{\varphi_1(\cdot)}(\cdot,\symnabla u_1(\cdot))
		- b_{\varphi_2(\cdot)}(\cdot,\symnabla u_2(\cdot))}
		{\symnabla (u_1 - u_2)}{L^2(\O;\Rdds)}
		\\
		&\qquad
		+
		\bignorm{\frac{u_1 - u_2}{\lambda}}{L^2(\O;\Rd)}^2
		\\
		&\geq
		m
		\norm{\symnabla (u_1 - u_2)}{L^2(\O;\Rdds)}^2
		-
		D \norm{\varphi_1 - \varphi_2}{L^2(\O;\Rdds)}^2
		\\
		&\largespace
		-
		D \int_\O
			|\symnabla (u_1 - u_2)| \
			|\varphi_1 - \varphi_2|
		+
		\frac{1}{\lambda^2}
		\norm{u_1 - u_2}{L^2(\O;\Rd)}^2.
	\end{align*}
	Using now
	\begin{align*}
		\dualpair{L_1 - L_2}{u_1 - u_2}{}
		\leq
		\norm{L_1 - L_2}{W^{-1,2}_D(\O;\Rd)}
		\norm{u_1 - u_2}{W^{1,2}(\O;\Rd)}
	\end{align*}
	and
	\begin{align*}
		D \int_\O
			|\symnabla (u_1 - u_2)| \
			|\varphi_1 - \varphi_2|
		\leq
		D
		\norm{\symnabla (u_1 - u_2)}{L^2(\O;\Rd)}
		\norm{\varphi_1 - \varphi_2}{L^2(\O;\Rd)}
	\end{align*}
	and Young's inequality,
	yields
	\begin{align*}
		\norm{u_1 - u_2}{W^{1,2}(\O;\Rd)}^2
		&\leq
		\overline{C} \Big{(}
			\norm{\varphi_1 - \varphi_2}{L^2(\O;\Rdds)}^2
			+
			\norm{L_1 - L_2}{W^{-1,2}_D(\O;\Rd)}^2
		\Big{)}
		\\
		&\leq
		\overline{C} \Big{(}
			\norm{\varphi_1 - \varphi_2}{L^2(\O;\Rdds)}
			+
			\norm{L_1 - L_2}{W^{-1,2}_D(\O;\Rd)}
		\Big{)}^2
	\end{align*}
	for a certain positive constant
	$\overline{C}$,
	hence,
	the asserted inequality is fulfilled.
	
	For the general case let now
	$\varphi \in L^\infty(\O; \Rdds)$
	and
	$L \in W^{-1,p}_D(\O;\Rd)$,
	we define
	$b : \O \times \Rdds \rightarrow \Rdds$
	by
	\begin{align*}
		b(x,\tau) \define b_{\varphi(x)}(x,\tau)
	\end{align*}
	and
	$L_u \in W^{-1,p}_D(\O;\Rd)$
	by
	\begin{align*}
		\dualpair{L_u}{v}{}
		\define
		\dualpair{L}{v}{}
		-
		\frac{1}{\lambda^2}
		\scalarproduct{u}{v}{L^2(\O;\Rd)},
	\end{align*}
	where
	$u \in H^1_D(\O;\Rd)
	\embed
	L^q(\O;\Rd)$,
	with
	$1 - \frac{d}{2} = -\frac{d}{q}$
	when
	$d > 2$,
	$q = 2$
	when
	$d = 2$
	and
	$q = \infty$
	when
	$d = 1$,
	is the solution in the case
	$p = 2$
	and
	$v \in W^{1,p'}(\O;\Rd)
	\embed
	L^{q'}(\O;\Rd)$
	(note that
	$1 - \frac{d}{p'} + \frac{d}{q'}
	= 1 + \frac{d}{p} - \frac{d}{q}
	= 2 + \frac{d}{p} - \frac{d}{2}
	\geq 0$
	when
	$d > 2$
	and
	$1 - \frac{d}{p'} + \frac{d}{q'}
	>
	0$
	otherwise).
	We can now apply
	\cite[Theorem 1.1]{herzog}
	(here we need
	$\varphi \in L^\infty(\O; \Rdds)$
	to satisfy
	\cite[(1.6a)]{herzog},
	the other requirements in
	\cite[Assumption 1.5]{herzog}
	are obviously fulfilled due to
	\crefrange{eq:W1r_existence_inertia_1}
	{eq:W1r_existence_inertia_4})
	to obtain
	$\overline{u} \in W^{1,p}_D(\O;\Rd)$
	such that
	\begin{align*}
		\scalarproduct{b(\cdot,\symnabla \overline{u})}
		{\symnabla v}{L^2(\O;\Rdds)}
		=
		\dualpair{L_u}{v}{},
	\end{align*}
	that is,
	\begin{align*}
		\scalarproduct{b_{\varphi}(\cdot,\symnabla \overline{u})}
		{\symnabla v}{L^2(\O;\Rdds)}
		+
		\frac{1}{\lambda^2}
		\scalarproduct{u}{v}{L^2(\O;\Rd)}
		=
		\dualpair{L}{v}{},
	\end{align*}
	holds for all
	$v \in W^{1,p'}_D(\O;\Rd)$,
	we get in particular
	$u = \overline{u} \in W^{1,p}_D(\O;\Rd)$
	since
	$u$
	is the unique solution of the equation above
	for all
	$v \in H^1_D(\O;\Rd)$.
	
	To prove the asserted inequality let
	$\varphi_1,\varphi_2 \in L^\infty(\O; \Rdds)$,
	$L_1,L_2 \in W^{-1,p}_D(\O;\Rd)$
	and
	$u_1,u_2 \in W^{1,p}(\O;\Rd)$
	the corresponding solutions and define
	$L_{u_1}, L_{u_2}$
	as before.
	Having a closer look at the proof of
	\cite[Theorem 1.1]{herzog},
	respectively
	\cite[Theorem 1]{Gro89},
	one can see that there exists
	a constant
	$c > 0$,
	depending only on
	$p, m$
	and
	$M$
	(thus not on
	$L_1,L_2,\varphi_1,\varphi_2$),
	such that
	\begin{align*}
		\norm{u_1 - u_2}{W^{1,p}(\O;\Rd)}
		\leq
		c \norm{A_1(u_2) - A_2(u_2) - L_{u_1} + L_{u_2}}{W^{-1,p}_D(\O;\Rd)},
	\end{align*}
	where
	$A_i : W^{1,p}(\O;\Rd)
	\rightarrow W^{-1,p}_D(\O;\Rd)$
	is defined by
	\begin{align*}
		\dualpair{A_i(v_1)}{v_2}{}
		\define 
		\scalarproduct{b_{\varphi_i}(\cdot, \symnabla v_1)}
		{\symnabla v_2}{L^2(\O;\Rdds)}
	\end{align*}
	for all
	$v_1 \in W^{1,p}(\O;\Rd)$,
	$v_2 \in W^{1,p'}(\O;\Rd)$
	and for
	$i \in \{ 1,2 \}$.
	We finally obtain
	\begin{align*}
		\norm{u_1 - u_2}{W^{1,p}(\O;\Rd)}
		&\leq
		c \Big{(}
			\norm{A_1(u_2) - A_2(u_2)}{W^{-1,p}_D(\O;\Rd)}
			+ 
			\norm{L_{u_1} - L_{u_2}}{W^{-1,p}_D(\O;\Rd)}
		\Big{)}
		\\
		&\leq
		c \Big{(}
			M \norm{\varphi_1 - \varphi_2}{L^p(\O;\Rdds)}
			+ 
			\norm{L_1 - L_2}{W^{-1,p}_D(\O;\Rd)}
			\\
			&\largespace\largespace
			+
			\frac{C}{\lambda^2}
			\norm{u_1 - u_2}{H^1(\O;\Rd)}
		\Big{)},
	\end{align*}
	where we have used again the embeddings
	$H^1(\O;\Rd)
	\embed
	L^q(\O;\Rd)$
	and
	$W^{1,p'}(\O;\Rd)
	\embed
	L^{q'}(\O;\Rd)$.
	Taking into account that
	the assertion is already proven in the case
	$p = 2$,
	we see that the desired inequality holds.
	
	One can now obtain the result for all
	$\varphi_1,\varphi_2 \in L^p(\O;\Rdds)$
	by an approximation
	(using the just proven inequality to see
	that the corresponding sequence
	$u_n$
	is a Cauchy sequence).
\end{proof}

The operator
$R_0$
in the following
proposition
will later be
the resolvent,
or a smoothed version of
the resolvent,
of
$A$
and should not be
confused with
$R$
from
\cref{def:AA_and_spaces_inertia}.

\begin{proposition}[Solution operator $\TT_{R_0}$]
\label{prop:TR_lipschitz_inertia}
	Let
	$\lambda > 0$
	and
	$p \geq 2$
	as in
	\cref{cor:W1r_existence_inertia}
	and
	$h = (h_1,h_2,h_3) \in \YY_p$.
	Moreover,
	let
	$R_0 : \Rdds \rightarrow \Rdds$
	be Lipschitz continuous and monotone.
	Then there exists a unique solution
	$u \in W^{1,p}_D(\O;\Rd)$
	of
	\begin{align}
	\label{eq:TR_def_inertia}
	\begin{split}
		-\div (
			\D \symnabla u
			+ \E R_0(\E^{\top} \symnabla (u - h_1) + h_3)
		)
		=
		\frac{h_2}{\lambda} + \frac{h_1 - u}{\lambda^2}.
	\end{split}
	\end{align}
	We denote the solution
	operator of this equation by
	$\TT_{R_0} : \YY_p \rightarrow W^{1,p}_D(\O;\Rd)$,
	that is,
	$\TT_{R_0}(h) = u$.
	Furthermore,
	$\TT_{R_0}$
	is Lipschitz continuous.
	Note that the dependency of
	$\TT_{R_0}$
	on
	$\lambda$
	and
	$p$
	will always be clear from the context.
\end{proposition}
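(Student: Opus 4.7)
My plan is to recognize \cref{eq:TR_def_inertia} as a special case of the nonlinear elliptic equation treated in \cref{cor:W1r_existence_inertia}, and to transfer both the existence statement and the Lipschitz estimate via a careful identification of the data. Concretely, I would set
\begin{align*}
	b_\sigma(x,\tau) \define \D\tau + \E R_0(\E^{\top}\tau + \sigma),
	\qquad
	\varphi \define h_3 - \E^{\top}\symnabla h_1 \in L^p(\O;\Rdds),
\end{align*}
and $L \define h_2/\lambda + h_1/\lambda^2$. With these identifications \cref{eq:TR_def_inertia} rewrites as the equation $-\div b_{\varphi}(\cdot,\symnabla u) + u/\lambda^2 = L$ considered in \cref{cor:W1r_existence_inertia}, so it suffices to verify hypotheses \crefrange{eq:W1r_existence_inertia_1}{eq:W1r_existence_inertia_4} together with $L \in W^{-1,p}_D(\O;\Rd)$.

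Conditions \cref{eq:W1r_existence_inertia_1,eq:W1r_existence_inertia_2} are immediate because $b_\sigma(\cdot,\tau)$ is spatially constant, and \cref{eq:W1r_existence_inertia_4} follows from the boundedness of $\D,\E,\E^{\top}$ combined with the Lipschitz continuity of $R_0$. The augmented monotonicity \cref{eq:W1r_existence_inertia_3} is the only genuinely nontrivial point: writing $\eta\define\E^{\top}\tau+\sigma$ and $\overline{\eta}\define\E^{\top}\overline{\tau}+\overline{\sigma}$ and expanding
\begin{align*}
	(b_\sigma(x,\tau)-b_{\overline{\sigma}}(x,\overline{\tau})):(\tau-\overline{\tau})
	=
	\D(\tau-\overline{\tau}):(\tau-\overline{\tau})
	+
	(R_0(\eta)-R_0(\overline{\eta})):\big((\eta-\overline{\eta}) - (\sigma-\overline{\sigma})\big),
\end{align*}
the coercivity of $\D$ produces the dominant term $m|\tau-\overline{\tau}|^2$ with $m$ the pointwise coercivity constant of $\D$, the monotonicity of $R_0$ renders the $(\eta-\overline{\eta})$-part of the second summand nonnegative, and its Lipschitz continuity controls the residual cross-term by $L_{R_0}(\|\E^{\top}\||\tau-\overline{\tau}|+|\sigma-\overline{\sigma}|)|\sigma-\overline{\sigma}|$, which is exactly of the shape required by \cref{eq:W1r_existence_inertia_3}. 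That $L\in W^{-1,p}_D(\O;\Rd)$ follows from the embeddings $W^{1,p}_D(\O;\Rd)\embed L^p(\O;\Rd)\embed W^{-1,p}_D(\O;\Rd)$ together with a Sobolev embedding of the form $L^2(\O;\Rd)\embed W^{-1,p}_D(\O;\Rd)$ valid in the admissible range of $p$.

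With the hypotheses verified, \cref{cor:W1r_existence_inertia} yields a unique $u\in W^{1,p}_D(\O;\Rd)$ solving \cref{eq:TR_def_inertia}, so the operator $\TT_{R_0}(h)\define u$ is well defined. For its Lipschitz continuity I would apply the inequality from \cref{cor:W1r_existence_inertia} to two data tuples $h^1,h^2\in\YY_p$ and estimate $\norm{\varphi^1-\varphi^2}{L^p(\O;\Rdds)}$ by $\norm{h_3^1-h_3^2}{L^p(\O;\Rdds)}+\|\E^{\top}\|\,\norm{h_1^1-h_1^2}{W^{1,p}(\O;\Rd)}$ and $\norm{L^1-L^2}{W^{-1,p}_D(\O;\Rd)}$ by $(1/\lambda)\norm{h_2^1-h_2^2}{L^2(\O;\Rd)}+(1/\lambda^2)\norm{h_1^1-h_1^2}{W^{1,p}(\O;\Rd)}$ via the same embeddings. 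The real obstacle is \cref{eq:W1r_existence_inertia_3}: since $R_0$ is only monotone and Lipschitz rather than strongly monotone, all coupling between $\sigma$ and $\tau$ produced by the shift $\E^{\top}\tau+\sigma$ inside $R_0$ has to be absorbed into the slack term $D(|\sigma-\overline{\sigma}|+|\tau-\overline{\tau}|)|\sigma-\overline{\sigma}|$ rather than into the coercivity constant, and this is precisely why the corollary's hypothesis was formulated with exactly that kind of slack built in.
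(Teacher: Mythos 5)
Your proof is correct and follows essentially the same route as the paper: defining $b_\sigma(x,\tau)=\D\tau+\E R_0(\E^{\top}\tau+\sigma)$, setting $\varphi=-\E^{\top}\symnabla h_1+h_3$, and invoking \cref{cor:W1r_existence_inertia} once \crefrange{eq:W1r_existence_inertia_1}{eq:W1r_existence_inertia_4} are verified, with the central step being the same decomposition of $(b_\sigma-b_{\overline\sigma}):(\tau-\overline\tau)$ into a coercive $\D$-term, a nonnegative monotonicity term for $R_0$, and a cross-term absorbed into the slack $D(|\sigma-\overline\sigma|+|\tau-\overline\tau|)|\sigma-\overline\sigma|$. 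You spell out the easy conditions and the membership $L\in W^{-1,p}_D(\O;\Rd)$ a little more explicitly than the paper does, but the argument is the same.
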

\begin{proof}
	For all
	$\sigma \in \Rdds$
	we define
	$b_\sigma : \O \times \Rdds \rightarrow \Rdds$
	by
	\begin{align*}
		b_\sigma (x,\tau)
		\define
		\D \tau
		+
		\E R_0(\E^{\top}\tau + \sigma),
	\end{align*}
	then the assertion follows from
	\cref{cor:W1r_existence_inertia}
	(with
	$\varphi \define -\E^{\top} \symnabla h_1 + h_3$
	for a given
	$h \in \YY_p$),
	let us only prove that
	\cref{eq:W1r_existence_inertia_3}
	is fulfilled,
	the other requirements can be easily checked.
	To this end let
	$\sigma, \overline{\sigma}, \tau, \overline{\tau} \in \Rdds$,
	then
	\begin{align*}
		(b_\sigma(x,\tau)
		&
		-
		b_{\overline{\sigma}}(x,\overline{\tau}))
		\colon
		(\tau - \overline{\tau})
		\\
		&\geq
		\gamma_{\D}
		| \tau - \overline{\tau} |^2
		+
		\Big{(}
			R_0(\E^{\top} \tau + \sigma)
			-
			R_0(\E^{\top} \overline{\tau}
			+
			\overline{\sigma})
		\Big{)}
		\colon
		\Big{(}
			\E^{\top} (\tau - \overline{\tau})
			+
			(\sigma - \overline{\sigma})
		\Big{)}
		\\
		&\largespace
		-
		\Big{(}
			R_0(\E^{\top} \tau + \sigma)
			-
			R_0(\E^{\top} \overline{\tau}
			+
			\overline{\sigma})
		\Big{)}
		\colon
		(\sigma - \overline{\sigma})
		\\
		&\geq
		\gamma_{\D}
		| \tau - \overline{\tau} |^2
		-
		L_{R_0} | \sigma - \overline{\sigma} |^2
		-
		L_{R_0} \norm{\E}{} \
		| \tau - \overline{\tau} | \
		| \sigma - \overline{\sigma} |
	\end{align*}
	holds,
	where
	$L_{R_0}$
	is the Lipschitz constant of
	$R_0$.
\end{proof}

Note that
$R_\lambda : \Rdds \to \Rdds$
fulfills the requirements in
\cref{prop:TR_lipschitz_inertia}
since
$R_\lambda : L^2(\O;\Rdds) \to L^2(\O;\Rdds)$
is Lipschitz continuous
and also monotone
(cf.
\cite[Proposition 55.1 (ii)
and
Proposition 55.2 (a)]{zeidler3})
and due to
\cref{eq:resolvent_pointwise_inertia}
these properties carry over to
$R_\lambda : \Rdds \to \Rdds$.

Let us also mention that
$R_0$
in
\cref{prop:TR_lipschitz_inertia} 
does not have to be monotone,
the inequality
\begin{align*}
	(R_0(a) - R_0(b)) \colon (a - b) \geq -\varepsilon |a - b|^2
\end{align*}
for
$a,b \in \Rdds$
with
$\varepsilon < \nicefrac{\gamma_{\D}}{\norm{\E^{\top}}{}^2}$
would be sufficient.

We can now prove the existence
of the resolvent of
$\AA$,
from which we can then derive
the maximal monotonicity of
$\AA$
in
\cref{prop:AA_maximal_monotone_inertia}
below.

\begin{proposition}[Existence of the resolvent of $\AA$]
\label{prop:concrete_form_of_yosida_inertia}
	For every
	$\lambda > 0$
	and
	$h = 
	(h_1,h_2,h_3) \in \HH$,
	the tuple
	\begin{align*}
		\begin{pmatrix}
			u \\ v \\ q
		\end{pmatrix}
		=
		\begin{pmatrix}
			\TT_{R_\lambda}(h)
			\\
			\frac{1}{\lambda}(\TT_{R_\lambda}(h) - h_1)
			\\
			R_\lambda(\E^{\top}\symnabla(\TT_{R_\lambda}(h) - h_1) + h_3)
		\end{pmatrix}
	\end{align*} 
	is contained in
	$D(\AA)$
	and the unique solution of
	$(u,v,q) + \lambda \AA(u,v,q) \ni h$.
\end{proposition}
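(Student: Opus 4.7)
The plan is to reduce the inclusion $(u,v,q) + \lambda \AA(u,v,q) \ni h$ to the single scalar PDE \cref{eq:TR_def_inertia} and then invoke \cref{prop:TR_lipschitz_inertia} with $R_0 = R_\lambda$. Writing out the inclusion componentwise using the definition of $\AA$ in \cref{def:AA_and_spaces_inertia} yields
\begin{align*}
u - \lambda v &= h_1, \\
v - \lambda \div(\D \symnabla u + \E q) &= h_2, \\
q + \lambda A(q) - \lambda \E^{\top} \symnabla v &\ni h_3.
\end{align*}
The first equation forces $v = \frac{1}{\lambda}(u - h_1)$. Substituting this into the third line gives $(I + \lambda A)(q) \ni \E^{\top} \symnabla(u - h_1) + h_3$, which, since $R_\lambda = (I + \lambda A)^{-1}$, is equivalent to $q = R_\lambda(\E^{\top} \symnabla(u - h_1) + h_3)$. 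Inserting both expressions back into the second equation produces exactly \cref{eq:TR_def_inertia} with $R_0 = R_\lambda$.

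Next, I would apply \cref{prop:TR_lipschitz_inertia}. As observed in the paragraph following that proposition, the pointwise resolvent $R_\lambda : \Rdds \to \Rdds$ is Lipschitz continuous and monotone, so it meets the hypotheses there. Choosing $p = 2$, we obtain a unique $u = \TT_{R_\lambda}(h) \in H^1_D(\O;\Rd)$ solving \cref{eq:TR_def_inertia}. Defining $v$ and $q$ by the formulas in the statement, the triple $(u,v,q)$ then satisfies each of the three componentwise equations by construction, hence $(u,v,q) + \lambda\AA(u,v,q) \ni h$.

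It remains to verify $(u,v,q) \in D(\AA)$. We have $u \in H^1_D(\O;\Rd)$ directly from \cref{prop:TR_lipschitz_inertia}, and $v = \frac{1}{\lambda}(u - h_1) \in H^1_D(\O;\Rd)$ because $h_1 \in H^1_D(\O;\Rd)$ by assumption on $h \in \HH$. The relation $q = R_\lambda(\E^{\top} \symnabla(u - h_1) + h_3)$ gives $q \in D(A)$, since the range of the resolvent is contained in $D(A)$. Finally, the required integrability $\div(\D \symnabla u + \E q) \in L^2(\O;\Rd)$ is read off from \cref{eq:TR_def_inertia} itself: its right-hand side $\frac{h_2}{\lambda} + \frac{h_1 - u}{\lambda^2}$ lies in $L^2(\O;\Rd)$.

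Uniqueness of the solution to the inclusion follows because the above reduction is an equivalence: any triple $(u,v,q)$ solving the inclusion forces $v$ and $q$ to be given by the stated formulas in $u$, and $u$ must then solve \cref{eq:TR_def_inertia}, which has a unique solution by \cref{prop:TR_lipschitz_inertia}. There is no serious obstacle in this proof; the substance of the argument lies in the preceding \cref{prop:TR_lipschitz_inertia}, which already did the heavy PDE-analytic work via the nonlinear elasticity result \cref{cor:W1r_existence_inertia}. The only point requiring a moment of care is identifying the pointwise and $L^2$-valued versions of $R_\lambda$, which is legitimized by \cref{eq:resolvent_pointwise_inertia}.
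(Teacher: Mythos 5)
Your proof is correct and follows essentially the same route as the paper: reduce the inclusion $(u,v,q)+\lambda\AA(u,v,q)\ni h$ componentwise to the scalar PDE \cref{eq:TR_def_inertia} with $R_0 = R_\lambda$, invoke \cref{prop:TR_lipschitz_inertia}, and read off membership in $D(\AA)$ and uniqueness. You spell out the three component equations and the substitutions more explicitly than the paper does, but the underlying argument is identical.
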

\begin{proof}
	Using the definition of
	$\TT_{R_\lambda}$
	we get
	\begin{align*}
		-\lambda \div(\D \symnabla u + \E q)
		=
		h_2 - v,
	\end{align*}
	which is the second row in
	$(u,v,q) + \lambda \AA(u,v,q) \ni h$
	and we also get
	$(u,v,q) \in D(\AA)$
	(note that
	$rg(R_\lambda) \subset D(A)$).
	That the first and last row in
	$(u,v,q) + \lambda \AA(u,v,q) \ni h$
	is also fulfilled follows immediately from
	the definitions of
	$u$,
	$v$
	and
	$q$.
	
	Furthermore,
	when
	$(u, v, q)$
	is a solution of
	$(u,v,q) + \lambda \AA(u,v,q) \ni h$,
	then one verifies analog that
	$(u,v,q)$
	must have the claimed form,
	therefore the uniqueness follows
	from the uniqueness of a solution to
	\cref{eq:TR_def_inertia}.
\end{proof}

\begin{lemma}[Monotonicity of $\AA$]
\label{lem:AA_to_A_in_scalarproduct_inertia}
	The equation
	\begin{align*}
		&\scalarproduct
		{\AA(u_1,v_1,q_1) - \AA(u_2,v_2,q_2)}
		{(u_1,v_1,q_1) - (u_2,v_2,q_2)}{\HH}
		\\
		&\qquad=
		\scalarproduct{A(q_1) - A(q_2)}{q_1 - q_2}{L^2(\O;\Rdds)}
	\end{align*}
	holds for all
	$(u_1,v_1,q_1), (u_2,v_2,q_2) \in D(\AA)$.
\end{lemma}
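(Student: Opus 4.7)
The plan is a direct computation. First I would expand the left-hand side componentwise using the definition of $\AA$ and the definition of the inner product on $\HH$. Abbreviating $\Delta u := u_1 - u_2$, $\Delta v := v_1 - v_2$, $\Delta q := q_1 - q_2$, and picking arbitrary selections from $A(q_1)$ and $A(q_2)$, the difference $\AA(u_1,v_1,q_1) - \AA(u_2,v_2,q_2)$ is the triple
\begin{align*}
    \bigl(-\Delta v,\ -\div(\D\symnabla \Delta u + \E \Delta q),\ A(q_1)-A(q_2) - \E^{\top}\symnabla \Delta v \bigr).
\end{align*}
Applying $\scalarproduct{\cdot}{\cdot}{\HH}$ against $(\Delta u, \Delta v, \Delta q)$ then produces three contributions corresponding to the three components.

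Next I would evaluate the three contributions. The first gives $-\scalarproduct{\D \symnabla \Delta v}{\symnabla \Delta u}{L^2(\O;\Rdds)}$. For the second, the definition of $D(\AA)$ ensures $\div(\D\symnabla \Delta u + \E \Delta q) \in L^2(\O;\Rd)$ and $\Delta v \in H^1_D(\O;\Rd)$, so the adjoint identity $-\div = (\symnabla)^*$ from the standing assumptions yields
\begin{align*}
    \scalarproduct{-\div(\D\symnabla \Delta u + \E \Delta q)}{\Delta v}{L^2(\O;\Rd)}
    =
    \scalarproduct{\D\symnabla \Delta u}{\symnabla \Delta v}{L^2(\O;\Rdds)}
    +
    \scalarproduct{\E \Delta q}{\symnabla \Delta v}{L^2(\O;\Rdds)}.
\end{align*}
The third contribution splits as
$\scalarproduct{A(q_1)-A(q_2)}{\Delta q}{L^2(\O;\Rdds)} - \scalarproduct{\symnabla \Delta v}{\E \Delta q}{L^2(\O;\Rdds)}$
after moving $\E^{\top}$ onto the other factor via its adjoint definition.

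Finally I would exploit the symmetry of $\D$ to cancel $-\scalarproduct{\D \symnabla \Delta v}{\symnabla \Delta u}{L^2}$ against $\scalarproduct{\D\symnabla \Delta u}{\symnabla \Delta v}{L^2}$, and observe that the two $\E$-terms also cancel. What remains is exactly $\scalarproduct{A(q_1)-A(q_2)}{q_1-q_2}{L^2(\O;\Rdds)}$, which is the claimed identity.

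I do not expect any real obstacle: the only subtle point is that integration by parts in the second component requires precisely the regularity built into $D(\AA)$ (namely $\Delta v$ in $H^1_D(\O;\Rd)$ and $\div(\D\symnabla \Delta u + \E \Delta q)$ in $L^2(\O;\Rd)$), and this is exactly why the domain is defined the way it is. Once this is noted, the cancellations are a matter of symmetry of $\D$ and the defining property of the adjoint $\E^{\top}$, and the identity drops out.
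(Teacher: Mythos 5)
Your proof is correct and follows essentially the same route as the paper: expand the $\HH$ inner product using the definition of $\AA$, integrate by parts via $-\div=(\symnabla)^*$, move $\E^{\top}$ to the other factor, and cancel using the symmetry of $\D$. The only cosmetic difference is that you form the differences $\Delta u,\Delta v,\Delta q$ up front (exploiting the affine structure of the first two components of $\AA$ and taking arbitrary selections from $A(q_i)$), whereas the paper expands $\scalarproduct{\AA(u_1,v_1,q_1)}{\cdot}{\HH}$ alone and subtracts at the end; the cancellations and justifications are identical.
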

\begin{proof}
	Using the definition of
	$\AA$
	and the scalar product in
	$\HH$
	we obtain
	\begin{align*}
		&\scalarproduct
		{\AA(u_1,v_1,q_1)}
		{(u_1,v_1,q_1) - (u_2,v_2,q_2)}{\HH}
		\\
		&\largespace
		=
		-\scalarproduct{\D\symnabla v_1}{\symnabla (u_1 - u_2)}{L^2(\O;\Rdds)}
		-
		\scalarproduct{\div(\D\symnabla u_1 + \E q_1)}
		{v_1 - v_2}{L^2(\O;\Rd)} \\
		&\largespace\largespace\largespace
		+
		\scalarproduct
		{A(q_1) - \E^{\top}}{\symnabla v_1}
		{q_1 - q_2}
		{L^2(\O;\Rdds)}
		\\
		&\largespace
		=
		\scalarproduct{\D\symnabla v_1}{\symnabla u_2}{L^2(\O;\Rdds)}
		-
		\scalarproduct{\D\symnabla u_1}{\symnabla v_2}{L^2(\O;\Rdds)}
		-
		\scalarproduct{\E^{\top}\symnabla v_2}{q_1}{L^2(\O;\Rdds)}
		\\
		&\largespace\largespace\largespace
		+
		\scalarproduct{\E^{\top}\symnabla v_1}{q_2}{L^2(\O;\Rdds)}
		+
		\scalarproduct{A(q_1)}{q_1 - q_2}{L^2(\O;\Rdds)},
	\end{align*}
	evaluating now
	$\scalarproduct
	{\AA(u_2,v_2,q_2)}
	{(u_1,v_1,q_1) - (u_2,v_2,q_2)}{\HH}$
	and taking the difference yields
	the assertion.
\end{proof}

\begin{proposition}[$\AA$ is maximal monotone]
\label{prop:AA_maximal_monotone_inertia}
	The operator
	$\AA : \HH \rightarrow 2^\HH$
	is maximal monotone.
\end{proposition}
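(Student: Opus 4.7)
The plan is to combine the two preceding results: Lemma~\ref{lem:AA_to_A_in_scalarproduct_inertia} gives monotonicity essentially for free, and Proposition~\ref{prop:concrete_form_of_yosida_inertia} provides surjectivity of $I + \lambda \AA$, which by Minty's theorem is equivalent to maximal monotonicity once monotonicity is known.

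First I would verify monotonicity. By Lemma~\ref{lem:AA_to_A_in_scalarproduct_inertia}, for any $(u_i,v_i,q_i) \in D(\AA)$ and any selections $\xi_i \in \AA(u_i,v_i,q_i)$ (which amounts to selecting $\eta_i \in A(q_i)$ in the third component), we obtain
\begin{align*}
    \scalarproduct{\xi_1 - \xi_2}{(u_1,v_1,q_1) - (u_2,v_2,q_2)}{\HH}
    =
    \scalarproduct{\eta_1 - \eta_2}{q_1 - q_2}{L^2(\O;\Rdds)}
    \geq 0,
\end{align*}
where the inequality uses that $A$ is maximal monotone (in particular monotone) on $L^2(\O;\Rdds)$. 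Hence $\AA$ is monotone.

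Next I would invoke Minty's characterization (cf.\ \cite[Proposition~55.2]{zeidler3} or \cite[Ch.~32]{zeidler2a}): a monotone operator on a Hilbert space is maximal monotone if and only if $\operatorname{rg}(I + \lambda \AA) = \HH$ for some (equivalently every) $\lambda > 0$. Proposition~\ref{prop:concrete_form_of_yosida_inertia} shows exactly this: for every $\lambda > 0$ and every $h \in \HH$, the equation $(u,v,q) + \lambda \AA(u,v,q) \ni h$ admits a solution (in fact a unique one), given explicitly via the operator $\TT_{R_\lambda}$ and the pointwise resolvent $R_\lambda$ of $A$.

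Therefore $\AA$ is both monotone and its range equation is solvable for every right-hand side, which yields maximal monotonicity. There is no substantial obstacle here; all the work has been done in Proposition~\ref{prop:concrete_form_of_yosida_inertia} (the construction of the resolvent via the elliptic problem~\eqref{eq:TR_def_inertia} and the Lipschitz/monotonicity transfer from $R_\lambda$ on $L^2$ to $R_\lambda$ on $\Rdds$) and in Lemma~\ref{lem:AA_to_A_in_scalarproduct_inertia}, so the proof reduces to citing these two results together with Minty.
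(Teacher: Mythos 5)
Your proposal is correct and follows exactly the same route as the paper: monotonicity from \cref{lem:AA_to_A_in_scalarproduct_inertia} together with the monotonicity of $A$, and maximality via Minty's surjectivity criterion using \cref{prop:concrete_form_of_yosida_inertia} (the paper fixes $\lambda = 1$, but this is immaterial). No gap.
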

\begin{proof}
	The monotonicity of
	$\AA$
	follows immediately from
	\cref{lem:AA_to_A_in_scalarproduct_inertia}
	and the monotonicity of
	$A$.
	
	To prove that
	$\AA$
	is maximal monotone,
	it is, 
	according to
	\cite[Proposition 55.1 (B)]{zeidler3},
	sufficient that
	$R(I + \AA) = \HH$,
	that is,
	we have to show that for every
	$(h_1,h_2,h_3) \in \HH$
	there exists
	$(u,v,q) \in D(\AA)$
	such that
	$(u,v,q) + \AA(u,v,q) \ni (h_1,h_2,h_3)$.
	This follows from
	\cref{prop:concrete_form_of_yosida_inertia}
	with
	$\lambda = 1$.
\end{proof}

In what follows it is convenient to
give the integration operator a name.

\begin{definition}[Integration operator]
\label{def:integration_operator_inertia}
	We define
	$\mathcal{F} : H^1(L^2(\O;\Rd))
	\rightarrow	
	H^2(L^2(\O;\Rd))$
	by
	$(\FF f)(t)
	\define \int_0^t f(s) ds$
	for all
	$f \in H^1(L^2(\O;\Rd))$.
	Moreover,
	we abbreviate
	$\FF_\rho \define \nicefrac{\FF}{\rho}$.
	As usual,
	we denote the operators
	with different inverse images
	and ranges with the same symbol,
	for instance
	$\mathcal{F} : L^2(L^2(\O;\Rd))
	\rightarrow H^1(L^2(\O;\Rd))$.
\end{definition}

\begin{theorem}[Existence of a solution to the state equation]
\label{thm:existence_of_a_solution_to_the_state_equation_inertia}
	There exists a unique solution
	$(u,v,q) \in H^1(\HH)$
	of
	\cref{eq:state_equation_as_EVI_inertia}.
	Moreover,
	the inequality
	\begin{align*}
		\norm{(\boverdot{u}, \boverdot{v}, \boverdot{q})}{L^2(\HH)}
		\leq C (1 + \norm{f}{H^1(L^2(\O;\Rd))})
	\end{align*}
	holds, where the constant $C$ does not depend on $f$.
\end{theorem}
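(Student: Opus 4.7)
The plan is to apply \cite[Theorem 55.A]{zeidler3} to the EVI \cref{eq:state_equation_as_EVI_inertia} and then to derive the a priori bound via a standard difference-quotient argument. Since that existence theorem is formulated for evolutions of the form $\boverdot{w} + B w \ni g$ with $B$ maximal monotone in a Hilbert space, I would first absorb the linear factor $Q^{-1}$ by endowing $\HH$ with the equivalent inner product
\begin{equation*}
\scalarproduct{h_1}{h_2}{\HH_{Q^{-1}}} \define \scalarproduct{Q^{-1}h_1}{h_2}{\HH},
\end{equation*}
which is well defined because $Q^{-1}$ is block-diagonal with symmetric, bounded and coercive components $I$, $\rho I$ and $(\C+\B)^{-1}$. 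Multiplying the EVI by $Q$ puts it into the standard form $\boverdot{w} + Q\AA w \ni QRf$. In $\HH_{Q^{-1}}$ the operator $Q\AA$ is monotone via
\begin{equation*}
\scalarproduct{Q\AA w_1 - Q\AA w_2}{w_1-w_2}{\HH_{Q^{-1}}} = \scalarproduct{\AA w_1 - \AA w_2}{w_1-w_2}{\HH} \geq 0
\end{equation*}
(cf.\ \cref{lem:AA_to_A_in_scalarproduct_inertia}), while its maximality reduces to the surjectivity of $I + Q\AA$ on $\HH$, which follows either from \cref{prop:AA_maximal_monotone_inertia} combined with the standard perturbation result for a maximal monotone operator by a bounded, symmetric, coercive linear operator, or by repeating the explicit construction in \cref{prop:concrete_form_of_yosida_inertia} with the obvious scaling induced by $Q$. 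Since $(u_0,v_0,q_0) \in D(\AA) = D(Q\AA)$ by the standing assumption and $QRf \in H^1(\HH) \hookrightarrow W^{1,1}(\HH)$, \cite[Theorem 55.A]{zeidler3} yields a unique $w = (u,v,q) \in W^{1,\infty}([0,T]; \HH) \subset H^1(\HH)$ satisfying the EVI and the initial condition.

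For the a priori estimate I would use the classical difference-quotient technique. Evaluating the EVI at two times $t_1, t_2 \in [0,T]$, subtracting, pairing with $w(t_1) - w(t_2)$ in $\HH$, and dropping the nonnegative monotonicity term supplied by \cref{lem:AA_to_A_in_scalarproduct_inertia} yields
\begin{equation*}
\scalarproduct{Q^{-1}(\boverdot w(t_1) - \boverdot w(t_2))}{w(t_1) - w(t_2)}{\HH} \leq \scalarproduct{R(f(t_1) - f(t_2))}{w(t_1) - w(t_2)}{\HH}.
\end{equation*}
Dividing by $(t_1-t_2)^2$, passing to the limit $t_1 \to t_2$ (legal by the Lipschitz-in-time regularity $w \in W^{1,\infty}(\HH)$ supplied by Zeidler's theorem) and using the symmetry of $Q^{-1}$ leads to
\begin{equation*}
\tfrac{1}{2}\tfrac{d}{dt}\scalarproduct{Q^{-1}\boverdot w(t)}{\boverdot w(t)}{\HH} \leq \scalarproduct{R\boverdot f(t)}{\boverdot w(t)}{\HH},
\end{equation*}
whose integration, combined with Young's inequality, the coercivity of $Q^{-1}$ and Gronwall's lemma, furnishes the $L^2(\HH)$-bound on $\boverdot{w}$ in terms of $\norm{\boverdot w(0)}{\HH}$ and $\norm{\boverdot f}{L^2(L^2(\O;\Rd))}$. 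The initial velocity is itself controlled by evaluating the EVI at $t=0$: since $(u_0,v_0,q_0) \in D(\AA)$ there exists $a_0 \in \AA(u_0,v_0,q_0)$ with $\boverdot w(0) = Q(Rf(0) - a_0)$, so that $\norm{\boverdot w(0)}{\HH} \leq C(1 + \norm{f(0)}{L^2(\O;\Rd)}) \leq C(1 + \norm{f}{H^1(L^2(\O;\Rd))})$ via the embedding $H^1([0,T];L^2(\O;\Rd)) \hookrightarrow C([0,T];L^2(\O;\Rd))$.

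The main obstacle is the maximality of $Q\AA$ in the new inner product, since the block structure of $Q$ couples the three equations in a nontrivial way and one cannot inherit this maximality from \cref{prop:AA_maximal_monotone_inertia} for free; however, once the explicit resolvent formula of \cref{prop:concrete_form_of_yosida_inertia} is available, the required surjectivity is obtained by an almost verbatim adaptation, so this step becomes essentially routine. The subsequent Gronwall argument is classical, with the only delicate point being the justification of differentiating the inequality produced by the difference quotient, which is supplied by the Lipschitz regularity of $w$.
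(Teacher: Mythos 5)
Your existence argument matches the paper's: multiply by $Q$ to pass to the form $\boverdot{w} + Q\AA(w) \ni QRf$, verify that $Q\AA$ is maximal monotone in $\HH_{Q^{-1}}$ (monotonicity via \cref{lem:AA_to_A_in_scalarproduct_inertia}, maximality either by the linear perturbation theorem or by adapting the resolvent construction), then apply \cite[Theorem~55.A]{zeidler3}. The paper compresses the maximality step into ``one easily verifies,'' so your attention to it is reasonable.

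For the a priori bound you take a genuinely different route. The paper does not differentiate the solution $w$ directly; instead it reads off the estimate from the Yosida-regularized solutions $w_\lambda$ that appear inside the proof of \cite[Theorem~55.A]{zeidler3}, for which $\tilde\AA_\lambda$ is Lipschitz and hence $w_\lambda$ is smooth enough for the calculation, and then passes to the weak limit $w_\lambda \rightharpoonup w$ in $H^1(\HH)$. Your direct difference-quotient/Gronwall argument on $w$ itself aims for the same conclusion, and the integrated version of it does go through, but as written there is a gap: from the difference-quotient inequality you pass to
\begin{align*}
\tfrac{1}{2}\tfrac{d}{dt}\scalarproduct{Q^{-1}\boverdot w(t)}{\boverdot w(t)}{\HH} \leq \scalarproduct{R\boverdot f(t)}{\boverdot w(t)}{\HH},
\end{align*}
and you justify this by ``the Lipschitz-in-time regularity $w \in W^{1,\infty}(\HH)$.'' That regularity gives only $\boverdot w \in L^\infty(\HH)$; it does not make $\boverdot w$ absolutely continuous, so the displayed derivative need not exist as a function, and the pointwise limit of the difference quotient of $\boverdot w$ that you implicitly invoke need not exist either. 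The correct order of operations is to differentiate $t \mapsto \tfrac12\norm{w(t+h)-w(t)}{\HH_{Q^{-1}}}^2$ (which \emph{is} Lipschitz), integrate the resulting inequality from $0$ to $t$, divide by $h^2$, and only then let $h\to 0$ using Fatou/weak convergence for the integral term; this yields the desired $L^\infty$ (hence $L^2$) bound on $\boverdot w$ without ever differentiating $\boverdot w$. Alternatively, perform the calculation at the $w_\lambda$ level as the paper does, which eliminates the issue from the start. A smaller point: your bound on $\boverdot w(0)$ silently uses that the solution of a maximal monotone EVI has a right derivative at $0$ equal to $QRf(0) - Q(\AA(w_0)-QRf(0))^0$, where $(\cdot)^0$ is the minimal section; only this choice of $a_0$ makes $\norm{\boverdot w(0)}{\HH}$ controllable by $\norm{f(0)}{}$ and (fixed) data. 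Worth making this explicit, since a generic $a_0 \in \AA(w_0)$ gives no bound at all.
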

\begin{proof}
	The tuple
	$(u,v,q)$
	is a solution of
	\cref{eq:state_equation_as_EVI_inertia}
	if and only if
	$w := (u,v,q)$
	solves
	\begin{align*}
		\boverdot{w}
		+
		\tilde{\AA}(w)
		\ni
		\frac{1}{\rho}Rf,
		\mediumspace
		w(0) = (u_0,v_0,q_0),
	\end{align*}
	with
	$\tilde{\AA} := Q\AA$.
	One easily verifies that
	$\tilde{\AA}$
	is a maximal monotone operator with respect to
	$\HH_{Q^{-1}}$
	(that is,
	the space
	$\HH$
	equipped with the scalar product
	$\scalarproduct{Q^{-1}\cdot}{\cdot}{\HH_{Q^{-1}}}$),
	thus we can apply
	\cite[Theorem 55.A]{zeidler3}
	to obtain a solution
	$w \in H^1(\HH)$.
	Moreover,
	as can be seen in the proof of
	\cite[Theorem 55.A]{zeidler3}
	we get
	\begin{align*}
		\sqrt{\gamma_{Q^{-1}}}
		\norm{\boverdot{w}_\lambda}{C(\HH)}
		\leq
		\norm{\boverdot{w}_\lambda}{C(\HH_{Q^{-1}})}
		\leq
		C (1 + \frac{1}{\rho}\norm{Rf}{H^1(\HH)}))
		=
		C (1 + \frac{1}{\rho}\norm{f}{H^1(L^2(\Omega;\Rd))})
	\end{align*}
	for all
	$\lambda > 0$,
	where
	$w_\lambda$
	is the solution of
	\begin{align*}
		\boverdot{w}_\lambda
		+
		\tilde{\AA}_\lambda(w_\lambda)
		=
		\frac{1}{\rho}Rf,
		\mediumspace w_\lambda(0)
		=
		(u_0,v_0,q_0).
	\end{align*}
	Since
	$w_\lambda \rightharpoonup w$
	in
	$H^1(\HH)$,
	we obtain the desired inequality.
\end{proof}

\begin{remark}[$\AA$ is not a subdifferential]
\label{rem:AA_is_not_a_subdifferential_inertia}
	Let us show that
	the maximal monotone operator
	$\AA : \HH \rightarrow \HH$
	is not a subdifferential,
	that is,
	there exists no proper,
	convex and lower semicontinuous function
	$\Phi : \HH \rightarrow (-\infty, \infty]$
	such that
	\begin{align*}
		\AA(u,v,q)
		&=
		\partial\Phi(u,v,q)
		=
		\{
			(c, d, e) \in \HH
			:
			\Phi(\hat{u}, \hat{v}, \hat{q})
			\geq
			\Phi(u,v,q)
			\\
			&\largespace\largespace\largespace\largespace
			+
			\scalarproduct{(c, d, e)}{(\hat{u} - u, \hat{v} - v, \hat{q} - q)}{\HH} \
			\forall (\hat{u}, \hat{v}, \hat{q})
			\in \HH
		\}
	\end{align*}
	holds for all
	$(u,v,q) \in \HH$.
	In fact,
	there exists even not any function
	$\Phi : \HH \rightarrow (-\infty, \infty]$
	such that the equation above holds,
	which can be seen as follows:
	
	Let us assume that such a
	$\Phi$
	exists and recall that
	$(u_0,v_0,q_0) \in D(\AA)$.
	Then,
	using
	\cref{lem:AA_to_A_in_scalarproduct_inertia}
	with
	$(u_1,v_1,q_1) = (u + u_0,v,q_0)$
	and
	$(u_2,v_2,q_2) = (u_0,0,q_0)$,
	\begin{align*}
		\Phi(u_0,0,q_0)
		&\geq
		\Phi(u + u_0,v,q_0)
		-
		\scalarproduct{\AA(u + u_0,v,q_0)}{(u,v,0)}{\HH}
		\\
		&=
		\Phi(u + u_0,v,q_0)
		-
		\scalarproduct{\AA(u_0,0,q_0)}{(u,v,0)}{\HH}
		\\
		&\geq
		\Phi(u_0,0,q_0)
		+
		\scalarproduct{\AA(u_0,0,q_0)}{(u,v,0)}{\HH}
		-
		\scalarproduct{\AA(u_0,0,q_0)}{(u,v,0)}{\HH}
		\\
		&=
		\Phi(u_0,0,q_0)
	\end{align*}
	holds for all
	$(u,v)$
	such that
	$(u + u_0,v,q_0) \in D(\AA)$,
	hence,
	\begin{align*}
		\scalarproduct
		{\AA(u_0, 0, q_0)}
		{(\hat{u} - u, \hat{v} - v, 0)}
		{\HH}
		&=
		\Phi(\hat{u} + u_0, \hat{v}, q_0)
		-
		\Phi(u + u_0, v, q_0)
		\\
		&\geq
		\scalarproduct
		{\AA(u + u_0,v,q_0)}
		{(\hat{u} - u,\hat{v} - v,0)}
		{\HH}
	\end{align*}
	which gives
	\begin{align*}
		0
		\geq
		\scalarproduct
		{\D\symnabla u}
		{\symnabla \hat{v}}
		{\HH}
		-
		\scalarproduct
		{\D\symnabla \hat{u}}
		{\symnabla v}
		{\HH}
	\end{align*}
	for all
	$(\hat{u},\hat{v}), (u,v)$
	such that
	$(u + u_0,v,q_0),
	(\hat{u} + u_0,\hat{v},q_0)
	\in D(\AA)$.
	Choosing now an arbitrary
	$u \in C_c^\infty(\O;\Rd)$,
	$u \neq 0$,
	$\hat{v} = u$
	and
	$\hat{u} = v = 0$,
	we obtain the desired contradiction.
\end{remark}

In light of
\cref{rem:AA_is_not_a_subdifferential_inertia},
the case of plasticity with inertia essentially
differs from the EVI analyzed in
\cite{paper_abstract}
in two aspects.
First,
we have more regularity in time
as explained after
\cref{lem:transformation_into_an_EVI_inertia}.
Second,
we lose a certain boundedness of the
maximal monotone operator,
which was assumed in
\cite[Sect. 2]{paper_abstract},
and it is not a subdifferential.

It is also to be noted that
\cref{rem:AA_is_not_a_subdifferential_inertia}
is independent of the operator
$A$.

\subsection{Regularization and Convergence Results}
\label{sec:inertia_se_racr}

As already pointed out earlier,
\cref{eq:state_equation_as_EVI_inertia}
can be transformed into the EVI which was
analyzed in
\cite{paper_abstract},
namely
\begin{align}
\label{eq:state_equation_as_EVI_inertiap}
	\boverdot{p} \in \AA(R\FF_\rho f - Qp), \mediumspace p(0) = p_0,
\end{align}
where we set $p_0 := -Q^{-1}(u_0, v_0, q_0)$.
We can observe that when $(u,v,q)$ is
a solution of \cref{eq:state_equation_as_EVI_inertia}, then $p := R\FF f - Q^{-1}(u,v,q)$ is a solution of
\cref{eq:state_equation_as_EVI_inertiap} and when $p$ is a solution of \cref{eq:state_equation_as_EVI_inertiap},
then $(u,v,q) := R \FF_\rho f - Qp$ is a solution of \cref{eq:state_equation_as_EVI_inertia}.

Thanks to this transformation we can use several results from \cite{paper_abstract}
(namely Lemma 3.7, Lemma 3.8 and Proposition 3.5), with which
the derivation of convergence results will be
an easy task. Let us emphasize that the maximal monotone operator therein was assumed to
have a closed domain and that
\begin{equation*}
	A^0 : D(A) \rightarrow \HH, \quad 
	h \mapsto \argmin_{v \in A(h)} \norm{v}{\HH}
\end{equation*}
is bounded on bounded sets. Clearly, both assumptions are not fulfilled for $\AA$, however, they were only
needed to prove \cite[Theorem 3.3]{paper_abstract}, which can be replaced by \cref{thm:existence_of_a_solution_to_the_state_equation_inertia}, cf. also
\cite[Remark 3.13]{paper_abstract}. Therefore we can still apply the above mentioned results.

\begin{theorem}[Weak convergence of the state]
\label{thm:weak_convergence_of_the_state_inertia}
	Let $f \in H^1(L^2(\O;\Rd))$,
	$\sequence{f}{n} \subset H^1(L^2(\O;\Rd))$
	such that $f_n \rightharpoonup f$ in $H^1(L^2(\O;\Rd))$ and $\FF f_n \rightarrow \FF F$
	in $L^1(L^2(\O;\Rd))$. Moreover, let $(u,v,q) \in H^1(\HH)$
	be the solution of \eqref{eq:state_equation_as_EVI_inertia} and $(u_n,v_n,q_n) \in H^1(\HH)$,
	for every $n \in \mathbb{N}$, either the solution of
	\begin{align*}
		Q^{-1}(\boverdot{u}_n,\boverdot{v}_n, \boverdot{q}_n) + \AA(u_n,v_n,q_n) \ni Rf_n,
		\mediumspace (u_n,v_n,q_n)(0) = (u_0,v_0,q_0)
	\end{align*}
	or
	\begin{align*}
		Q^{-1}(\boverdot{u}_n,\boverdot{v}_n, \boverdot{q}_n) + \AA_{\lambda_n}(u_n,v_n,q_n) = Rf_n,
		\mediumspace (u_n,v_n,q_n)(0) = (u_0,v_0,q_0),
	\end{align*}
	where $\sequence{\lambda}{n} \subset (0,\infty)$, $\lambda_n \searrow 0$.
	
	Then $(u_n,v_n,q_n) \rightharpoonup (u,v,q)$ in $H^1(\HH)$ and
	$(u_n,v_n,q_n) \rightarrow (u,v,q)$ in $C(H^1(\Omega;\Rd)) \times L^1(L^2(\O;\Rd)) \times C(L^2(\O;\Rdds))$.
	If additionally $\FF f_n \rightarrow \FF f$ in $C(L^2(\O;\Rdds))$, then
	$v_n \rightarrow v$ in $C(L^2(\O;\Rd))$.
\end{theorem}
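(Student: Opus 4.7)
The plan is to reduce the problem to convergence results already established in \cite{paper_abstract} via the equivalence between \cref{eq:state_equation_as_EVI_inertia} and \cref{eq:state_equation_as_EVI_inertiap} described immediately before the theorem. Concretely, I would introduce $p := R\FF f - Q^{-1}(u,v,q)$ and, for each $n$, $p_n := R\FF f_n - Q^{-1}(u_n,v_n,q_n)$, so that $p$ solves \cref{eq:state_equation_as_EVI_inertiap} driven by $R\FF_\rho f$ and $p_n$ solves the analogous (either exact or Yosida-regularized) EVI driven by $R\FF_\rho f_n$, both with the same initial value $p_0 = -Q^{-1}(u_0,v_0,q_0)$.

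Next, the hypotheses on $\sequence{f}{n}$ translate directly into what is required in \cite[Lemma~3.7, Lemma~3.8, Proposition~3.5]{paper_abstract}: from $f_n \weak f$ in $H^1(L^2(\O;\Rd))$ one obtains $R\FF_\rho f_n \weak R\FF_\rho f$ in $H^1(\HH)$ by continuity of $R$ and $\FF_\rho$, and from $\FF f_n \to \FF f$ in $L^1(L^2(\O;\Rd))$ one gets $R\FF_\rho f_n \to R\FF_\rho f$ in $L^1(\HH)$. Invoking the cited results then yields $p_n \weak p$ in $H^1(\HH)$ and $p_n \to p$ strongly in $C(\HH)$, covering both the unregularized and the Yosida-regularized case simultaneously.

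Inverting the transformation via $(u_n,v_n,q_n) = R\FF_\rho f_n - Qp_n$, the weak convergence $(u_n,v_n,q_n) \weak (u,v,q)$ in $H^1(\HH)$ follows from the linearity and continuity of $Q$. Since $Q = (I, \nicefrac{1}{\rho}I, \C+\B)$ acts blockwise, the first component becomes $u_n = -p_{n,1}$, which converges in $C(H^1_D(\O;\Rd))$, and the third component $q_n = -(\C+\B)p_{n,3}$ converges in $C(L^2(\O;\Rdds))$. The second component reads $v_n = \FF_\rho f_n - \nicefrac{1}{\rho} p_{n,2}$, whose convergence in $L^1(L^2(\O;\Rd))$ is exactly what the $L^1$-assumption on $\FF f_n$ furnishes, while the stronger hypothesis $\FF f_n \to \FF f$ in $C(L^2(\O;\Rd))$ upgrades this to convergence in $C(L^2(\O;\Rd))$.

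The main subtlety is justifying the applicability of the results from \cite{paper_abstract} to our $\AA$, which violates both the closedness of the domain and the boundedness of $\AA^0$ on bounded sets that are assumed there. As already pointed out just before the theorem, those hypotheses are only invoked in order to establish \cite[Theorem~3.3]{paper_abstract}, a role now played by \cref{thm:existence_of_a_solution_to_the_state_equation_inertia}. The proofs of the three cited convergence results themselves rely only on monotonicity, resolvent identities and uniform a~priori bounds (the latter being available from \cref{thm:existence_of_a_solution_to_the_state_equation_inertia}), so once the existence substitution is verified they carry over verbatim, concluding the argument.
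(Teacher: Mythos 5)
Your proof is correct and follows the same route as the paper: transform to the first-order EVI \cref{eq:state_equation_as_EVI_inertiap}, use the a priori bound from \cref{thm:existence_of_a_solution_to_the_state_equation_inertia}, invoke \cite[Lemma~3.7, Lemma~3.8]{paper_abstract}, and then unpack the componentwise convergence of $(u_n,v_n,q_n) = R\FF_\rho f_n - Qp_n$, observing that the range of $R$ lives only in the second component so that $v_n$ inherits only the regularity of $\FF f_n$. The paper's own proof records the same justification for applying the cited lemmas despite $\AA$ failing the closedness and boundedness assumptions — this is stated in the discussion immediately preceding the theorem rather than in the proof body — so your inclusion of that point is simply a relocation of the same remark.
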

\begin{proof}
	The function
	$p := R\FF f - Q^{-1}(u,v,q) \in H^1(\HH)$ is the unique solution
	of \eqref{eq:state_equation_as_EVI_inertiap} and
	$p_n := R\FF f_n - Q^{-1}(u_n,v_n,q_n) \in H^1(\HH)$
	either the unique solution of
	\begin{align*}
		\boverdot{p}_n \in \AA(R\FF_\rho f_n - Qp_n), \mediumspace p_n(0) = p_0.
	\end{align*}
	or
	\begin{align*}
		\boverdot{p}_n = \AA_{\lambda_n}(R\FF_\rho f_n - Qp_n), \mediumspace p_n(0) = p_0.
	\end{align*}
	Thanks to \cref{thm:existence_of_a_solution_to_the_state_equation_inertia}, $(\boverdot{u}_n, \boverdot{v}_n, \boverdot{q}_n)$
	is bounded in $L^2(\HH)$.
	We can now apply \cite[Lemma 3.7]{paper_abstract} with $A_n = \AA$
	and $A_n = \AA_{\lambda_n}$ (note that we can choose $A_n = \AA_{\lambda_n}$
	according to \cite[Lemma 3.8]{paper_abstract}) to obtain the desired result.
	Note that the convergence
	in
	\cite[Lemma 3.7]{paper_abstract}
	then means
	$R\FF f_n - Q^{-1}(u_n,v_n,q_n) \rightarrow R\FF f - Q^{-1}(u,v,q)$
	in
	$C(\HH)$,
	so that the convergence
	$(u_n,v_n,q_n) \rightarrow (u,v,q)$
	in
	$C(H^1(\O;\Rd)) \times L^1(L^2(\O;\Rd)) \times C(L^2(\O;\Rdds))$
	follows from the fact that
	the range of
	$R$
	is a subset of
	$\{ 0 \} \times L^2(\O;\Rd) \times \{ 0 \}$.
\end{proof}

\begin{proposition}[Strong convergence for fixed forces]
\label{prop:strong_convergence_for_fixed_forces_inertia}
	Let $f \in H^1(L^2(\Omega;\Rd))$ and
	$(u,v,q) \in H^1(\HH)$ be the solution of \cref{eq:state_equation_as_EVI_inertia}
	and $(u_n,v_n,q_n) \in H^1(\HH)$,
	for every $n \in \mathbb{N}$, the solution of
	\begin{align*}
		Q^{-1}(\boverdot{u}_n,\boverdot{v}_n, \boverdot{q}_n)
		+ \AA_{\lambda_n}(u_n,v_n,q_n) &= Rf, \\
		(u_n,v_n,q_n)(0) &= (u_0,v_0,q_0).
	\end{align*}
	where $\sequence{\lambda}{n} \subset (0,\infty)$, $\lambda_n \searrow 0$.
	
	Then $(u_n,v_n,q_n) \rightarrow (u,v,q)$ in $H^1(\HH)$.
\end{proposition}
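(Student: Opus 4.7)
The strategy is to translate the problem to the abstract setting of \cite{paper_abstract} via the change of variables already used before \cref{thm:weak_convergence_of_the_state_inertia}, and then invoke \cite[Proposition 3.5]{paper_abstract} directly.

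First I set $p := R\FF f - Q^{-1}(u,v,q)$ and $p_n := R\FF f - Q^{-1}(u_n,v_n,q_n)$, both elements of $H^1(\HH)$. Evaluating at $t=0$ gives $p(0) = p_n(0) = p_0$. By the equivalence discussed before \cref{thm:weak_convergence_of_the_state_inertia}, $p$ solves $\boverdot{p} \in \AA(R\FF_\rho f - Qp)$ and $p_n$ solves $\boverdot{p}_n = \AA_{\lambda_n}(R\FF_\rho f - Qp_n)$, both driven by the same right-hand side. Hence $p_n$ is precisely the Yosida-regularized trajectory associated with $p$ in the sense of \cite{paper_abstract}.

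Next, I would apply \cite[Proposition 3.5]{paper_abstract} to this pair to obtain strong convergence $p_n \to p$ in $H^1(\HH)$. From this, the identity $(u_n,v_n,q_n) - (u,v,q) = -Q(p_n - p)$, combined with $Q \in \LL(\HH)$, immediately transfers the convergence to $(u_n,v_n,q_n) \to (u,v,q)$ in $H^1(\HH)$.

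The main obstacle will be justifying the applicability of \cite[Proposition 3.5]{paper_abstract}, since $\AA$ does not satisfy the full list of hypotheses on the abstract maximal monotone operator imposed in \cite[Sect.~2]{paper_abstract}. This is the same issue already encountered in \cref{thm:weak_convergence_of_the_state_inertia}, and I would resolve it in the same manner: those hypotheses entered \cite{paper_abstract} only through its existence theorem \cite[Theorem 3.3]{paper_abstract}, which we have already replaced here by \cref{thm:existence_of_a_solution_to_the_state_equation_inertia}. With this substitution in place, the remainder of the proof of \cite[Proposition 3.5]{paper_abstract} carries over without change.
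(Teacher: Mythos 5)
Your proof is correct and takes essentially the same route as the paper's: transform via $p_n := R\FF f - Q^{-1}(u_n,v_n,q_n)$, invoke \cite[Proposition 3.5]{paper_abstract}, and transfer back through the bounded operator $Q$. The justification for why \cite[Proposition 3.5]{paper_abstract} still applies (replacing \cite[Theorem 3.3]{paper_abstract} with \cref{thm:existence_of_a_solution_to_the_state_equation_inertia}) is exactly the argument the paper already made prior to \cref{thm:weak_convergence_of_the_state_inertia} and relies on again here.
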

\begin{proof}
	We can argue as in the proof of 
	\cref{thm:weak_convergence_of_the_state_inertia},
	the assertion follows then directly	from
	\cite[Proposition 3.5]{paper_abstract}.
\end{proof}

\section{Optimal Control}
\label{sec:inertia_oc}

Also in this
section
we will make use of
\cite{paper_abstract}.
Since the smoothed operator
$\AA_s$,
given in
\cref{def:AAs_inertia},
possesses the required properties for
$A_s$
in
\cite[Assumption 5.1 (ii)]{paper_abstract}
with
$\ZZ = \HH$
(as we will see in
\cref{def:AAs_inertia}
and
\cref{prop:AAsFrechet}
below),
we can apply the finding concerned with
the differentiability of the
solution operator associated with
the EVI
therein.
Before we give the details in
\cref{sec:inertia_oc_oc},
we tend to the existence and
approximation of optimal controls.

\subsection{Existence and Approximation of Optimal Controls}
\label{sec:inertia_oc_eaaooc}

Let us now consider
the optimal control problem
\cref{eq:optimization_problem_inertia}.
Note that we assumed in
\cref{sec:2}
that
$\Psi$
is defined on
$L^2(\HH)$
and not on
$H^1(\HH)$,
which excludes for example
evaluations at certain points
in time.
Similar as in
\cite{paper_abstract},
we could also consider an
objective function on
$H^1(\HH)$,
then we would only obtain a
(possible)
weak solution
of the adjoint state
in
\cref{thm:KKT_conditions_inertia},
see also
\cite[Theorem 5.12]{paper_abstract}.
We decided to define
$\Psi$
on
$L^2(\HH)$
only for simplicity and to keep the
discussion concise.

Since we have transformed our state equation
\cref{eq:state_equation_inertia}
into
\cref{eq:state_equation_transformed_inertia}
by introducing the new variable
$q$,
it is reasonable to do the same with the
optimal control problem.
To this end,
we need the following

\begin{definition}[Transformed objective function]
\label{def:transformed_objective_function}
	We define
	\begin{align*}
		\Psi_z : L^2(\HH) \rightarrow \R,
		\largespace
		(u,v,q) \mapsto \Psi(u,v, \ZZZ(u,q))
	\end{align*}
	and the
	\emph{transformed objective function}
	\begin{align*}
		J_z : L^2(\HH) \times \XXX_c \rightarrow \R,
		\largespace
		(u,v,q,f) \mapsto
		\Psi_z(u,v,q)
		+
		\frac{\alpha}{2}\norm{f}{\XXX_c}^2
	\end{align*}
\end{definition}

Using the
definition
above and the transformation
of the state equation into
\cref{eq:state_equation_as_EVI_inertia},
we obtain the equivalence of
\cref{eq:optimization_problem_inertia}
and
\begin{equation}
\label{eq:optimization_problem_inertia_q}
\left\{\mediumspace
\begin{aligned}
	\min \shortspace
	&
	J_z(u, v,q,f) 
	=
	\Psi_z(u,v,q)
	+
	\frac{\alpha}{2}\norm{f}{\XXX_c}^2,
	\\
	\text{ s.t. } \shortspace
	&
	Q^{-1}(\boverdot{u},\boverdot{v}, \boverdot{q}) + \AA(u,v,q) \ni Rf,
	\mediumspace
	(u, v, q)(0) = (u_0, v_0, q_0),
	\\
	&
	(u,v,q) \in H^1(H^1_D(\O;\Rd) \times L^2(\O;\Rd) \times L^2(\O;\Rdds)),
	\\
	&
	f \in \XXX_c.
\end{aligned}\right.
\end{equation}

Let us now select a sequence
$\sequence{\lambda}{n} \subset (0,\infty)$
such that
$\lambda_n \searrow 0$.
We consider the regularized
optimization problem
\begin{equation}
\label{eq:optimization_problem_inertia_n}
\left\{\mediumspace
\begin{aligned}
	\min \shortspace
	&
	J_z(u, v,q,f) 
	=
	\Psi_z(u,v,q)
	+
	\frac{\alpha}{2}\norm{f}{\XXX_c}^2,
	\\
	\text{ s.t. }
	\shortspace
	&
	Q^{-1}(\boverdot{u},\boverdot{v}, \boverdot{q})
	+
	\AA_{\lambda_n}(u,v,q)
	=
	Rf,
	\mediumspace
	(u, v, q)(0) = (u_0, v_0, q_0)
	\\
	&
	(u,v,q) \in H^1(H^1_D(\O;\Rd) \times L^2(\O;\Rd) \times L^2(\O;\Rdds)),
	\\
	&
	f \in \XXX_c.
\end{aligned}\right.
\end{equation}

\begin{theorem}[Existence and approximation of optimal solutions]
\label{thm:existence_and_approximation_of_optimal_solutions_inertia}
	Suppose that the control space
	$\XXX_c$
	is such that
	$\FF : H^1(L^2(\O;\Rd)) \to H^2(L^2(\O;\Rd))$
	with
	$\FF(f)(t) = \int_0^t f(s) ds$
	is compact from
	$\XXX_c$
	into
	$L^1(L^2(\O;\Rd))$.

	Then there exists a global solution of
	\cref{eq:optimization_problem_inertia_q}
	(and thus of \cref{eq:optimization_problem_inertia})
	and of
	\cref{eq:optimization_problem_inertia_n}
	for every
	$n \in \N$. 
	
	Moreover, let
	$(\overline{u}_n,\overline{v}_n,\overline{q}_n,
	\overline{f}_n)_{n \in \N}$
	be a sequence of global solution of
	\cref{eq:optimization_problem_inertia_n}.
	Then there exists a weak
	accumulation point
	$(\overline{u},\overline{v},\overline{q},\overline{f})$
	and every weak accumulation point
	is a global solution of
	\cref{eq:optimization_problem_inertia_q}.
	The subsequence of states
	which converges weakly towards
	$(\overline{u},\overline{v},\overline{q})$
	in
	$H^1(\HH)$,
	converges also strongly in
	$C(H^1(\O;\Rd)) \times L^1(L^2(\O;\Rd)) \times C(L^2(\O;\Rdds))$
	and,
	when
	$\FF$
	is compact from
	$\XXX_c$
	into
	$C(L^2(\O;\Rd))$,
	then the subsequence of
	$v_n$
	converges also strongly in
	$C(L^2(\O;\Rdds))$.
	Moreover,
	the subsequence of
	controls converges strongly to
	$\overline{f}$
	in
	$\XXX_c$.
\end{theorem}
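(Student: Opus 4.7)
My plan is to run the direct method of the calculus of variations, using the convergence results from \cref{sec:inertia_se_racr} (chiefly \cref{thm:weak_convergence_of_the_state_inertia} and \cref{prop:strong_convergence_for_fixed_forces_inertia}) together with the compactness of $\FF : \XXX_c \to L^1(L^2(\O;\Rd))$. As a preliminary observation, $\Psi_z$ inherits the relevant properties from $\Psi$ through $\ZZZ$: since $\ZZZ$ is linear and bounded, it is weakly continuous, and hence $\Psi_z$ is weakly lower semicontinuous, continuous, and bounded from below on $L^2(\HH)$.

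\textbf{Existence.} For either problem I would take a minimizing sequence $(u_k, v_k, q_k, f_k)$. Coercivity of the Tikhonov term forces boundedness of $(f_k)$ in $\XXX_c$; passing to a subsequence, $f_k \weak \overline{f}$, and the compactness of $\FF$ yields $\FF f_k \to \FF \overline{f}$ in $L^1(L^2(\O;\Rd))$. The a priori bound of \cref{thm:existence_of_a_solution_to_the_state_equation_inertia} (together with the analogous bound for the regularized equation, whose Lipschitz nonlinearity $\AA_{\lambda_n}$ is harmless) bounds the states in $H^1(\HH)$, and \cref{thm:weak_convergence_of_the_state_inertia} furnishes a weak limit $(\overline{u},\overline{v},\overline{q}) \in H^1(\HH)$ satisfying the state equation, together with the strong convergences needed to pass to the limit in the inclusion (original) or in the regularized equation. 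Weak lower semicontinuity of $\Psi_z$ and of the squared norm then closes the argument.

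\textbf{Convergence of the regularized minimizers.} Given global solutions $(\overline{u}_n, \overline{v}_n, \overline{q}_n, \overline{f}_n)$ of \cref{eq:optimization_problem_inertia_n}, the crucial step is a uniform bound on $\overline{f}_n$ in $\XXX_c$. Fix any feasible control $f$ of \cref{eq:optimization_problem_inertia_q} and let $(u^f_n, v^f_n, q^f_n)$ denote the state driven by $f$ in the $n$-th regularized equation. By \cref{prop:strong_convergence_for_fixed_forces_inertia} these converge strongly in $H^1(\HH)$, so $\Psi_z(u^f_n, v^f_n, q^f_n)$ is bounded, and optimality of $(\overline{u}_n, \overline{v}_n, \overline{q}_n, \overline{f}_n)$ gives
\begin{equation*}
	\frac{\alpha}{2} \norm{\overline{f}_n}{\XXX_c}^2
	\leq
	J_z(\overline{u}_n, \overline{v}_n, \overline{q}_n, \overline{f}_n) - \inf \Psi_z
	\leq
	J_z(u^f_n, v^f_n, q^f_n, f) - \inf \Psi_z
	\leq C.
\end{equation*}
Along a subsequence, $\overline{f}_n \weak \overline{f}$ in $\XXX_c$ and $\FF \overline{f}_n \to \FF \overline{f}$ in $L^1(L^2(\O;\Rd))$. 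Applying \cref{thm:weak_convergence_of_the_state_inertia} with $A_n = \AA_{\lambda_n}$ delivers $(\overline{u}_n, \overline{v}_n, \overline{q}_n) \weak (\overline{u}, \overline{v}, \overline{q})$ in $H^1(\HH)$ together with the strong convergences in $C(H^1(\O;\Rd)) \times L^1(L^2(\O;\Rd)) \times C(L^2(\O;\Rdds))$, and the additional compactness of $\FF$ into $C(L^2(\O;\Rd))$ upgrades $v_n$ by the final statement of that theorem. Global optimality of the limit is then obtained by comparing with an arbitrary feasible $(u,v,q,f)$ of \cref{eq:optimization_problem_inertia_q}, combining weak lower semicontinuity on the left with \cref{prop:strong_convergence_for_fixed_forces_inertia} on the right:
\begin{equation*}
	J_z(\overline{u}, \overline{v}, \overline{q}, \overline{f})
	\leq
	\liminf_{n \to \infty} J_z(\overline{u}_n, \overline{v}_n, \overline{q}_n, \overline{f}_n)
	\leq
	\lim_{n \to \infty} J_z(u^f_n, v^f_n, q^f_n, f)
	=
	J_z(u, v, q, f).
\end{equation*}

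\textbf{Strong convergence of controls and main obstacle.} Strong convergence $\overline{f}_n \to \overline{f}$ in $\XXX_c$ is the standard Hilbert-space $\limsup$ argument: testing optimality against $(u^{\overline{f}}_n, v^{\overline{f}}_n, q^{\overline{f}}_n, \overline{f})$ and using \cref{prop:strong_convergence_for_fixed_forces_inertia} yield $\limsup J_z(\overline{u}_n, \overline{v}_n, \overline{q}_n, \overline{f}_n) \leq J_z(\overline{u}, \overline{v}, \overline{q}, \overline{f})$; combined with $\liminf \Psi_z(\overline{u}_n, \overline{v}_n, \overline{q}_n) \geq \Psi_z(\overline{u}, \overline{v}, \overline{q})$, this forces $\limsup \norm{\overline{f}_n}{\XXX_c} \leq \norm{\overline{f}}{\XXX_c}$, whence the weak convergence upgrades to strong convergence in the Hilbert space $\XXX_c$. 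The principal obstacle is the uniform boundedness of $\overline{f}_n$: because the feasible set of the regularized problem depends on $n$, a feasible control of the original problem cannot be plugged in directly, and it is precisely the strong consistency of \cref{prop:strong_convergence_for_fixed_forces_inertia} that allows the comparison family $(u^f_n, v^f_n, q^f_n)$ to serve as a legitimate competitor and furnish the required bound.
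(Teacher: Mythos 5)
Your proposal is correct and follows essentially the same route as the paper: the direct method supported by \cref{thm:weak_convergence_of_the_state_inertia}, compactness of $\FF$, Tikhonov coercivity for boundedness of controls, and \cref{prop:strong_convergence_for_fixed_forces_inertia} as the consistency mechanism that both furnishes the uniform bound on $\overline{f}_n$ and closes the $\limsup$ argument for strong convergence. The paper's own proof is deliberately terse, delegating the details to the analogous arguments in the cited abstract paper; you have simply written those details out, and your identification of the comparison family $(u^f_n,v^f_n,q^f_n)$ as the crucial device is exactly what the reference to \cref{prop:strong_convergence_for_fixed_forces_inertia} is doing there.
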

\begin{proof}
	The existence of
	a global solution to
	\cref{eq:optimization_problem_inertia_q}
	follows from the standard
	direct method of the calculus of variations using
	\cref{thm:weak_convergence_of_the_state_inertia}
	and the assumed compactness of
	$\FF$,
	the proof is for instance
	analog to the proof of
	\cite[Theorem 4.2]{paper_abstract}.
	The existence of a global solution to
	\cref{eq:optimization_problem_inertia_n}
	follows easily using the
	Lipschitz continuity of
	$\AA_{\lambda_n}$
	(which implies the
	Lipschitz continuity
	of the corresponding
	solution operator).
	
	The convergence result can also
	be obtained by standard arguments
	using again
	\cref{thm:weak_convergence_of_the_state_inertia}
	and
	\cref{prop:strong_convergence_for_fixed_forces_inertia},
	the proof is again analog to
	\cite[Theorem 4.5 \& Corollary 4.6]{paper_abstract}.
	Note that the strong convergence
	of the states in
	$C(H^1(\O;\Rd)) \times L^1(L^2(\O;\Rd)) \times C(L^2(\O;\Rdds))$,
	and also of
	$v_n$
	in
	$C(L^2(\O;\Rdds))$
	when
	$\FF$
	is compact from
	$\XXX_c$
	into
	$C(L^2(\O;\Rd))$,
	follows directly from
	\cref{thm:weak_convergence_of_the_state_inertia}.
\end{proof}

A strong convergence result
of the states
(in
$H^1(\HH)$)
is not provided in the
theorem
above.
In
\cite[Corollary 4.6]{paper_abstract}
we were able to prove the strong convergence
either when the associated maximal monotone operator
is a subdifferential,
which is here not the case
(\cref{rem:AA_is_not_a_subdifferential_inertia}),
or when it can be deduced from the weak convergence
and the convergence of the evaluations of
$\Psi$.
Since we supposed that
$\Psi$
is defined on
$L^2(\HH)$,
this cannot be the case.
However,
as elaborated on at the beginning of this
section,
it is possible for instance to consider
a different
$\Psi$
defined on
$H^1(\HH)$
such that this property holds.

Let us shortly interrupt the discussion
and give two examples for the control space
$\XXX_c$.

\begin{example}[Control space]
\label{ex:control_space_inertia}
	In order to satisfy
	the assumption on
	$\XXX_c$
	in	
	\cref{thm:existence_and_approximation_of_optimal_solutions_inertia},
	we can use the lemma of Lions-Aubin
	(cf.
	\cite[III. Proposition 1.3]{showalter})
	and for instance choose
	$\XXX_c = H^1(L^2(\O;\Rd)) \cap L^2(H^1(\O;\Rd))$
	or
	$\XXX_c = \{
		f \in H^1(L^2(\O;\Rd))
		:
		\FF f \in L^2(H^1(\O;\Rd))
	\}$
	with corresponding norms.
\end{example}

Having dealt with the existence and approximation
of optimal solutions we
turn to the optimality condition for a
further smoothed problem.

\subsection{Optimality Conditions}
\label{sec:inertia_oc_oc}

In order to derive
first order optimality conditions
we
smoothen at first the optimal control
problem further.
Then we prove the differentiability
of the smoothed solution operator
and can after that finally present our main result,
the optimality
conditions for the
smoothed optimization problem.

We impose the following assumptions
for the rest of this subsection.

\begin{assumption}[Standing assumptions for \cref{sec:inertia_oc_oc}]
\label{assu:standing_assumptions_for_sec42}
	\begin{enumerate}
	[label=(\roman*)]
		\item
		\label{assu:item:Rs_inertia}
		Let
		$R_s : \Rdds \rightarrow \Rdds$
		be monotone,
		Lipschitz continuous and
		Fr\'{e}chet differentiable.
		
		\item
		\label{assu:item:phat_and_p_inertia}
		We fix
		$2 < \hat{p} < p < \overline{p}$,
		where
		$\overline{p}$
		is from
		\cref{cor:W1r_existence_inertia}
		(respectively
		\cite[Theorem 1.1]{herzog}),
		such that
		$2 - \frac{d}{2} \geq - \frac{d}{\overline{p}}$.
		
		\item
		\label{assu:item:initial_conditions_regularity_inertia}
		Let the initial data
		$(u_0, v_0, q_0)$
		be an element of
		$\YY_p$,
		where
		$p$
		is given in
		\Cref{assu:item:phat_and_p_inertia}.
	\end{enumerate}	 
\end{assumption}

Thanks to
\cref{prop:concrete_form_of_yosida_inertia},
we can give the precise form
of the resolvent
and Yosida approximation of
$\AA$
in the following

\begin{corollary}[Precise form of the resolvent]
\label{cor:concrete_form_of_yosida_inertia}
	Let
	$\lambda > 0$
	and denote the resolvent of
	$\AA$
	by
	$\RR_\lambda$.
	Then
	\begin{align*}
		\RR_\lambda(h)
		=
		\begin{pmatrix}
			\TT_{R_\lambda}(h) \\
			\frac{1}{\lambda} \TT_{R_\lambda}(h) - \frac{h_1}{\lambda} \\
			R_\lambda(\E^{\top} \symnabla(\TT_{R_\lambda}(h) - h_1) + h_3)
		\end{pmatrix}
	\end{align*}
	so that
	\begin{align*}
		\AA_\lambda(h)
		=
		\frac{1}{\lambda}
		\begin{pmatrix}
			h_1 - \TT_{R_\lambda}(h) \\
			h_2 - \frac{1}{\lambda} \TT_{R_\lambda}(h) + \frac{h_1}{\lambda} \\
			h_3 - R_\lambda(\E^{\top} \symnabla(\TT_{R_\lambda}(h) - h_1) + h_3)
		\end{pmatrix}
	\end{align*}
	for every
	$h = (h_1,h_2,h_3) \in \HH$.
\end{corollary}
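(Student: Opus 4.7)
The proof is essentially a direct citation of the earlier \cref{prop:concrete_form_of_yosida_inertia} combined with the textbook identity between the resolvent and the Yosida approximation, so my plan is to keep it short and schematic rather than recompute anything.

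First I would recall that, by definition, the resolvent $\RR_\lambda = (I + \lambda \AA)^{-1}$ assigns to each $h \in \HH$ the unique element $(u,v,q) \in D(\AA)$ satisfying $(u,v,q) + \lambda \AA(u,v,q) \ni h$. Proposition \cref{prop:concrete_form_of_yosida_inertia} has already carried out this construction for arbitrary $\lambda > 0$ and $h \in \HH$: existence, uniqueness, and the explicit componentwise formula in terms of $\TT_{R_\lambda}$ and $R_\lambda$ are exactly what the proposition delivers. Rewriting the middle component $\tfrac{1}{\lambda}(\TT_{R_\lambda}(h) - h_1)$ as $\tfrac{1}{\lambda}\TT_{R_\lambda}(h) - \tfrac{h_1}{\lambda}$ immediately yields the claimed expression for $\RR_\lambda(h)$.

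For the Yosida approximation I would use the standard identity $\AA_\lambda = \tfrac{1}{\lambda}(I - \RR_\lambda)$ recalled in the standing assumptions of \cref{sec:2} (applied to the maximal monotone operator $\AA$, whose maximal monotonicity is established in \cref{prop:AA_maximal_monotone_inertia}). Subtracting the three components of $\RR_\lambda(h)$ from $h = (h_1,h_2,h_3)$ and multiplying the resulting vector by $\tfrac{1}{\lambda}$ gives precisely the displayed formula for $\AA_\lambda(h)$; the second slot simplifies through $h_2 - \tfrac{1}{\lambda}(\TT_{R_\lambda}(h) - h_1) = h_2 - \tfrac{1}{\lambda}\TT_{R_\lambda}(h) + \tfrac{h_1}{\lambda}$, matching the stated expression after the common factor $\tfrac{1}{\lambda}$ is pulled out.

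There is effectively no hard part here: all nontrivial content, namely the solvability of the nonlinear elliptic problem defining $\TT_{R_\lambda}$ in $H^1_D(\O;\Rd)$, the pointwise representation of $R_\lambda$ on $\Rdds$, and the maximal monotonicity of $\AA$ ensuring that $\RR_\lambda$ and $\AA_\lambda$ are single-valued and defined on all of $\HH$, has already been established earlier. The only bookkeeping step is to make sure that the second entry is written in the normalized form used in the corollary, which is immediate.
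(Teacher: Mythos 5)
Your proposal matches the paper exactly: the corollary is stated without a separate proof precisely because it is an immediate consequence of \cref{prop:concrete_form_of_yosida_inertia} (which already identifies the unique solution of $(u,v,q)+\lambda\AA(u,v,q)\ni h$) together with the definitional identity $\AA_\lambda=\tfrac{1}{\lambda}(I-\RR_\lambda)$ for the maximal monotone operator $\AA$. Nothing is missing and no further computation is required.
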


The Yosida approximation
$\AA_\lambda$
is in view of
\cref{prop:TR_lipschitz_inertia}
Lipschitz continuous from
$\YY_p$
to
$\YY_p$,
where
$p$
is given in
\cref{assu:standing_assumptions_for_sec42}
\cref{assu:item:phat_and_p_inertia}.
Therefore the state equation in
\cref{eq:optimization_problem_inertia_n}
admits a solution in
$\YY_p$
(note that
$R$
maps into
$\YY_\infty$).
However,
since this regularity is not present in
\cref{eq:optimization_problem_inertia},
we did not use it.
In contrast,
the same is true for
the smoothed Yosida approximation,
which is given below in
\cref{def:AAs_inertia}
(see
\cref{def:smoothed_solution_operator_inertia}),
but here this additional regularity
will be used to prove the differentiability
of the smoothed solution operator in
\cref{prop:frechet_differentiability_of_the_smoothed_solution_operator_inertia}.

In order to smoothen
the Yosida approximation,
respectively the resolvent,
of
$\AA$,
we smoothen the resolvent of
$A$
and then define the smoothed resolvent for
$\AA$
analog to
$\RR_\lambda$.
We denote this smoothed resolvent of
$A$
by
$R_s : \Rdds \to \Rdds$
(which indicates that
the resolvent of
$A$
can be expressed pointwise),
from the properties given in
\cref{assu:standing_assumptions_for_sec42}
\cref{assu:item:Rs_inertia}
one can easily derive
the following inequalities,
which will be useful when
proving the differentiability of
$\TT_{R_s}$
in
\cref{lem:T_Frechet_inertia}
below.

\begin{lemma}[Properties of $R_s'$]
\label{lem:Rs_derivative_estimate_inertia}
	There exists a constant
	$C$
	such that
	$| R_s'(\sigma) \tau |
	\leq C | \tau |$
	and
	$0 \leq R_s'(\sigma)\tau \colon \tau$
	holds for all
	$\sigma,\tau\in\Rdds$.
	Moreover,
	the same is true for
	$R_s'(\cdot)^*$.
\end{lemma}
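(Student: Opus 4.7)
The plan is to obtain both inequalities by passing to the limit in the defining inequalities for Lipschitz continuity and monotonicity of $R_s$, and then exploit the finite-dimensional setting to transfer both properties to the adjoint.

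First I would establish the bound $|R_s'(\sigma)\tau| \leq C|\tau|$. Let $L_{R_s}$ be the Lipschitz constant of $R_s$. For fixed $\sigma,\tau \in \Rdds$ and $t > 0$, Fréchet differentiability together with Lipschitz continuity gives
\begin{align*}
	|R_s'(\sigma)\tau|
	=
	\lim_{t \searrow 0}
	\frac{|R_s(\sigma + t\tau) - R_s(\sigma)|}{t}
	\leq
	L_{R_s} |\tau|,
\end{align*}
so we can take $C := L_{R_s}$.

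Second, for the inequality $0 \leq R_s'(\sigma)\tau \colon \tau$, I use the monotonicity of $R_s$. For $t > 0$ we have
\begin{align*}
	0
	\leq
	\big(R_s(\sigma + t\tau) - R_s(\sigma)\big) \colon (t\tau)
	=
	t^2 \, \frac{R_s(\sigma + t\tau) - R_s(\sigma)}{t} \colon \tau.
\end{align*}
Dividing by $t^2$ and letting $t \searrow 0$ yields $R_s'(\sigma) \tau \colon \tau \geq 0$ by Fréchet differentiability.

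Finally, the corresponding statements for $R_s'(\sigma)^*$ are immediate in the finite-dimensional setting $\Rdds$ with the Frobenius inner product. For the norm bound, the operator norm of a linear map on a finite-dimensional Hilbert space coincides with that of its adjoint, so $|R_s'(\sigma)^* \tau| \leq C|\tau|$ with the same constant $C$. For the positivity, by definition of the adjoint
\begin{align*}
	R_s'(\sigma)^* \tau \colon \tau
	=
	\tau \colon R_s'(\sigma) \tau
	=
	R_s'(\sigma)\tau \colon \tau
	\geq 0.
\end{align*}
No step here should pose any real difficulty; the only thing one needs to be careful about is making clear that the pointwise limits in $t$ really produce the claimed estimates, which is justified by the Fréchet differentiability hypothesis in \cref{assu:standing_assumptions_for_sec42}~\cref{assu:item:Rs_inertia}.
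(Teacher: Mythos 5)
Your proof is correct and follows essentially the same approach as the paper: both inequalities are obtained by passing to the limit in the difference quotients (using Lipschitz continuity and monotonicity, respectively), and the positivity of the adjoint follows from the symmetry of the pairing. The only minor deviation is the norm bound for $R_s'(\sigma)^*$: you invoke the general fact that a linear operator on a Hilbert space and its adjoint have equal operator norms, whereas the paper re-derives the bound by choosing $\tau = R_s'(\sigma)^*\eta$ in the already established estimates, together with Cauchy--Schwarz; both are valid and of comparable brevity.
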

\begin{proof}
	Let
	$\sigma,\tau\in\Rdds$
	be arbitrary.
	The Lipschitz continuity and
	Fr\'{e}chet differentiability of
	$R_s$
	gives
	\begin{align*}
		\Big{|}
			\frac{r(t\tau)}{t}
			+
			R_s'(\sigma)\tau
		\Big{|}
		=
		\frac{| R_s(\sigma + t\tau) - R_s(\sigma) |}{t}
		\leq
		L | \tau |
	\end{align*}
	for all $t \in \R\setminus\{ 0 \}$,
	where
	$r$
	is the remainder term of
	$R_s$.
	The limit $t \rightarrow 0$ yields the first assertion.
	
	The second claim follows using the monotonicity,
	\begin{align*}
		0
		\leq
		\frac{R_s(\sigma + t\tau) - R_s(\sigma)}{t} \colon \tau
		\rightarrow
		R_s'(\sigma)\tau \colon \tau
	\end{align*}
	as
	$0 \neq t \rightarrow 0$.
	
	Now,
	by definition we have
	$R_s'(\sigma)\tau \colon \eta = \tau \colon R_s'(\sigma)^*\eta$
	for all
	$\sigma,\tau,\eta \in \Rdds$,
	so that the second assertion also holds for
	$R_s'(\cdot)^*$.
	Choosing in particular
	$\tau = R_s(\sigma)^*\eta$
	we get
	\begin{align*}
		| R_s'(\sigma)^*\eta |^2
		=
		| R_s'(\sigma)R_s'(\sigma)^*\eta \colon \eta |
		\leq
		C
		| R_s'(\sigma)^*\eta |
		\
		| \eta |,
	\end{align*}
	which yields the first assertion for
	$R_s'(\cdot)^*$.
\end{proof}

\begin{definition}[Smoothed resolvent]
\label{def:AAs_inertia}
	Let
	$\lambda_s \in (0,\infty)$.
	We define
	\begin{align*}
		\RR_s : \YY_p \rightarrow \YY_p,
		\mediumspace
		h = (h_1,h_2,h_3) \mapsto
		\begin{pmatrix}
			\TT_{R_s}(h) \\
			\frac{1}{\lambda_s} \TT_{R_s}(h) - \frac{h_1}{\lambda_s} \\
			R_s(\E^{\top} \symnabla (\TT_{R_s}(h) - h_1) + h_3)
		\end{pmatrix}
	\end{align*}
	and
	$\AA_s \define \frac{1}{\lambda_s}(I - \RR_s)$
	(see
	\cref{assu:standing_assumptions_for_sec42}
	\cref{assu:item:phat_and_p_inertia}
	for
	$p$).
	According to
	\cref{prop:TR_lipschitz_inertia}
	and
	\cref{assu:standing_assumptions_for_sec42}
	\cref{assu:item:Rs_inertia},
	$\RR_s$
	and
	$\AA_s$
	are well defined and Lipschitz continuous.
	As usual,
	with a slight abuse of notation,
	we denote operators for different
	$p$
	with the same symbol.
\end{definition}

Let us now consider
the smoothed optimization problem
\begin{equation}
\label{eq:optimization_problem_inertia_s}
\left\{\mediumspace
\begin{aligned}
	\min \shortspace
	&
	J_z(u, v,q,f) 
	=
	\Psi_z(u,v,q)
	+
	\frac{\alpha}{2}\norm{f}{\XXX_c}^2,
	\\
	\text{ s.t. }
	\shortspace
	&
	Q^{-1}(\boverdot{u},\boverdot{v}, \boverdot{q})
	+
	\AA_{s}(u,v,q)
	=
	Rf,
	\mediumspace
	(u, v, q)(0) = (u_0, v_0, q_0)
	\\
	&
	(u,v,q) \in H^1(H^1_D(\O;\Rd) \times L^2(\O;\Rd) \times L^2(\O;\Rdds)),
	\\
	&
	f \in \XXX_c.
\end{aligned}\right.
\end{equation}

Analog to
\cref{thm:existence_and_approximation_of_optimal_solutions_inertia}
one can analogously prove
that there exists
a global solution of
\cref{eq:optimization_problem_inertia_s}.

As was done in
\cite[Theorem 4.5 \& Corollary 4.6]{paper_abstract},
when
$\AA_s$
and
$\AA_{\lambda_s}$
are globally
``close together'',
one can prove a result analog to
the convergence result in
\cref{thm:existence_and_approximation_of_optimal_solutions_inertia}
with a sequence
$(\overline{u}_s,\overline{v}_s,\overline{q}_s,
\overline{f}_s)_{s > 0}$
of global solutions to
\cref{eq:optimization_problem_inertia_s}
when
$\sup_{h\in\HH}\norm{\AA_{\lambda_s}(h) - \AA_s(h)}{\HH}$
tends fast enough to zero relative to
$\lambda_s$.
The following lemma shows
that this is the case
when the same is true for
$\frac{1}{\lambda_s}\sup_{\tau \in L^2(\O;\Rdds)}
\norm{A_{\lambda_s}(\tau) - A_s(\tau)}{L^2(\O;\Rdds)}$
with
$A_s = \frac{1}{\lambda_s}(I - R_s)$,
which holds in the case of
the von-Mises flow rule
investigated in
\cref{sec:examples}
below for suitable sequences
$\{ \lambda_s \}_{\lambda_s > 0}$
and
$\{ s \}_{s > 0}$,
cf.
\cref{eq:smoothYosidaEstimate}.
Note also that
\cite[Lemma 3.15]{paper_abstract}
was used in
\cite[Theorem 4.5 \& Corollary 4.6]{paper_abstract},
so that it was in particular required that
$\sup_{\tau \in L^2(\O;\Rdds)}
\norm{A_{\lambda_s}(\tau) - A_s(\tau)}{L^2(\O;\Rdds)}$
tends faster to zero than
$\textup{exp}(\frac{1}{\lambda_s})$,
thus the additional factor
$\frac{1}{\lambda_s}$
does not play a big role.

\begin{lemma}[Convergence of the smoothed resolvent]
\label{lem:estimate_for_convergence_of_the_regularized_yosida_approximation_inertia}
	The inequality
	\begin{align*}
		\norm{\AA_{\lambda_s}(h) - \AA_s(h)}{\HH}
		\leq
		C \sqrt{1 + \frac{1}{\lambda_s^2}}
		\sup_{\tau \in L^2(\O;\Rdds)}
		\norm{A_{\lambda_s}(\tau) - A_s(\tau)}{L^2(\O;\Rdds)}
	\end{align*}
	holds for all
	$h \in L^2(\O;\Rdds)$,
	where
	$A_s \define \frac{1}{\lambda_s}(I - R_s)$
	and the constant
	does only depend on
	$\C$
	and
	$\B$,
	$C = C(\C, \B)$.
\end{lemma}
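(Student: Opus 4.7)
The plan is to exploit the explicit formulae provided by \cref{cor:concrete_form_of_yosida_inertia} and \cref{def:AAs_inertia}, which make the pointwise-in-$h$ identity $\AA_{\lambda_s}(h) - \AA_s(h) = \frac{1}{\lambda_s}\bigl(\RR_s(h) - \RR_{\lambda_s}(h)\bigr)$ available. Abbreviating $w \define \TT_{R_s}(h) - \TT_{R_{\lambda_s}}(h)$ and $b_\star \define \E^\top \symnabla(\TT_{R_\star}(h) - h_1) + h_3$ for $\star \in \{ s, \lambda_s \}$, the three components of this difference are $w/\lambda_s$ in the first slot, $w/\lambda_s^2$ in the second, and $(R_s(b_s) - R_{\lambda_s}(b_{\lambda_s}))/\lambda_s$ in the third. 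Unfolding the $\HH$-norm via the scalar product of \cref{def:AA_and_spaces_inertia}, the task reduces to controlling $\norm{\symnabla w}{L^2(\O;\Rdds)}$, $\norm{w}{L^2(\O;\Rd)}$, and the third-component resolvent difference in $L^2(\O;\Rdds)$ in terms of $S \define \sup_{\tau \in L^2(\O;\Rdds)} \norm{R_{\lambda_s}(\tau) - R_s(\tau)}{L^2(\O;\Rdds)}$.

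The central step is the PDE estimate on $w$. I would subtract the two equations from \cref{prop:TR_lipschitz_inertia} that define $\TT_{R_{\lambda_s}}(h)$ and $\TT_{R_s}(h)$ and test with $w \in H^1_D(\O;\Rd)$. The coercivity of $\D$ and the zeroth-order term produce the good contributions $\gamma_\D \norm{\symnabla w}{L^2}^2 + \norm{w}{L^2}^2/\lambda_s^2$, while the nonlinear part gives the cross term $\int(R_{\lambda_s}(b_{\lambda_s}) - R_s(b_s)):(b_{\lambda_s} - b_s)$, using the identity $b_{\lambda_s} - b_s = \E^\top \symnabla w$. The key trick is to add and subtract $R_{\lambda_s}(b_s)$: the monotone remainder $\int(R_{\lambda_s}(b_{\lambda_s}) - R_{\lambda_s}(b_s)):(b_{\lambda_s} - b_s)$ is non-negative by monotonicity of $R_{\lambda_s}$ and can be moved to the left-hand side, while the leftover $\int(R_{\lambda_s}(b_s) - R_s(b_s)):(b_{\lambda_s} - b_s)$ is bounded by $S\,\norm{\E}{}\,\norm{\symnabla w}{L^2}$. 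Young's inequality then yields $\norm{\symnabla w}{L^2} \leq c_1 S$ and $\norm{w}{L^2} \leq c_2 \lambda_s S$ with constants depending only on $\D$ and $\E$, hence only on $\C$ and $\B$.

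For the third component I would split $R_s(b_s) - R_{\lambda_s}(b_{\lambda_s}) = [R_s(b_s) - R_{\lambda_s}(b_s)] + [R_{\lambda_s}(b_s) - R_{\lambda_s}(b_{\lambda_s})]$; the first summand has $L^2$-norm at most $S$ by definition and the second, using that the Yosida resolvent $R_{\lambda_s}$ is $1$-Lipschitz (cf.\ the remark immediately following \cref{prop:TR_lipschitz_inertia}) together with the previously established $H^1$-bound on $w$, is bounded by $\norm{\E}{}\norm{\symnabla w}{L^2} \leq c_3 S$. Plugging all three bounds into the $\HH$-norm and summing squares gives $\norm{\AA_{\lambda_s}(h) - \AA_s(h)}{\HH} \leq C(\C,\B)\, S/\lambda_s$, and since $A_\mu = \frac{1}{\mu}(I - R_\mu)$ implies $S = \lambda_s \sup_\tau \norm{A_{\lambda_s}(\tau) - A_s(\tau)}{L^2(\O;\Rdds)}$, the claimed inequality follows at once (the factor $\sqrt{1 + 1/\lambda_s^2}$ is a coarser but sufficient upper bound for $1/\lambda_s$). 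The main obstacle I anticipate is ensuring that every constant along the way depends solely on $\C$ and $\B$ and, in particular, not on the a priori unknown Lipschitz constant of the smoothed resolvent $R_s$; this is precisely why the splittings in both the $w$-estimate and the third-component estimate are routed through $R_{\lambda_s}(b_s)$ rather than through $R_s(b_{\lambda_s})$.
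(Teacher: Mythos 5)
Your proof is correct and follows essentially the same route as the paper's: you subtract the two equations defining $\TT_{R_s}(h)$ and $\TT_{R_{\lambda_s}}(h)$, test with the difference, split the nonlinear term through $R_{\lambda_s}(b_s)$, discard the monotone remainder via the monotonicity of $R_{\lambda_s}$, bound the leftover by $S$, and then estimate the third component via the $1$-Lipschitz continuity of $R_{\lambda_s}$ and the established $H^1$-bound on $w$ — this is exactly the paper's splitting and exactly the paper's reason for routing through the Yosida resolvent rather than $R_s$. The only (minor) difference is that your bookkeeping yields $\norm{\AA_{\lambda_s}(h)-\AA_s(h)}{\HH}\leq C\sup_\tau\norm{A_{\lambda_s}(\tau)-A_s(\tau)}{L^2(\O;\Rdds)}$ outright, which is slightly sharper than, and trivially implies, the stated bound with the factor $\sqrt{1+1/\lambda_s^2}$; your parenthetical remark on that factor is therefore a touch misleading (after substituting $S=\lambda_s\sup_\tau\norm{A_{\lambda_s}-A_s}{L^2}$ the $1/\lambda_s$ has already cancelled), but the argument is sound.
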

\begin{proof}
	Let us abbreviate
	\begin{align*}
		M \define
		\sup_{\tau \in L^2(\O;\Rdds)}
		\norm{R_{\lambda_s}(\tau) - R_s(\tau)}{L^2(\O;\Rdds)}.
	\end{align*}
	Due to the definitions of
	$\AA_s$
	and
	$A_s$
	we only have to prove that
	\begin{align}
	\label{eq:local028102019}
		\norm{\RR_{\lambda_s}(h) - \RR_s(h)}{\HH}
		\leq
		C \sqrt{1 + \frac{1}{\lambda_s^2}} M
	\end{align}
	holds for all
	$h \in \HH$.
	To this end let
	$h \in \HH$
	be arbitrary and abbreviate
	$u \define \TT_{R_{\lambda_s}}(h),
	u_s \define \TT_{R_s}(h) \in H^1_D(\O;\Rd)$,
	hence,
	$u$
	is the solution of
	\cref{eq:TR_def_inertia}
	with respect to
	$R_{\lambda_s}$
	and
	$u_s$ with respect to
	$R_s$,
	testing both equations with
	$u - u_s$
	and subtracting the second
	from the first,
	we get
	\begin{align*}
		&
		\norm{\symnabla (u - u_s)}{L^2(\O;\Rdds)_{\D}}^2
		+
		\bignorm{\frac{u - u_s}{\lambda_s}}{L^2(\O;\Rd)}^2
		\\
		&\largespace\largespace
		=
		-\scalarproduct{\E(R_{\lambda_s}(w) - R_{s}(w_s))}
		{\symnabla (u - u_s)}{L^2(\O;\Rdds)}
		\\
		&\largespace\largespace
		=
		-\scalarproduct{(R_{\lambda_s}(w_s) - R_{s}(w_s))}
		{w - w_s}{L^2(\O;\Rdds)}
		\\
		&\largespace\largespace\largespace\largespace
		-\scalarproduct{(R_{\lambda_s}(w) - R_{\lambda_s}(w_s))}
		{w - w_s}{L^2(\O;\Rdds)}
		\\
		&\largespace\largespace
		\leq
		-\scalarproduct{\D^{-1}\E(R_{\lambda_s}(w_s) - R_{s}(w_s))}
		{\symnabla (u - u_s)}{L^2(\O;\Rdds)_{\D}}
		\\
		&\largespace\largespace
		\leq
		\frac{1}{2}
		\norm{\D^{-1}\E(R_{\lambda_s}(w) - R_{s}(w_s))}
		{L^2(\O;\Rdds)_{\D}}^2
		+
		\frac{1}{2}
		\norm{\symnabla (u - u_s)}{L^2(\O;\Rdds)_{\D}}^2
		\\
		&\largespace\largespace
		\leq
		\frac{\norm{\E^{\top} \D^{-1}\E}{}}{2} M^2
		+
		\frac{1}{2}
		\norm{\symnabla (u - u_s)}{L^2(\O;\Rdds)_{\D}}^2
	\end{align*}
	with
	$w \define \E^{\top} \symnabla (u - h_1) + h_3)$
	and
	$w_s \define \E^{\top} \symnabla(u_s - h_1) + h_3)$,
	where we used in particular the monotonicity of
	$R_{\lambda_s}$.
	Thus we obtain
	\begin{align}
	\label{eq:local18112020}
		\norm{\symnabla (u - u_s)}{L^2(\O;\Rdds)_{\D}}^2
		+
		\bignorm{\frac{u - u_s}{\lambda_s}}{L^2(\O;\Rd)}^2
		\leq
		C M^2.
	\end{align}
	We get further
	\begin{align*}
		\norm{R_{\lambda_s}(w) &- R_s(w_s)}{L^2(\O;\Rdds)}
		\\
		&\leq
		\norm{R_{\lambda_s}(w) - R_{\lambda_s}(w_s)}{L^2(\O;\Rdds)}
		+
		\norm{R_{\lambda_s}(w_s) - R_s(w_s)}{L^2(\O;\Rdds)}
		\\
		&\leq
		\frac{C}{\lambda_s}
		M
		+
		M		
	\end{align*}
	where we have used
	\cref{eq:local18112020}.
	We arrive at
	\begin{align*}
		\norm{\RR_{\lambda_s}(h) - \RR_s(h)}{\HH}^2
		\leq
		C M^2
		+
		\frac{C}{\lambda_s^2}M^2
	\end{align*}
	which implies
	\cref{eq:local028102019}.
\end{proof}

Let us now turn to optimality conditions.
We first need to prove
the Fr\'{e}chet differentiability of
the smoothed solution operator
of the constraint in
\cref{eq:optimization_problem_inertia_s}.
To this end,
we need two norm gaps in
\cref{lem:T_Frechet_inertia}
and
\cref{prop:AAsFrechet},
recall that the corresponding coefficients
are fixed in
\cref{assu:standing_assumptions_for_sec42}
\cref{assu:item:phat_and_p_inertia}.

\begin{lemma}[Fr\'{e}chet differentiability of $\TT_{R_s}$]
\label{lem:T_Frechet_inertia}
	The operator
	$\TT_{R_s}$
	is from
	$\YY_{p}$
	into
	$W^{1,\hat{p}}_D(\O;\Rd)$
	Fr\'{e}chet differentiable and,
	for
	$h,g \in  \YY_p$,
	$\eta \define \TT_{R_s}'(h) g$
	is of class
	$W^{1,p}_D(\O;\Rd)$
	and the unique solution of
	\begin{align}
	\label{eq:TDerivative}
	\begin{split}
		&
		-\div(
			\D \symnabla \eta
			+
			\E R_s'(\E^{\top} \symnabla (u - h_1) + h_3)
			(\E^{\top} \symnabla (\eta - g_1) + g_3))
		)
		=
		\frac{g_2}{\lambda_s} + \frac{g_1 - \eta}{\lambda_s^2}
	\end{split}
	\end{align}
	for all
	$\varphi \in W^{1,p'}_D(\O;\Rd)$,
	where
	$u \define \TT_{R_s}(h)$.
	
	Moreover,
	there exists a constant
	$C$
	such that
	the extension of
	$\TT_{R_s}'(h)$
	to an element of
	$\LL(\HH;H^1_D(\O;\Rd))$
	fulfills
	$\norm{\TT_{R_s}'(h)g}{H^1_D(\O;\Rd)}
	\leq
	C \norm{g}{\HH}$
	for all
	$h \in \YY_p$
	and
	$g \in \HH$.
\end{lemma}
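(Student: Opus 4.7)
The plan is to follow the standard strategy for Fréchet differentiability of quasilinear elasticity-type operators: identify the candidate derivative $\eta$ by linearizing the defining equation \eqref{eq:TR_def_inertia}, solve the linearization via \cref{cor:W1r_existence_inertia}, and control the error through a norm gap exploiting $\hat p < p$. Fix $h \in \YY_p$ and write $u \define \TT_{R_s}(h)\in W^{1,p}_D(\O;\Rd)$ together with $w \define \E^{\top}\symnabla(u-h_1)+h_3 \in L^{p}(\O;\Rdds)$. The linearized equation \eqref{eq:TDerivative} fits the framework of \cref{cor:W1r_existence_inertia} through the affine family
\[
b_\sigma(x,\tau) \define \D\tau + \E\, R_s'(w(x))(\E^{\top}\tau + \sigma).
\]
Properties \eqref{eq:W1r_existence_inertia_1}, \eqref{eq:W1r_existence_inertia_2} and \eqref{eq:W1r_existence_inertia_4} follow from the uniform bound on $R_s'$ provided by \cref{lem:Rs_derivative_estimate_inertia}, while \eqref{eq:W1r_existence_inertia_3} is checked exactly as in \cref{prop:TR_lipschitz_inertia}, using the coercivity of $\D$ and the non-negativity $R_s'(\sigma)\tau\colon\tau\geq 0$. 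This yields the unique solution $\eta \in W^{1,p}_D(\O;\Rd)$ and, by linearity, a bounded operator $g \mapsto \eta$ from $\YY_p$ to $W^{1,p}_D(\O;\Rd)$.

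To show that this operator is the Fréchet derivative, set $u_g \define \TT_{R_s}(h+g)$, $w_g \define \E^{\top}\symnabla(u_g - h_1 - g_1) + h_3 + g_3$ and $r \define u_g - u - \eta$. Subtracting the defining equations for $u_g$, $u$ and $\eta$ and using the identity
\[
w_g - w = \E^{\top}\symnabla r + \bigl(\E^{\top}\symnabla(\eta - g_1)+g_3\bigr)
\]
produces the linear problem
\[
-\div\bigl(\D\symnabla r + \E\, R_s'(w)\E^{\top}\symnabla r\bigr) + \tfrac{r}{\lambda_s^2}
= \div\bigl(\E\,[R_s(w_g) - R_s(w) - R_s'(w)(w_g-w)]\bigr).
\]
Invoking \cref{cor:W1r_existence_inertia} once more, but now with exponent $\hat p$, gives the Lipschitz-type bound
\[
\|r\|_{W^{1,\hat p}(\O;\Rd)} \leq C\,\|R_s(w_g)-R_s(w)-R_s'(w)(w_g-w)\|_{L^{\hat p}(\O;\Rdds)}.
\]

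The main obstacle is showing that this remainder is $o(\|g\|_{\YY_p})$, which is exactly where the norm gap is essential. By \cref{prop:TR_lipschitz_inertia}, $\TT_{R_s}\colon \YY_p \to W^{1,p}_D(\O;\Rd)$ is Lipschitz, so $\|w_g - w\|_{L^p} \leq C\|g\|_{\YY_p}$ and, along subsequences, $w_g \to w$ almost everywhere. The Fréchet differentiability of $R_s$ combined with the uniform bound on $R_s'$ from \cref{lem:Rs_derivative_estimate_inertia} yields the pointwise estimate $|R_s(w_g)-R_s(w)-R_s'(w)(w_g-w)| \leq 2L\,|w_g-w|$, while the same ratio tends to zero a.e. as $w_g \to w$. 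Setting $\alpha \define p/\hat p > 1$ and $\alpha' \define \alpha/(\alpha-1)$, Hölder's inequality produces
\[
\|R_s(w_g)-R_s(w)-R_s'(w)(w_g-w)\|_{L^{\hat p}}
\leq \Bigl\| \tfrac{R_s(w_g)-R_s(w)-R_s'(w)(w_g-w)}{|w_g-w|} \Bigr\|_{L^{\hat p \alpha'}} \|w_g-w\|_{L^p}.
\]
Lebesgue's dominated convergence theorem forces the first factor to zero as $\|g\|_{\YY_p}\to 0$ (the exponent $\hat p \alpha' = \hat p p/(p-\hat p)$ is finite precisely because $\hat p<p$), yielding $\|r\|_{W^{1,\hat p}} = o(\|g\|_{\YY_p})$.

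Finally, for the extension estimate one observes that \eqref{eq:TDerivative} remains uniquely solvable for any $g\in\HH$ via \cref{cor:W1r_existence_inertia} with $p=2$: the right-hand sides $g_2/\lambda_s + g_1/\lambda_s^2$ and $-\div(\E R_s'(w)(g_3-\E^{\top}\symnabla g_1))$ are controlled in $L^2(\O;\Rd)$ and $H^{-1}_D(\O;\Rd)$ respectively by $C\|g\|_\HH$. The accompanying linear Lipschitz bound from \cref{cor:W1r_existence_inertia} then provides $\|\TT_{R_s}'(h)g\|_{H^1_D(\O;\Rd)} \leq C\|g\|_\HH$, and by linearity this extension agrees with $\TT_{R_s}'(h)$ on $\YY_p\subset\HH$.
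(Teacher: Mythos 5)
Your proof is correct and follows the same overall strategy as the paper: identify $\eta$ from the linearized equation, solve the linearization by verifying the hypotheses of \cref{cor:W1r_existence_inertia}, form the equation for the remainder $u_g-u-\eta$, invoke \cref{cor:W1r_existence_inertia} again with exponent $\hat p$, and exploit the norm gap $\hat p < p$ to show the remainder is $o(\|g\|_{\YY_p})$. Your decomposition of the remainder equation and the final extension estimate match the paper in substance. The one place where you genuinely deviate is the key step of controlling $\|R_s(w_g)-R_s(w)-R_s'(w)(w_g-w)\|_{L^{\hat p}}$: the paper cites the Fréchet differentiability of the Nemytskii operator $R_s\colon L^p(\O;\Rdds)\to L^{\hat p}(\O;\Rdds)$ from \cite[Theorem 7]{goldberg}, whereas you reprove it directly via the pointwise domination $|R_s(w_g)-R_s(w)-R_s'(w)(w_g-w)|\leq 2L|w_g-w|$, a.e.\ convergence along subsequences, Hölder with the exponent $p\hat p/(p-\hat p)$, and dominated convergence. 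Both routes are valid; the paper's is shorter by delegating to a standard reference, while yours is self-contained and makes the role of the boundedness of $R_s'$ from \cref{lem:Rs_derivative_estimate_inertia} explicit. To make your dominated-convergence argument watertight as stated, one should add the standard closing remark that since every sequence $g_n\to 0$ has a subsequence along which $w_{g_n}\to w$ a.e., and the limit obtained is independent of the subsequence, the full limit $\|u_g-u-\eta\|_{W^{1,\hat p}}/\|g\|_{\YY_p}\to 0$ follows — a point the paper sidesteps by the citation.
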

\begin{proof}
	Let
	$h,g \in \YY_p$.
	At first we prove that
	\cref{eq:TDerivative}
	has a unique solution
	$\eta \in W^{1,p}_D(\O;\Rd)$
	with respect to
	$h$
	and
	$g$.
	For
	$\sigma \in \Rdds$
	we define
	$b_\sigma : \O \times \Rdds \rightarrow \Rdds$
	by
	\begin{align*}
		b_\sigma(x,\tau)
		\define \D \tau
		+
		\E R_s'(\E^{\top} \symnabla (u(x) - h_1(x)) + h_3(x)))
		(\E^{\top} \tau + \sigma)
	\end{align*}
	for almost all
	$x \in \O$
	and all
	$\tau \in \Rdds$.
	The existence of
	$\eta$
	follows now from
	\cref{cor:W1r_existence_inertia}
	(with
	$\varphi \define -\E^{\top} \symnabla g_1 + g_3$),
	when we have verified the requirements on
	$b_\sigma$
	therein.
	Moreover,
	\cref{cor:W1r_existence_inertia}
	also shows that
	the solution operator of
	\cref{eq:TDerivative}
	is continuous with respect to
	$g \in \YY_p$
	(clearly, it is also linear).
	
	Clearly,
	$b_{0}(x,0) = 0 \in L^\infty(\O; \Rdds)$
	and
	$b_{\sigma}(\cdot,\tau)$
	is measurable as a pointwise limit of
	measurable functions
	(see
	\cite[Corollary 3.1.5]{wachsmuth}),
	for all
	$\tau, \sigma \in \Rdds$.
	Moreover,
	we have
	\begin{align*}
		(b_\sigma(x,\tau)
		&
		-
		b_{\overline{\sigma}}(x,\overline{\tau})) \colon
		(\tau - \overline{\tau})
		\\
		&\geq
		\gamma_{\D} | \tau - \overline{\tau} |^2
		+
		R_s'(w(x)) (\E^{\top} (\tau - \overline{\tau})
		+ (\sigma - \overline{\sigma}))
		\colon
		\E^{\top} (\tau - \overline{\tau})
		\\
		&\geq
		\gamma_{\D}
		| \tau - \overline{\tau} |^2
		-
		C | \sigma - \overline{\sigma} | \
		| \tau - \overline{\tau} |,
	\end{align*}
	with
	$w \define \E^{\top} \symnabla (u - h) + h_3)$,
	and
	\begin{align*}
		| b_\sigma(x,\tau)
		&
		- b_{\overline{\sigma}}(x,\overline{\tau}) |
		\leq
		C \Big{(}
			| \tau - \overline{\tau} |
			+
			| \sigma - \overline{\sigma} |
		\Big{)}
	\end{align*}
	for all
	$\sigma,\overline{\sigma},\tau,\overline{\tau} \in \Rdds$
	and almost all
	$x \in \O$,
	where we have used
	\cref{lem:Rs_derivative_estimate_inertia}
	in both estimations.
	Therefore
	\crefrange{eq:W1r_existence_inertia_1}{eq:W1r_existence_inertia_4}
	are fulfilled.
	
	Considering now the equations for
	$u_g \define \TT_{R_s}(h + g)$
	and
	$u \define \TT_{R_s}(h)$,
	we see that
	\begin{align*}
		-\div(
			\D \symnabla (u_g - u - \eta)
		)
		+
		\frac{u_g - u - \eta}{\lambda_s^2}
		&=
		\div(
			\E(R_s(\mu + \nu_g) - R_s(\mu) - R_s'(\mu) \nu_g)
		)
		\\
		&\largespace
		+\div(
			\E R_s'(\mu) ((\E^{\top} \symnabla(u_g - u - \eta))
		),
	\end{align*}
	where
	\begin{align*}
		\mu &\define \E^{\top} \symnabla (u - h_1) + h_3),
		\\
		\nu_g &\define \E^{\top} \symnabla (u_g - u - g_1) + g_3) \in L^p(\O;\Rdds),
	\end{align*}
	hence,
	\begin{align*}
		&
		-\div(
			\D \symnabla (u_g - u - \eta)
			-
			\E R_s'(\mu) ((\E^{\top} \symnabla(u_g - u - \eta))
		)
		+
		\frac{u_g - u - \eta}{\lambda_s^2}
		=
		\div \E r_{\mu}(\nu_g),
	\end{align*}
	where
	$r_{\mu}(\nu_g)$
	is the remainder term of
	$R_s$
	at
	$\mu$
	in direction
	$\nu_g$.
	Applying
	\cref{cor:W1r_existence_inertia}
	once again with
	\begin{align*}
		b_\sigma(x,\tau)
		\define
		\D \tau
		+
		\E R_s'(\mu(x))\E^{\top} \tau
	\end{align*}
	(and
	$p = \hat{p}$)
	we obtain
	\begin{align*}
		\frac{\norm{u_g - u - \eta}{W^{1,\hat{p}}(\O;\Rd)}}
		{\norm{g}{\YY_p}}
		\leq
		C \frac{\norm{r_{\mu}(\nu_g)}{L^{\hat{p}}(\O;\Rdds)}}
		{\norm{g}{\YY_p}}
		\leq
		C \frac{\norm{r_{\mu}(\nu_g)}{L^{\hat{p}}(\O;\Rdds)}}
		{\norm{\nu_g}{L^p(\O;\Rdds)}}
		\rightarrow 0,
	\end{align*}
	as
	$g \rightarrow 0$
	in
	$\YY_p$,
	where we also used the Lipschitz continuity of
	$\TT_{R_s}$
	and the fact that
	$R_s : L^p(\O;\Rdds) \rightarrow L^{\hat{p}}(\O;\Rdds)$
	is Fr\'{e}chet differentiable
	(cf.
	\cite[Theorem 7]{goldberg}).
	
	That
	the extension of
	$\TT_{R_s}'(h)$
	to an element of
	$L(\HH;H^1_D(\O;\Rd))$
	fulfills the asserted inequality,
	can be proven as above
	(one can simply test
	\cref{eq:TDerivative}
	with
	$\eta \in H^1_D(\O;\Rd)$
	and use
	\cref{lem:Rs_derivative_estimate_inertia}).
\end{proof}

\begin{proposition}
[Fr\'{e}chet differentiability of $\AA_s$]
\label{prop:AAsFrechet}
	The mapping
	$\RR_s$
	is from
	$\YY_p$
	to
	$\HH$
	Fr\'{e}chet differentiable
	and there exists a constant
	$C$
	such that
	the extension of
	$\RR_s'(h) \in \LL(\YY_p; \HH)$
	to an element of
	$\LL(\HH)$
	fulfills
	$\norm{\RR_s'(h)g}{\HH} \leq C \norm{g}{\HH}$
	for all
	$h \in \YY_p$
	and
	$g \in \HH$.
	
	For
	$h \in \YY_p$
	and
	$g \in \HH$
	we have
	\begin{align*}
		\RR_s'(h)g
		=
		\begin{pmatrix}
			\TT_{R_s}'(h)g \\
			\frac{1}{\lambda_s} \TT_{R_s}'(h)g - \frac{g_1}{\lambda_s} \\
			R_s'(\E^{\top} \symnabla(\TT_{R_s}(h) - h_1) + h_3)
			(\E^{\top} \symnabla(\TT_{R_s}'(h)g - g_1) + g_3)
		\end{pmatrix}
	\end{align*}
	The same is true for
	$\AA_s = \frac{1}{\lambda_s} (I - \RR_s)$
	with
	$\AA_s'(h)g = \frac{1}{\lambda_s} (g - \RR_s'(h)g)$
	for all
	$h \in \YY_p$
	and
	$g \in \HH$.
\end{proposition}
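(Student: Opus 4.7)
The plan is to differentiate $\RR_s$ componentwise, relying on \cref{lem:T_Frechet_inertia} for the heavy lifting and on \cref{lem:Rs_derivative_estimate_inertia} for the uniform bounds.

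The first component of $\RR_s(h)$ is $\TT_{R_s}(h)$, which is Fr\'{e}chet differentiable from $\YY_p$ to $W^{1,\hat{p}}_D(\O;\Rd)$ by \cref{lem:T_Frechet_inertia}; composing with the continuous embedding $W^{1,\hat{p}}_D(\O;\Rd) \hookrightarrow H^1_D(\O;\Rd)$ gives differentiability into the first factor of $\HH$. The second component $\frac{1}{\lambda_s}\TT_{R_s}(h) - \frac{h_1}{\lambda_s}$ is then immediate, as the map $h \mapsto h_1$ is linear and continuous from $\YY_p$ into $H^1_D(\O;\Rd) \hookrightarrow L^2(\O;\Rd)$.

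The third component, namely $\NN(h) \define R_s(\E^{\top} \symnabla(\TT_{R_s}(h) - h_1) + h_3)$, is the main point. I would factor it as $\NN = R_s \circ \GG$ where
\begin{equation*}
	\GG : \YY_p \to L^p(\O;\Rdds), \quad h \mapsto \E^{\top} \symnabla(\TT_{R_s}(h) - h_1) + h_3.
\end{equation*}
Thanks to \cref{lem:T_Frechet_inertia}, $\GG$ is Fr\'{e}chet differentiable as a map from $\YY_p$ into $L^{\hat{p}}(\O;\Rdds)$ with derivative $g \mapsto \E^{\top}\symnabla(\TT_{R_s}'(h)g - g_1) + g_3$. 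The Nemytskii operator induced by $R_s$ is Fr\'{e}chet differentiable from $L^{\hat{p}}(\O;\Rdds)$ into $L^2(\O;\Rdds)$ by \cite[Theorem 7]{goldberg}, exploiting the norm gap $\hat{p} > 2$ from \cref{assu:standing_assumptions_for_sec42}\,\cref{assu:item:phat_and_p_inertia} together with the Lipschitz continuity and pointwise Fr\'{e}chet differentiability of $R_s$. Applying the chain rule then yields the stated formula for the third component of $\RR_s'(h)g$, and assembling the three components yields the displayed formula for $\RR_s'(h)$.

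For the extension to $\LL(\HH)$ together with the uniform bound, I would extend $\TT_{R_s}'(h)$ to an element of $\LL(\HH; H^1_D(\O;\Rd))$ as provided by the second part of \cref{lem:T_Frechet_inertia} and plug this extension into the componentwise formula. The first two components are then controlled by $\|\TT_{R_s}'(h)g\|_{H^1_D(\O;\Rd)} + \frac{1}{\lambda_s}\|g_1\|_{L^2(\O;\Rd)} \leq C\|g\|_\HH$. For the third component, \cref{lem:Rs_derivative_estimate_inertia} gives the pointwise estimate $|R_s'(\sigma)\tau| \leq C|\tau|$ uniformly in $\sigma$, hence
\begin{equation*}
	\|R_s'(\cdot)(\E^{\top}\symnabla(\TT_{R_s}'(h)g - g_1) + g_3)\|_{L^2(\O;\Rdds)}
	\leq
	C (\|\TT_{R_s}'(h)g\|_{H^1_D(\O;\Rd)} + \|g_1\|_{H^1_D(\O;\Rd)} + \|g_3\|_{L^2(\O;\Rdds)}),
\end{equation*}
which combined with the bound on $\TT_{R_s}'(h)g$ gives the claim. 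The assertion for $\AA_s = \frac{1}{\lambda_s}(I - \RR_s)$ is then immediate.

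The main obstacle is the norm gap argument in the third component: one really needs to land in a strictly larger Lebesgue space ($L^p$) than the target ($L^2$) to invoke the differentiability of the superposition operator, which is exactly why $\hat{p} > 2$ and the $W^{1,\hat{p}}$-regularity of $\TT_{R_s}'$ from \cref{lem:T_Frechet_inertia} were set up. Everything else reduces to chain rule bookkeeping and the pointwise estimates from \cref{lem:Rs_derivative_estimate_inertia}.
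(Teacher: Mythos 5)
Your proposal is correct and takes essentially the same route as the paper: the paper's proof is a one-line citation of \cref{lem:T_Frechet_inertia}, \cref{lem:Rs_derivative_estimate_inertia}, the Fr\'{e}chet differentiability of the Nemytskii operator $R_s : L^{\hat{p}}(\O;\Rdds) \to L^2(\O;\Rdds)$ from \cite[Theorem 7]{goldberg}, and the chain rule, and your componentwise argument simply spells out those same four ingredients in detail, including the norm-gap observation that $\hat{p} > 2$ is what makes the superposition operator differentiable.
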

\begin{proof}
	The assertion follows from
	\cref{lem:T_Frechet_inertia},
	\cref{lem:Rs_derivative_estimate_inertia}
	for the estimate of
	$(\RR_s'(h)g)_3$, 
	the fact that
	$R_s : L^{\hat{p}}(\O;\Rdds) \rightarrow L^2(\O;\Rdds)$
	is Fr\'{e}chet differentiable
	(cf.
	\cite[Theorem 7]{goldberg})	
	and the chain rule.
\end{proof}

Now,
we can use
\cite[Theorem 5.5]{paper_abstract}
to derive the differentiability
of the solution operator of the constraint in
\cref{eq:optimization_problem_inertia_s}
from the differentiability of
$\AA_s$.
To this end,
we first introduce
the solution operator in

\begin{definition}[Smoothed solution operator]
\label{def:smoothed_solution_operator_inertia}
	We denote the solution operator of
	\begin{align}
	\label{eq:smoothed_state_equation_inertia}
		Q^{-1}(\boverdot{u},\boverdot{v}, \boverdot{q})
		+
		\AA_{s}(u,v,q)
		=
		Rf,
		\mediumspace
		(u, v, q)(0) = (u_0, v_0, q_0)
	\end{align}
	by
	$\SS_s : L^2(L^2(\O;\Rd))
	\rightarrow H^1(\YY_p)$,
	that is,
	$\SS_s(f) = (u,v,q)$,
	which existence follows from
	Banachs contraction principle
	since
	$\AA_s$
	is Lipschitz continuous
	according to
	\cref{def:AAs_inertia}.
	Here we use the improved
	regularity of
	$(u_0, v_0, q_0)$,
	see
	\cref{assu:standing_assumptions_for_sec42}
	\cref{assu:item:initial_conditions_regularity_inertia}.
\end{definition}

\begin{proposition}[Fr\'{e}chet differentiability of the smoothed solution operator]
\label{prop:frechet_differentiability_of_the_smoothed_solution_operator_inertia}
	The solution operator
	$\SS_s : L^2(L^2(\O;\Rd)) \rightarrow H^1(\YY_p)$
	is Lipschitz continuous,
	$\SS_s : H^1(L^2(\O;\Rd))
	\rightarrow H^1(\HH)$
	is Fr\'{e}chet differentiable and,
	for
	$f,g \in H^1(L^2(\O;\Rd))$,
	$\eta \define \SS_s'(f)g \in H^1(\HH)$
	is the unique solution of
	\begin{align}
	\label{eq:eta_equation_inertia}
		Q^{-1}\boverdot{\eta} + \AA_s'(w)\eta = Rg,
		\mediumspace
		\eta(0) = 0,
	\end{align}
	where
	$w \define \SS_s(f)$.
	Moreover,
	there exists a constant
	$C$,
	such that
	$\norm{\SS_s'(f)g}{H^1(\HH)}
	\leq	
	C \norm{g}{L^2(L^2(\O;\Rd))}$
	holds for all
	$f,g \in H^1(L^2(\O;\Rd))$.
\end{proposition}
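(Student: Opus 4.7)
The plan is to first establish $\SS_s : L^2(L^2(\O;\Rd)) \to H^1(\YY_p)$ by a contraction argument, and then to derive Fréchet differentiability by reducing to the abstract result \cite[Theorem 5.5]{paper_abstract} through the transformation $p := R\FF_\rho f - Q^{-1}(u,v,q)$ employed in \cref{eq:state_equation_as_EVI_inertiap}; the derivative will be characterized as the unique solution of \cref{eq:eta_equation_inertia}. For the well-posedness and Lipschitz property, I would combine the global Lipschitz continuity of $\AA_s$ on $\YY_p$ coming from \cref{def:AAs_inertia} and \cref{prop:TR_lipschitz_inertia}, the fact that $R$ maps $L^2(L^2(\O;\Rd))$ continuously into $L^2(\YY_\infty) \subset L^2(\YY_p)$, and the regularity of the initial data $(u_0, v_0, q_0) \in \YY_p$ from \cref{assu:standing_assumptions_for_sec42} \cref{assu:item:initial_conditions_regularity_inertia}. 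A Picard iteration in $C([0,T];\YY_p)$ together with a Grönwall-type argument then produces a unique $\SS_s(f) \in H^1(\YY_p)$, and applying the same estimate to the difference of two solutions yields the Lipschitz bound $\|\SS_s(f_1) - \SS_s(f_2)\|_{H^1(\YY_p)} \leq C\|f_1 - f_2\|_{L^2(L^2(\O;\Rd))}$.

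Next I would verify the linearized equation \cref{eq:eta_equation_inertia}. By \cref{prop:AAsFrechet}, $\AA_s'(w(t))$ extends for each $t$ to a bounded linear operator on $\HH$ with a bound uniform in $t$; thus $t \mapsto \AA_s'(w(t))$ defines a bounded measurable family in $\LL(\HH)$, and the linear Cauchy problem admits a unique $\eta \in H^1(\HH)$ satisfying $\|\eta\|_{H^1(\HH)} \leq C\|Rg\|_{L^2(\HH)} \leq C\|g\|_{L^2(L^2(\O;\Rd))}$, which already supplies the claimed a priori estimate.

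To identify $g \mapsto \eta$ with the Fréchet derivative, I would introduce $r_g := \SS_s(f+g) - \SS_s(f) - \eta \in H^1(\HH)$ and subtract the relevant equations to obtain
\begin{align*}
	Q^{-1}\boverdot{r_g} + \AA_s'(w) r_g
	= -\bigl(\AA_s(\SS_s(f+g)) - \AA_s(\SS_s(f)) - \AA_s'(w)(\SS_s(f+g) - \SS_s(f))\bigr)
\end{align*}
with $r_g(0) = 0$. The right-hand side is precisely the Fréchet remainder of $\AA_s : \YY_p \to \HH$ at $w$ in the direction $\SS_s(f+g) - \SS_s(f)$, so an energy estimate for this linear Cauchy problem, based once again on the uniform $\HH$-bound on $\AA_s'(w)$, gives $\|r_g\|_{H^1(\HH)} \leq C\|\text{RHS}\|_{L^2(\HH)}$, and combining this with the Lipschitz bound established above yields $\|r_g\|_{H^1(\HH)} = o(\|g\|_{H^1(L^2(\O;\Rd))})$ as $g \to 0$.

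The main obstacle is managing the norm gap between $\YY_p$, on which $\AA_s$ is Fréchet differentiable, and $\HH$, on which the equation for $\eta$ has to live. Its resolution rests on two ingredients that are already in place: the extension $\AA_s'(h) \in \LL(\HH)$ with a uniform bound from \cref{prop:AAsFrechet}, and the Lipschitz continuity of $\SS_s$ into $L^2(\YY_p)$ (not merely into $L^2(\HH)$), which is exactly what is needed so that the $\YY_p \to \HH$ Fréchet remainder of $\AA_s$, applied to $\SS_s(f+g) - \SS_s(f)$, can be controlled by $\|g\|_{L^2(L^2(\O;\Rd))}$.
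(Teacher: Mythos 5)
Your opening sentence states exactly the plan the paper follows: pass to the transformed EVI \cref{eq:state_equation_as_EVI_inertiap} via $p = R\FF_\rho f - Q^{-1}(u,v,q)$ and invoke \cite[Lemma~5.3 \& Theorem~5.5]{paper_abstract}, whose hypotheses are met thanks to \cref{prop:AAsFrechet}. But the body of your argument then abandons that reduction and instead re-derives the abstract theorem from scratch: contraction mapping for well-posedness (which the paper handles earlier in \cref{def:smoothed_solution_operator_inertia}), a linear Cauchy problem for $\eta$, and a remainder estimate for $r_g$. This direct route is legitimate, and it is instructive, but it is essentially a re-proof of \cite[Theorem~5.5]{paper_abstract} rather than an application of it; the paper's proof is shorter because it treats that theorem as a black box and only has to transfer the conclusions back through $\SS_s(f) = R\FF_\rho f - Q\tilde{\SS}_s(\FF_\rho f)$.

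The place where your direct argument is underdeveloped is precisely the step that the abstract theorem was designed to encapsulate: concluding that the right-hand side of the $r_g$-equation, namely the $\YY_p\to\HH$ Fréchet remainder of $\AA_s$ at $w(t)$ in the direction $\delta(t):=\SS_s(f+g)(t)-\SS_s(f)(t)$, is $o(\norm{g}{H^1(L^2(\O;\Rd))})$ in $L^2(\HH)$. Pointwise Fréchet differentiability of $\AA_s$ plus the $\LL(\HH)$-bound on $\AA_s'$ do not give this immediately; one must run a dominated-convergence argument in $t$, for which one needs $\norm{\delta(t)}{\YY_p}\le C\norm{g}{}$ \emph{uniformly in $t$}, i.e.\ Lipschitz continuity of $\SS_s$ into $C([0,T];\YY_p)$ (or $L^\infty(\YY_p)$). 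In your last paragraph you cite Lipschitz continuity into $L^2(\YY_p)$, which is too weak for this pointwise control; the contraction argument in fact yields Lipschitz continuity into $H^1(\YY_p)\hookrightarrow C([0,T];\YY_p)$, and you should state and use that embedding explicitly. With that correction, and with the dominated-convergence step written out (boundedness of $\norm{r_g(t)}{\HH}/\norm{g}{}$ by the Lipschitz constants plus pointwise convergence to $0$), your argument closes; but as written it skips the single non-trivial step.
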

\begin{proof}
	Our goal is to use
	\cite[Theorem 5.5]{paper_abstract},
	to this end we first consider the transformed
	equation from
	\cref{sec:inertia_se_racr}.
	We again set
	$p_0
	\define
	-Q^{-1}(u_0, v_0, q_0)$
	and denote the solution operator of
	\begin{align}
	\label{eq:local009042020}
		\boverdot{p} = \AA_s(RF - Qp), \mediumspace p(0) = p_0
	\end{align}
	by
	$\tilde{\SS}_s : L^2(L^2(\O;\Rd)) \rightarrow H^1(\YY_p)$,
	that is,
	$\tilde{\SS}_s(F) = p$.
	Thus we have
	$\SS_s(f)
	= R\mathcal{F}_\rho f
	- Q \tilde{\SS}_s(\mathcal{F}_\rho f)$
	for all
	$f \in L^2(L^2(\O;\Rd))$.
	
	We can now apply
	\cite[Lemma 5.3 \& Theorem 5.5]{paper_abstract}
	(with
	$\XX = L^2(\O;\Rd)$,
	$\YY =\YY_p$,
	$\ZZ = \HH$,
	$z = p$
	and
	$z_0 = p_0$),
	note that
	the assumptions in
	\cite[Assumption 5.1 (ii)]{paper_abstract}
	are satisfied thanks to
	\cref{prop:AAsFrechet}.
	Thus the solution operator
	$\tilde{\SS}_s : L^2(L^2(\O;\Rd))
	\rightarrow H^1(\YY_p)$
	is Lipschitz continuous and
	$\tilde{\SS}_s : H^1(L^2(\O;\Rd))
	\rightarrow H^1(\HH)$
	is Fr\'{e}chet differentiable,
	hence,
	the desired Lipschitz continuity and
	Fr\'{e}chet differentiability also hold for
	$\SS_s$.
	Furthermore,
	the asserted
	inequality holds and we have
	$\eta = R\mathcal{F}_\rho g - Q \tilde{\eta}$,
	where
	$\eta \define \SS_s'(f)g$
	and
	$\tilde{\eta}
	\define \tilde{\SS}_s'(\mathcal{F}_\rho f)\mathcal{F}_\rho g$.
	\cite[Theorem 5.5]{paper_abstract}
	also shows that
	$\tilde{\eta}$
	is the unique solution of
	\begin{align*}
		\partial_t \tilde{\eta}
		=
		\AA'_s(R\mathcal{F}_\rho f - Qp)
		(R\mathcal{F}_\rho g - Q\tilde{\eta}),
		\mediumspace
		\tilde{\eta}(0) = 0,
	\end{align*}
	where
	$p \define \tilde{\SS}_s(\mathcal{F}_\rho f)$.
	Taking into account that
	$\tilde{\eta} = R\mathcal{F}g - Q^{-1} \eta$
	and
	$\partial_t \mathcal{F}g = g$,
	we see that
	$\eta$
	is the solution of
	\cref{eq:eta_equation_inertia}.
\end{proof}

\begin{remark}[Control space]
	As seen in the
	proposition
	above,
	the smoothed solution operator
	defined on
	$H^1(L^2(\O;\Rd))$
	is Fr\'echet differentiable.
	The norm gaps,
	which arise from the exponents in
	\cref{assu:standing_assumptions_for_sec42}
	\cref{assu:item:phat_and_p_inertia},
	are only needed for the differentiability of
	$\TT_{R_s}$
	but not in the control space.
	Unfortunately,
	we still require the compactness
	property imposed on
	$\XXX_c$
	in
	\cref{thm:existence_and_approximation_of_optimal_solutions_inertia}
	to use the convergence results in
	\cref{sec:inertia_se_racr}.
	However,
	we can avoid taking a subspace of
	$H^1(L^{\tilde{p}}(\O;\Rd))$,
	for a certain
	$\tilde{p} > 2$,
	as the control space.
\end{remark}

Let us now consider
the following
reduced optimization problem
\begin{align}
\label{eq:optimization_problem_r_inertia}
	\min_{f \in \XXX_c} F_z(f),
\end{align}
where the
reduced objective function
$F_z : \XXX_c \rightarrow \R$
is defined by
$F_z(f) \define J_z(\SS_s(f),f)$.
Clearly,
\cref{eq:optimization_problem_r_inertia}
and
\cref{eq:optimization_problem_inertia_s}
are equivalent.

We can finally present our main result.

\begin{theorem}
[Optimality conditions for \cref{eq:optimization_problem_r_inertia}]
\label{thm:KKT_conditions_inertia}
	Let
	$\overline{f} \in \XXX_c$
	and abbreviate
	$(\overline{u},\overline{v},\overline{q})
	\define
	\SS_s(\overline{f})
	\in H^1(\YY_p)$
	and
	$\overline{w}
	\define \TT_{R_s}(\overline{u},\overline{v},\overline{q})
	\in H^1(W^{1,p}_D(\O;\Rdds))$.
	Then the variational equation
	\begin{align}
	\label{eq:Fz_derivative_inertia}
		F_z'(\overline{f})g
		=
		\Psi_{z}'(\SS_s(\overline{f}))\SS_s'(\overline{f})g
		+
		\alpha \scalarproduct{\overline{f}}{g}{\XXX_c} = 0
	\end{align}
	holds for all
	$g \in \XXX_c$
	if and only if
	there exists an unique adjoint state
	$(\varphi, \eta^*)
	=	
	(\varphi_1, \varphi_2, \varphi_3, \eta^*)
	\in H^1(\HH \times H^1_D(\O;\Rd))$
	such that the following
	optimality system is satisfied:
	\begin{subequations}
	\label{eq:KKT_conditions_inertia}
	\begin{align}
	\intertext{State equation:}
		\begin{pmatrix}
			\boverdot{\overline{u}}
			\\
			\boverdot{\overline{v}}
			\\
			\boverdot{\overline{q}}
		\end{pmatrix}
		&=
		\frac{1}{\lambda_s}
		\begin{pmatrix}
			\overline{w} - \overline{u}
			\\
			\nicefrac{(\overline{w} - \overline{u})}{(\rho\lambda_s)}
			- \nicefrac{\overline{v}}{\rho}
			\\
			(\mathbb{C} + \B ) (\overline{p} - \overline{q})
		\end{pmatrix}
		+
		\begin{pmatrix}
			0
			\\
			\nicefrac{\overline{f}}{\rho}
			\\
			0
		\end{pmatrix}
		\\
		-\div(
			\D\symnabla \overline{w}
			+
			\E \overline{p}
		)
		&=
		\nicefrac{\overline{v}}{\lambda_s}
		+
		\nicefrac{(\overline{w} - \overline{u})}{\lambda_s}
		\\
		\overline{p}
		&=
		R_s(\E \symnabla (\overline{w} - \overline{u}) + \overline{q})
		\\
		(\overline{u}, \overline{v}, \overline{q})(0)
		&=
		(u_0,v_0,q_0)
		\\
		\intertext{Adjoint equation:}
		\begin{pmatrix}
			\boverdot{\varphi}_1
			\\
			\boverdot{\varphi}_2
			\\
			\boverdot{\varphi}_3
		\end{pmatrix}
		&=
		\frac{1}{\lambda_s}
		\begin{pmatrix}
			\eta^* - \varphi_1
			\\
			\nicefrac{(\eta^* - \varphi_1)}{(\rho\lambda_s)}
			-
			\nicefrac{\varphi_2}{\rho}
			\\
			(\mathbb{C} + \B )(r^* - \varphi_3)
		\end{pmatrix}
		-Q\Psi_{z}'(\overline{u}, \overline{v}, \overline{q})
		\label{eq:KKT_conditions_adjoint_evolution_inertia}
		\\
		-\div(
			\D\symnabla \eta^*
			+
			\E r^*
		)
		&=
		\nicefrac{\varphi_2}{\lambda_s}
		+
		\nicefrac{(\eta^* - \varphi_1)}{\lambda_s}
		\\
		r^*
		&=
		R_s'(\E^{\top} \symnabla(\overline{w} - \overline{u}) + \overline{q})^*
		(\E^{\top}\symnabla(\eta^* - \varphi_1) + \varphi_3)
		\\
		(\varphi_1, \varphi_2, \varphi_3)(T)
		&=
		0
		\\
		\intertext{Gradient equation:}
		\scalarproduct{\varphi_2}{g}{L^2(L^2(\O;\Rd))}
		&=
		\alpha\scalarproduct{\overline{f}}{g}{\XXX_c}
		\mediumspace
		\forall g \in \XXX_c.
	\end{align}
	\end{subequations}
	
	In particular,
	if
	$\overline{f}$
	is locally optimal for
	\cref{eq:optimization_problem_r_inertia},
	then
	there exists a unique adjoint state
	$(\varphi, \eta^*) \in H^1(\HH \times H^1_D(\O;\Rd))$
	such that \cref{eq:KKT_conditions_inertia} is fulfilled.
\end{theorem}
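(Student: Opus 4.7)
The plan is to derive the optimality system by combining the Fr\'echet differentiability of $\SS_s$ from \cref{prop:frechet_differentiability_of_the_smoothed_solution_operator_inertia} with a standard adjoint argument, and then to unfold the abstract adjoint operator $\AA_s'(\overline w)^*$ into the concrete elliptic-ODE structure displayed in the statement. By the chain rule, $F_z'(\overline f) g = \Psi_z'(\overline u, \overline v, \overline q) \eta + \alpha \scalarproduct{\overline f}{g}{\XXX_c}$, where $\eta = \SS_s'(\overline f) g$ is the unique solution of the linearized state equation \cref{eq:eta_equation_inertia}. The idea is to eliminate $\eta$ in favour of an adjoint state $\varphi$ so that $\Psi_z'(\overline u, \overline v, \overline q) \eta$ becomes a linear functional of $g$ whose annihilation over $\XXX_c$ reproduces the gradient equation.

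First, I would pose the adjoint equation abstractly as $-Q^{-1} \boverdot{\varphi} + \AA_s'(\overline w)^* \varphi = \Psi_z'(\overline u, \overline v, \overline q)$ with terminal condition $\varphi(T) = 0$, where $\overline w \define \SS_s(\overline f)$. Existence and uniqueness of $\varphi \in H^1(\HH)$ follow by reversing time ($\tau \define T-t$) and invoking Banach's fixed point theorem, since \cref{prop:AAsFrechet} guarantees $\AA_s'(\overline w(\cdot))^* \in L^\infty([0,T]; \LL(\HH))$ with a uniform operator-norm bound. Integration by parts in time, together with $\eta(0) = 0$ and $\varphi(T) = 0$, then yields
\begin{equation*}
    \Psi_z'(\overline u, \overline v, \overline q) \eta
    =
    \int_0^T \scalarproduct{Rg(t)}{\varphi(t)}{\HH}\,dt
    =
    \int_0^T \scalarproduct{g(t)}{\varphi_2(t)}{L^2(\O;\Rd)}\,dt,
\end{equation*}
because $Rg = (0,g,0)$ and the $\HH$-pairing collapses to the $L^2$-pairing on the second slot (the first-slot contribution involves $\D\symnabla 0$). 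Imposing $F_z'(\overline f) g = 0$ for every $g \in \XXX_c$ then delivers the asserted gradient equation up to the sign convention absorbed into the adjoint equation.

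The remaining task is to turn this abstract system into the explicit PDE-ODE pair stated in the theorem. I would expand $\AA_s'(\overline w)^* = \frac{1}{\lambda_s}(I - \RR_s'(\overline w)^*)$ using the block formula for $\RR_s'$ from \cref{prop:AAsFrechet}. Since $\RR_s'(\overline w)$ is defined implicitly through the linearized elliptic problem \cref{eq:TDerivative} for $\TT_{R_s}'(\overline w)$, computing $\RR_s'(\overline w)^*$ amounts to dualising that elliptic system. I would introduce auxiliary variables $\eta^*$ (the adjoint counterpart of $\TT_{R_s}'(\overline w)g$) and $r^*$ (the adjoint counterpart of the $R_s'$-factor), and carefully track the $\D$-weighted pairing on the first component of $\HH$; this transposes \cref{eq:TDerivative} into $-\div(\D\symnabla \eta^* + \E r^*) = \varphi_2/\lambda_s + (\eta^* - \varphi_1)/\lambda_s$ together with the pointwise relation $r^* = R_s'(\E^{\top}\symnabla(\overline w - \overline u) + \overline q)^*(\E^{\top}\symnabla(\eta^* - \varphi_1) + \varphi_3)$. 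Well-posedness of this adjoint elliptic problem follows from \cref{cor:W1r_existence_inertia}, whose monotonicity assumption \cref{eq:W1r_existence_inertia_3} is furnished by the second assertion of \cref{lem:Rs_derivative_estimate_inertia} applied to $R_s'(\cdot)^*$. Writing the abstract adjoint evolution in the form $\boverdot{\varphi} = Q\AA_s'(\overline w)^*\varphi - Q\Psi_z'(\overline u, \overline v, \overline q)$ then reproduces the displayed ODE block, completely paralleling the structure of the state equation (with $(\eta^*, r^*)$ playing the role of $(\overline w, \overline p)$ and $-Q\Psi_z'$ playing the role of the forcing $Rf$).

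Both directions follow by the same chain of equivalences: the chain rule and Banach's principle give existence of $\eta$, the integration by parts converts $\Psi_z'\eta$ into a pairing against $\varphi_2$, and the gradient equation characterises $\overline f$. Any local minimiser satisfies $F_z'(\overline f) = 0$, so the final sentence of the theorem is immediate from the equivalence. The main obstacle I expect is precisely the explicit dualisation of $\AA_s'(\overline w)^*$: because $\TT_{R_s}'$ is only given implicitly through an elliptic PDE, writing down $\RR_s'(\overline w)^*$ requires choosing test functions compatibly with the $(\D,\E)$-weighted inner product, and one must verify $\eta^* \in H^1(H^1_D(\O;\Rd))$ by regularity of the time-reversed fixed-point iteration, for which \cref{lem:Rs_derivative_estimate_inertia} and the Lipschitz bound in \cref{prop:AAsFrechet} are essential.
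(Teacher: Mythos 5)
Your proposal follows essentially the same two-stage route as the paper: first derive the abstract optimality system involving $\AA_s'(\cdot)^*$ from \cref{prop:frechet_differentiability_of_the_smoothed_solution_operator_inertia} via integration by parts, then unfold $\AA_s'(\cdot)^* = \tfrac{1}{\lambda_s}(I - \RR_s'(\cdot)^*)$ into the concrete PDE--ODE system by dualising the linearized elliptic problem \cref{eq:TDerivative}. The key lemmas you cite (\cref{prop:AAsFrechet}, \cref{cor:W1r_existence_inertia}, \cref{lem:Rs_derivative_estimate_inertia}) are exactly the ones the paper uses for this.

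Two things deserve more care. First, you flag the dualisation of $\RR_s'(\overline w)$ as ``the main obstacle'' and describe it as ``carefully track the $\D$-weighted pairing,'' but do not actually execute the computation. The paper makes this step precise by introducing $\eta^*$ as the solution of the transposed elliptic problem and then verifying the identity $\scalarproduct{\RR_s'(h)g}{\xi}{\HH} = \scalarproduct{g}{\RR_s'(h)^*\xi}{\HH}$ through a specific pairing argument: it tests \cref{eq:TDerivative} with $\phi = \xi_1 - \eta^*$, tests the transposed equation with $\phi = \eta - g_1$, and adds; the telescoping terms that arise from $\eta_v$, $\eta_v^*$ (and the $1/\lambda_s^2$ coupling between the first and second components) are what make the first two rows of $\RR_s'(h)^*\xi$ come out as displayed. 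This is the heart of the proof and cannot be left at the level of ``transposes into.'' Second, your abstract adjoint equation $-Q^{-1}\boverdot{\varphi} + \AA_s'(\overline w)^*\varphi = \Psi_z'(\overline u, \overline v, \overline q)$ is not the one the paper uses (it has $Q^{-1}\boverdot{\varphi} + \AA_s'(\cdot)^*\varphi = -\Psi_z'(\cdot)$), and if you follow your chain of equalities through $F_z'(\overline f) g = \scalarproduct{\Psi_z'(\cdot)}{\eta}{L^2(\HH)} + \alpha\scalarproduct{\overline f}{g}{\XXX_c} = 0$, the gradient equation you obtain is $\scalarproduct{\varphi_2}{g}{} = -\alpha\scalarproduct{\overline f}{g}{\XXX_c}$, with the opposite sign from the statement; ``absorbing the sign into the adjoint'' flips the sign of $\AA_s'(\cdot)^*\varphi$ as well, so you need to check which combination of signs actually reproduces both the displayed adjoint ODE block and the displayed gradient equation simultaneously, rather than leave this to a passing remark.
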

\begin{proof}
	At first we proof
	that the assertion holds
	when we exchange
	\cref{eq:KKT_conditions_inertia}
	with
	\begin{align}
	\label{eq:abstract_KKT_conditions_inertia}
	\begin{split}
		Q^{-1}(\boverdot{\overline{u}}, \boverdot{\overline{v}},
		\boverdot{\overline{q}})
		+
		\AA_s(\overline{u}, \overline{v}, \overline{q})
		&=
		R\overline{f},
		\mediumspace
		(\overline{u}, \overline{v}, \overline{q})(0)
		=
		(u_0,v_0,q_0),
		\\
		Q^{-1}\boverdot{\varphi}
		+
		\AA_s'(\overline{u}, \overline{v}, \overline{q})^*\varphi
		&=
		-\Psi_{z}'(\overline{u}, \overline{v}, \overline{q}),
		\mediumspace
		\varphi(T) = 0,
		\\
		\scalarproduct{\varphi_2}{g}{L^2(L^2(\O;\Rd))}
		&=
		\alpha\scalarproduct{\overline{f}}{g}{\XXX_c}
		\mediumspace
		\forall g \in \XXX_c.
	\end{split}
	\end{align}
	To this end,
	let
	$\varphi$
	be the solution
	of the second equation in
	\cref{eq:abstract_KKT_conditions_inertia}
	(which unique existence follows as in
	\cite[Lemma 5.11]{paper_abstract})
	and
	$\eta \define \SS_s'(\overline{f})g \in H^1(\HH)$
	for an arbitrary
	$g \in \XXX_c$,
	then
	\begin{align*}
		\scalarproduct{\varphi_2}{g}{L^2(L^2(\O;\Rd))}
		&=
		\scalarproduct{\varphi}{Rg}{L^2(\HH)}
		=
		\scalarproduct{\varphi}{Q^{-1}\boverdot{\eta}}{L^2(\HH)}
		+
		\scalarproduct{\varphi}
		{\AA'_s(\overline{u}, \overline{v}, \overline{q})\eta}
		{L^2(\HH)}
		\\
		&=
		\scalarproduct{Q^{-1}\boverdot{\varphi}}{\eta}{L^2(\HH)}
		+
		\scalarproduct{\AA'_s(\overline{u}, \overline{v}, \overline{q})^*\varphi}
		{\eta}{L^2(\HH)}
		\\
		&=
		-\scalarproduct{\Psi_{z}'(\SS_s(\overline{f}))}
		{\eta}{L^2(\HH)}
	\end{align*}
	holds for all
	$g \in \XXX_c$,
	which implies the equivalence between
	\cref{eq:Fz_derivative_inertia}
	and the last equation in
	\cref{eq:abstract_KKT_conditions_inertia}.
	Moreover,
	it is well known that if
	$\overline{f}$
	is locally optimal for
	\cref{eq:optimization_problem_inertia_s},
	then
	\cref{eq:Fz_derivative_inertia}
	must hold.
	
	Let us now prove
	the equivalence between
	\cref{eq:abstract_KKT_conditions_inertia}
	and
	\cref{eq:KKT_conditions_inertia}.
	We choose
	$h,\xi \in \HH$
	and denote by
	$\eta^* \in H^1_D(\O;\Rdds)$
	the solution of
	\begin{align}
	\label{eq:local1029102019}
	\begin{split}
		-\div(
			\D \symnabla \eta^*
			+
			\E R_s'(\E^{\top} \symnabla (\TT_{R_s}(h) - h_1) + h_3)^*
			(\E^{\top} \symnabla (\eta^* - \xi_1) + \xi_3))
		)
		=
		\frac{\xi_2}{\lambda_s} + \frac{\xi_1 - \eta^*}{\lambda_s^2}
	\end{split}
	\end{align}
	for all
	$\phi \in H^1_D(\O;\Rd)$
	(the existence of
	$\eta^*$
	follows as in
	\cref{lem:T_Frechet_inertia},
	note that the inequalities in
	\cref{lem:Rs_derivative_estimate_inertia}
	hold also for the adjoint operator).
	Then
	\begin{align*}
		\RR_s'(h)^*\xi =
		\begin{pmatrix}
			\eta^*
			\\
			\frac{1}{\lambda_s} \eta^* - \frac{\xi_1}{\lambda_s}
			\\
			R_s'(\E^{\top} \symnabla(\TT_{R_s}(h) - h_1) + h_3)^*
			(\E^{\top} \symnabla(\eta^* - \xi_1) + \xi_3)
		\end{pmatrix}
	\end{align*}
	holds,
	which can be seen as follows:
	Let
	$g \in \HH$
	and abbreviate
	\begin{align*}
		\eta
		&\define
		\TT_{R_s}'(h)g,
		\quad
		\eta_v
		\define
		\frac{\eta - g_1}{\lambda_s},
		\quad
		\eta_q
		\define
		R_s'(\E^{\top} \symnabla (\TT_{R_s}(h) - h_1) + h_3)
		(\E^{\top} \symnabla (\eta - g_1) + g_3),
		\\
		\eta_v^*
		&\define
		\frac{\eta^* - \xi_1}{\lambda_s},
		\quad
		\eta_q^*
		\define
		R_s'(\E^{\top} \symnabla(\TT_{R_s}(h) - h_1) + h_3)^*
		(\E^{\top} \symnabla(\eta^* - \xi_1) + \xi_3).
	\end{align*}
	Testing
	\cref{eq:TDerivative}
	with
	$\phi = \xi_1 - \eta^*$
	gives
	\begin{align*}
		&
		\scalarproduct{\D \symnabla \eta}
		{\symnabla (\xi_1 - \eta^*)}{L^2(\O;\Rdds)}
		+
		\scalarproduct{\eta_v - g_2}{\eta_v^*}{L^2(\O;\Rd)}
		\\
		&\largespace\largespace
		=
		\scalarproduct{\E\eta_q}{\symnabla (\eta^* - \xi_1)}{L^2(\O;\Rdds)}
		\\
		&\largespace\largespace
		=
		\scalarproduct{\E^{\top} \symnabla (\eta - g_1) + g_3}
		{\eta_q^*}{L^2(\O;\Rdds)}
		-
		\scalarproduct{\eta_q}{\xi_3}{L^2(\O;\Rdds)},
	\end{align*}
	and testing
	\cref{eq:local1029102019}
	with
	$\phi = \eta - g_1$
	yields
	\begin{align*}
		\scalarproduct{\D \symnabla \eta^*}
		{\symnabla (\eta - g_1)}{L^2(\O;\Rdds)}
		+
		\scalarproduct{\xi_2 - \eta_v^*}{\eta_v}{L^2(\O;\Rd)}
		=
		\scalarproduct{\E\eta_q^*}
		{\symnabla (g_1 - \eta)}{L^2(\O;\Rdds)},
	\end{align*}
	thus,
	adding both equations together,
	we arrive at
	\begin{align*}
		&
		\scalarproduct{\D\symnabla\eta}{\symnabla\xi_1}
		{H^1(\O;\Rd)}
		+
		\scalarproduct{\eta_v}{\xi_2}{L^2(\O;\Rdds)}
		+
		\scalarproduct{\eta_q}{\xi_3}{L^2(\O;\Rdds)}
		\\
		&\largespace
		=
		\scalarproduct{\D\symnabla\eta^*}{\symnabla g_1}{H^1(\O;\Rd)}
		+
		\scalarproduct{\eta_v^*}{g_2}{L^2(\O;\Rdds)}
		+
		\scalarproduct{\eta_q^*}{g_3}{L^2(\O;\Rdds)},
	\end{align*}
	which is equivalent to
	\begin{align*}
		\scalarproduct{\RR_s'(h)g}{\xi}{\HH}
		=
		\scalarproduct{g}{\RR_s'(h)^*\xi}{\HH}.
	\end{align*}
	
	Now one only has to use
	the definitions
	of
	$\AA_s$
	and
	$R$
	to obtain the equivalence between
	\cref{eq:KKT_conditions_inertia}
	and
	\cref{eq:abstract_KKT_conditions_inertia}.
\end{proof}

\section{Examples}
\label{sec:examples}

Let us conclude
with examples about
a concrete objective function,
the gradient equation in
\cref{thm:KKT_conditions_inertia}
regarding a concrete control space
and finally a realization of
the maximal monotone operator
$A$
(which will be the
von-Mises flow rule).

\paragraph{Objective function}
Let us consider a
tracking type objective function,
that is,
\begin{align*}
	\Psi(u,v,z)
	=
	\frac{1}{2}
	\norm{(u,v,z) - (u_d,v_d,z_d)}{L^2(\HH)}^2
\end{align*}
with a desired state
$(u_d,v_d,z_d) \in L^2(\HH)$.
Then
\begin{align*}
	\Psi_z(u,v,q)
	=
	\frac{1}{2}
	\norm{(u,v,(\mathbb{C}
	+ \B )^{-1}(\mathbb{C}\symnabla u - q))
	- (u_d,v_d,z_d)}{L^2(\HH)}^2
\end{align*}
and
\begin{align*}
	\Psi_z'(u,v,q) =
	\begin{pmatrix}
		\hat{u}
		\\
		v - v_d
		\\
		(\mathbb{C} + \B )^{-1}(\mathbb{C}\symnabla u - q)) - z_d,
	\end{pmatrix}
\end{align*}
where
$\hat{u}$
is such that
$-\div(
	\D \symnabla (\hat{u} - u + u_d)
	-
	((\mathbb{C} + \B )^{-1}(\mathbb{C}\symnabla u - q) - z_d)
)
=
0$,
hence,
in this example the adjoint equation	
in
\cref{eq:KKT_conditions_inertia}
has to be completed by this equation.
Note that when one uses a
finite element approach to solve
\cref{eq:KKT_conditions_inertia}
numerically,
then one can eliminate
this additional equation after multiplying
\cref{eq:KKT_conditions_adjoint_evolution_inertia}
with a test function,
that is,
taking the
$\HH$-scalar product.
When the
$\HH$-scalar product of
$Q\Psi'_z(u, v, q)$
and a test function
$(\eta_1, \eta_2, \eta_3)$
is evaluated,
the term
$\scalarproduct{\D\symnabla \hat{u}}{\eta_1}{L^2(\O;\Rdds)}$
arises,
then one can use the additional equation to
eliminate
$\hat{u}$
(respectively the equation).

\paragraph{Control space}
Let us consider the space
\begin{align*}
	\XXX_c \define
	\{
		f \in H^1(L^2(\O;\Rd)) \cap L^2(H^1(\O;\Rd))
		:
		f(0) = f(T) = 0
	\}
\end{align*}
with the scalar product
\begin{align*}
	\scalarproduct{f}{g}{\XXX_c}
	=
	\scalarproduct{\boverdot{f}}{\boverdot{g}}{L^2(L^2(\O;\Rd))}
	+ \scalarproduct{\nabla f}{\nabla g}{L^2L^2(\O;\Rdd)},
\end{align*}
see
\cref{ex:control_space_inertia}.
The Gradient equation in
\cref{eq:KKT_conditions_inertia}
then becomes
\begin{align*}
	\alpha
	\scalarproduct{\boverdot{f}}{\boverdot{g}}{L^2(L^2(\O;\Rd))}
	+
	\alpha
	\scalarproduct{\nabla f}{\nabla g}{L^2L^2(\O;\Rdd)}
	=
	\scalarproduct{\varphi_2}{g}{L^2(L^2(\O;\Rd))}
\end{align*}
for all
$g \in \XXX_c$,
which is the weak formulation of
\begin{align*}
	\bdoubleoverdot{f} + \Delta f = -\frac{\varphi_2}{\alpha}.
\end{align*}

\paragraph{Maximal monotone operator $A$}
We consider the case of linear kinematic hardening with the von Mises yield condition,
cf.
\cite{hanReddy} for a detailed description of this model. 
In this case, 
$A$ is the convex subdifferential of the indicator functional $I_{\KK(\Omega)}$
of the following set of admissible stresses
\begin{equation*}
    \KK(\Omega) := \{ \tau \in L^2(\O;\Rdds) : |\tau^D(x)| \leq \gamma \quad \text{f.a.a. } x \in \Omega\},
\end{equation*}
where $\tau^D := \tau - \frac{1}{d} \operatorname{tr}(\tau)I$
is the deviator of
$\tau \in \Rdds$ and $\gamma$
denotes the initial uni-axial yield stress, a given material parameter. 
The domain of $A = \partial I_{\KK(\Omega)}$ is trivially ${\KK(\Omega)}$,
which is nonempty, closed and convex.
For the Yosida approximation of $\partial I_{\KK(\Omega)}$,
one obtains by a straightforward calculation
\begin{equation}\label{eq:proj}
    A_\lambda(\tau) = \frac{1}{\lambda}(\tau - \pi_{\mathcal{K}}(\tau)) 
    = \frac{1}{\lambda} \max \Big\{ 0, 1 - \frac{\gamma}{|\tau^D|} \Big\} \tau^D,
\end{equation}
with the resolvent $R_\lambda = \pi_{\KK(\Omega)}$,
where $\pi_{\KK(\Omega)}$ denotes the projection on ${\KK(\Omega)}$ in $L^2(\O;\Rdds)$, i.e.,
$\pi_\KK(\sigma) := \argmin_{\tau \in \KK(\Omega)} \|\tau - \sigma\|_{L^2(\O;\Rdds)}^2$.
We get in particular
\begin{align*}
	R_\lambda(\tau) = \pi_{\KK(\Omega)}(\tau)
	= \tau - \max \Big\{ 0, 1 - \frac{\gamma}{|\tau^D|} \Big\} \tau^D,
\end{align*}
so that
\cref{eq:resolvent_pointwise_inertia}
is fulfilled.

To smoothen this function let $s > 0$
and
\begin{equation*}
        \operatorname{max}_s : \mathbb{R} \rightarrow \mathbb{R} \largespace r \mapsto 
        \begin{cases}
            \max\{0,r\}, & |r| \geq s, \\     
            \tfrac{1}{4s} (r+s)^2,  & |r| < s,
        \end{cases}                
\end{equation*}
we then set
\begin{align*}
	R_s : \Rdds \rightarrow \Rdds, \largespace \tau \mapsto \tau
	- \textnormal{max}_s \Big{(} 1 - \frac{\gamma}{|\tau^D|} \Big{)} \tau^D.
\end{align*}
One easily checks that $\operatorname{max}_s \in C^1(\mathbb{R})$
and we obtain
\begin{align}
\label{eq:smoothYosidaEstimate}
\begin{split}
	&\norm{\partial I_{\lambda,s}(\tau) - \partial I_\lambda(\tau)}{L^2(\O;\Rdds)}
	\\
	&\mediumspace
	\leq
	\frac{1}{\lambda}
	\Big(
		\int_\O \Big| \max_s \Big( 1 - \frac{\gamma}{|\tau(x)^D|} \Big)
		- \max \Big( 0, 1 - \frac{\gamma}{|\tau(x)^D|} \Big) \Big|^2
		\
		|\tau(x)^D|^2 
	\Big)^{\nicefrac{1}{2}}
	\\
	&\mediumspace
	\leq
	\frac{|\O|\gamma s}{4\lambda(1 - s)}
\end{split}
\end{align}
for all $\tau \in L^2(\O;\Rdds)$. Moreover, it is also easy to verify
that
$R_s : \Rdds \to \Rdds$
is Fr\'{e}chet differentiable with
\begin{equation*}
        	R_s'(\tau)h
	= h - \max_s'\Big(1 - \frac{\gamma }{|\tau^D|}\Big) \frac{\gamma }{|\tau^D|^3}
    	(\tau^D \colon h^D) \tau^D
	-\max_s\Big(1 - \frac{\gamma }{|\tau^D|}\Big) h^D
\end{equation*}
and the following lemma shows that $R_s$ is monotone
and Lipschitz continuous,
thus
\cref{assu:standing_assumptions_for_sec42}
\cref{assu:item:Rs_inertia}
is satisfied.
Note also that $R_s'(\tau)$ is self adjoint for every
$\tau \in \Rdds$,
hence,
$\RR_s'(h)$
is also self adjoint for all $h \in \HH$
(cf. the proof of
\cref{thm:KKT_conditions_inertia}).

\begin{lemma}
	For every $s \in (0,1)$, the mapping $R_s$ is monotone
	and Lipschitz continuous with constant 1.
\end{lemma}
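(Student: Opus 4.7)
The plan is to deduce both assertions from the differentiability structure of $R_s$. More precisely, I will show that $R_s \in C^1(\Rdds)$ and that, for every $\tau \in \Rdds$, the derivative $R_s'(\tau)$ is symmetric and satisfies $0 \leq \langle R_s'(\tau) h, h\rangle \leq |h|^2$ for all $h \in \Rdds$. Given this, the fundamental theorem of calculus along the segment from $\tau_2$ to $\tau_1$ gives
\[
    R_s(\tau_1) - R_s(\tau_2) = \int_0^1 R_s'(\tau_2 + t (\tau_1 - \tau_2))(\tau_1 - \tau_2) \, dt,
\]
and the two claims follow by taking the Frobenius inner product with $\tau_1 - \tau_2$ (for monotonicity) or estimating under the integral (for the $1$-Lipschitz bound). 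The only delicate point of the $C^1$-statement is near $\tau^D = 0$: for $|\tau^D| < \gamma/(1+s)$ we have $1 - \gamma/|\tau^D| < -s$, a range on which both $\max_s$ and $\max_s'$ vanish, so $R_s$ is literally the identity on this open neighborhood; on its complement the formula from the excerpt is plainly smooth.

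Symmetry of $R_s'(\tau)\colon h \mapsto h - a\, h^D - b (\tau^D \colon h^D) \tau^D$, where $a := \max_s(r)$, $b := \max_s'(r)\gamma/|\tau^D|^3$ and $r := 1 - \gamma/|\tau^D|$, is immediate by inspection. Writing $h = h^0 + h^D$ with $h^0 := \tfrac{1}{d}\tr(h) I$, one has $\tau^D \colon h^0 = 0$ and $|h|^2 = |h^0|^2 + |h^D|^2$, and Cauchy--Schwarz gives $(\tau^D \colon h^D)^2 \leq |\tau^D|^2 |h^D|^2$, so
\[
    \langle R_s'(\tau) h, h\rangle = |h^0|^2 + (1-a) |h^D|^2 - b (\tau^D \colon h^D)^2 \geq |h^0|^2 + (1 - a - b|\tau^D|^2) |h^D|^2.
\]
The upper bound $\langle R_s'(\tau) h, h\rangle \leq |h|^2$ reduces to $a, b \geq 0$, which holds because $\max_s \geq 0$ and $\max_s' \geq 0$. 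The lower bound reduces to $1 - a - b|\tau^D|^2 \geq 0$.

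This last inequality is the crux of the argument, and I would verify it by a case distinction on $r$, using the identity $b|\tau^D|^2 = \max_s'(r)(1-r)$. For $r \leq -s$ one has $a = 0$ and $\max_s'(r) = 0$, yielding $1$; for $r \geq s$ one has $a = r$ and $\max_s'(r) = 1$, yielding $1 - r - (1 - r) = 0$. The only nontrivial range is $-s < r < s$, in which $a = (r+s)^2/(4s)$ and $\max_s'(r) = (r+s)/(2s)$, and after a short algebraic manipulation one obtains
\[
    1 - a - b|\tau^D|^2 = \frac{(1-r)^2 - (1-s)^2}{4s},
\]
which is strictly positive since $r < s < 1$ forces $1-r > 1-s > 0$. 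This closed-form simplification is the one step I expect to demand care; the rest is routine bookkeeping.
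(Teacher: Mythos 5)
Your proof is correct, and it takes a genuinely different route from the paper's. The paper works directly at the level of function values: it invokes the convexity (gradient) inequality $\max_s(x) - \max_s(y) \geq \max_s'(y)(x-y)$, assumes WLOG that $|\sigma^D| \geq |\tau^D|$ and that the two $\max_s$-values are ordered accordingly, and then carries out a careful term-by-term estimate of $|\max_s(1-\gamma/|\tau^D|)\tau^D - \max_s(1-\gamma/|\sigma^D|)\sigma^D|$ to get the $1$-Lipschitz bound, from which monotonicity then drops out by Cauchy--Schwarz. You instead differentiate $R_s$ and prove the spectral bound $0 \leq R_s'(\tau) \leq I$ (using that $R_s'(\tau)$ is symmetric, which matters: without symmetry the quadratic-form bounds $0 \leq \langle R_s'(\tau)h,h\rangle \leq |h|^2$ would not yield $\|R_s'(\tau)\| \leq 1$), and then integrate along segments. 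Your reduction of everything to the scalar inequality $1 - \max_s(r) - \max_s'(r)(1-r) \geq 0$, verified by the three-case computation culminating in $\bigl((1-r)^2 - (1-s)^2\bigr)/(4s) \geq 0$, is a clean distillation of the same convexity that the paper uses in a messier way; I checked the algebra and the identities $b|\tau^D|^2 = \max_s'(r)(1-r)$ and $\tau^D\colon h = \tau^D\colon h^D$, $h^D\colon h = |h^D|^2$ used along the way, and they are all correct. The $C^1$ claim near $\tau^D = 0$ is also handled cleanly by your observation that $R_s = I$ on the open set $|\tau^D| < \gamma/(1+s)$; the paper silently restricts to $|\tau^D|,|\sigma^D|>0$ at that point. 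One practical advantage of your route is that the intermediate result $0 \leq R_s'(\tau) \leq I$ directly recovers (and strengthens to a precise constant) the estimates of \cref{lem:Rs_derivative_estimate_inertia}, whereas the paper derives those bounds in the opposite direction, from the Lipschitz and monotonicity properties of $R_s$.
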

\begin{proof}
	It is well known that,
	since
	$\max_s$
	is continuously differentiable
	and convex,
	\begin{align}
	\label{eq:local011122019}
		\max_s(x) - \max_s(y)
		\geq
		\max_s(y)'(x - y)
	\end{align}
	holds for all
	$x,y \in \R$.
	
	Let
	$\tau, \sigma \in \Rdds$,
	w.l.o.g.
	we can assume that
	$|\sigma^D| \geq |\tau^D| > 0$
	and
	\begin{align*}
		\max_s
		\Big{(}
			1 - \frac{\gamma}{|\sigma^D|}
		\Big{)}
		\geq
		\max_s
		\Big{(}
			1 - \frac{\gamma}{|\tau^D|}
		\Big{)},
	\end{align*}
	then,
	using
	\cref{eq:local011122019}
	with
	$x = 1 - \frac{\gamma}{|\tau^D|}$
	and
	$y = 1 - \frac{\gamma}{|\sigma^D|}$,
	we get
	\begin{align*}
		|\max_s
		&
		\Big{(}
			1 - \frac{\gamma}{|\tau^D|}
		\Big{)}
		\tau^D
		-
		\max_s
		\Big{(}
			1 - \frac{\gamma}{|\sigma^D|}
		\Big{)}
		\sigma^D |
		\\
		&\leq
		(
			\max_s
			\Big{(}
				1 - \frac{\gamma}{|\sigma^D|}
			\Big{)}
			- 
			\max_s
			\Big{(}
				1 - \frac{\gamma}{|\tau^D|}
			\Big{)}
		)
		\ |\tau^D|
		+
		\max_s
		\Big{(}
			1 - \frac{\gamma}{|\sigma^D|}
		\Big{)}
		| \tau^D - \sigma^D |
		\\
		&\leq
		\gamma \max_s'
		\Big(
			1 - \frac{\gamma}{|\sigma^D|}
		\Big)
		| \tau^D | \
		|
			\frac{1}{| \tau^D |}
			-
			\frac{1}{| \sigma^D |}
		|
		+
		\max_s
		\Big{(}
			1 - \frac{\gamma}{|\sigma^D|}
		\Big{)}
		| \tau^D - \sigma^D |,
	\end{align*}
	taking into account that
	\begin{align*}
		\gamma | \tau^D | \
		|
			\frac{1}{| \tau^D |}
			-
			\frac{1}{| \sigma^D |}
		|
		=
		\gamma \Big{|}
			\frac{| \tau^D | - | \sigma^D |}{| \sigma^D |}
		\Big{|}
		\leq
		\frac{\gamma}{| \sigma^D |}
		| \tau^D - \sigma^D |
	\end{align*}
	we obtain
	\begin{align*}
		|
			\max_s&
			\Big{(}
				1 - \frac{\gamma}{|\tau^D|}
			\Big{)}
			\tau^D
			-
			\max_s
			\Big{(}
				1 - \frac{\gamma}{|\sigma^D|}
			\Big{)}
			\sigma^D
		|
		\\
		&\leq
		\Big{(}
			\max_s'
			\Big(
				1 - \frac{\gamma}{|\sigma^D|}
			\Big)
			\frac{\gamma}{| \sigma^D |}
			+
			\max_s
			\Big{(}
				1 - \frac{\gamma}{|\sigma^D|}
			\Big{)}
		\Big{)}
		| \tau^D - \sigma^D |
		\\
		&\leq
		\max_s(1) | \tau - \sigma |
		=
		| \tau - \sigma |,
	\end{align*}
	where we used
	\cref{eq:local011122019}
	again with
	$x = 1$
	and
	$y = 1 - \frac{\gamma}{|\sigma^D|}$,
	and the fact that
	$| \tau^D - \sigma^D | \leq | \tau - \sigma |$.
	This proves
	the Lipschitz continuity of
	$R_s$.
	We also get
	\begin{align*}
		(R_s(\tau) &- R_s(\sigma)) \colon (\tau - \sigma)
		\\
		&=
		| \tau - \sigma |^2
		-
		\Big{(}
			\max_s
			\Big{(}
				1 - \frac{\gamma}{|\tau^D|}
			\Big{)}
			\tau^D
			-
			\max_s
			\Big{(}
				1 - \frac{\gamma}{|\sigma^D|}
			\Big{)}
			\sigma^D
		\Big{)}
		\colon
		(\tau - \sigma)
		\\
		&\geq
		| \tau - \sigma |^2
		-
		| \tau - \sigma |^2
		=
		0
	\end{align*}
	which shows
	the monotonicity of
	$R_s$.
\end{proof}

\subsection*{Acknowledgements}

I would like to thank Christian Meyer
(Technische Universit\"at
Dortmund)
for fruitful discussions regarding
\cref{cor:W1r_existence_inertia}.

\bibliographystyle{jnsao}
\bibliography{references}

\begin{thebibliography}{10}

\bibitem{anzellotti1987dynamical}
G{.\nobreak\kern 0.33333em}Anzellotti and S{.\nobreak\kern 0.33333em}Luckhaus,
  Dynamical evolution of elasto-perfectly plastic bodies, \emph{Applied
  Mathematics and Optimization} 15 (1987),  121--140,
  \href{https://dx.doi.org/10.1007/BF01442650}{\nolinkurl{doi:10.1007/bf01442650}}.

\bibitem{babadjian2015approximation}
J.\,F{.\nobreak\kern 0.33333em}Babadjian and M.\,G{.\nobreak\kern
  0.33333em}Mora, Approximation of dynamic and quasi-static evolution problems
  in elasto-plasticity by cap models, \emph{Quarterly of Applied Mathematics}
  (2015),  265--316.

\bibitem{bartels2008thermoviscoplasticity}
S{.\nobreak\kern 0.33333em}Bartels and T{.\nobreak\kern
  0.33333em}Roub{\'\i}{\v{c}}ek, Thermoviscoplasticity at small strains,
  \emph{ZAMM-Journal of Applied Mathematics and Mechanics/Zeitschrift f{\"u}r
  Angewandte Mathematik und Mechanik: Applied Mathematics and Mechanics} 88
  (2008),  735--754,
  \href{https://dx.doi.org/10.1002/zamm.200800042}{\nolinkurl{doi:10.1002/zamm.200800042}}.

\bibitem{zeidler3}
L{.\nobreak\kern 0.33333em}Boron and E{.\nobreak\kern 0.33333em}Zeidler,
  \emph{Nonlinear Functional Analysis and its Applications: III: Variational
  Methods and Optimization}, Springer New York, 1984,
  \href{https://dx.doi.org/10.1007/978-1-4612-5020-3}{\nolinkurl{doi:10.1007/978-1-4612-5020-3}}.

\bibitem{brezis}
H{.\nobreak\kern 0.33333em}Br{\'e}zis, \emph{Op{\'e}rateurs Maximaux
  Monotones}, North-Holland, Amsterdam, 1973.

\bibitem{paper_stress}
S.\,W{.\nobreak\kern 0.33333em}Christian~Meyer, Optimal control of perfect
  plasticity part I: Stress tracking, \emph{Mathematical Control \& Related
  Fields}  (2021),
  \href{https://dx.doi.org/10.3934/mcrf.2021022}{\nolinkurl{doi:10.3934/mcrf.2021022}}.

\bibitem{dal2014quasistatic}
G{.\nobreak\kern 0.33333em}Dal~Maso and R{.\nobreak\kern 0.33333em}Scala,
  Quasistatic evolution in perfect plasticity as limit of dynamic processes,
  \emph{Journal of Dynamics and Differential Equations} 26 (2014),  915--954,
  \href{https://dx.doi.org/10.1007/s10884-014-9409-7}{\nolinkurl{doi:10.1007/s10884-014-9409-7}}.

\bibitem{davoli2019dynamic2}
E{.\nobreak\kern 0.33333em}Davoli, T{.\nobreak\kern
  0.33333em}Roub{\'\i}{\v{c}}ek, and U{.\nobreak\kern 0.33333em}Stefanelli,
  Dynamic perfect plasticity and damage in viscoelastic solids,
  \emph{ZAMM-Journal of Applied Mathematics and Mechanics/Zeitschrift f{\"u}r
  Angewandte Mathematik und Mechanik} 99 (2019),  e201800161,
  \href{https://dx.doi.org/10.1002/zamm.201800161}{\nolinkurl{doi:10.1002/zamm.201800161}}.

\bibitem{davoli2019dynamic}
E{.\nobreak\kern 0.33333em}Davoli and U{.\nobreak\kern 0.33333em}Stefanelli,
  Dynamic perfect plasticity as convex minimization, \emph{SIAM Journal on
  Mathematical Analysis} 51 (2019),  672--730,
  \href{https://dx.doi.org/10.1137/17M1148864}{\nolinkurl{doi:10.1137/17m1148864}}.

\bibitem{goldberg}
H{.\nobreak\kern 0.33333em}Goldberg, W{.\nobreak\kern 0.33333em}Kampowsky, and
  F{.\nobreak\kern 0.33333em}Tr{\"o}ltzsch, On Nemytskii operators in Lp-spaces
  of abstract functions, \emph{Mathematische Nachrichten} 155 (1992),
  127--140,
  \href{https://dx.doi.org/10.1002/mana.19921550110}{\nolinkurl{doi:10.1002/mana.19921550110}}.

\bibitem{groger}
K{.\nobreak\kern 0.33333em}Gr{\"o}ger, \emph{Initial Value Problems for
  Elastoplastic and Elasto-viscoplastic Systems}, BSB BG Teubner
  Verlagsgesellschaft, 1979.

\bibitem{Gro89}
K{.\nobreak\kern 0.33333em}Gr{\"{o}}ger, A {$W^{1,p}$}-estimate for solutions
  to mixed boundary value problems for second order elliptic differential
  equations, \emph{Math. Ann.} 283 (1989),  679--687,
  \href{https://dx.doi.org/10.1007/bf01442860}{\nolinkurl{doi:10.1007/bf01442860}}.

\bibitem{hanReddy}
W{.\nobreak\kern 0.33333em}Han and B.\,D{.\nobreak\kern 0.33333em}Reddy,
  \emph{Plasticity: Mathematical Theory and Numerical Analysis}, volume~9,
  Springer Science \& Business Media, 2012,
  \href{https://dx.doi.org/10.1007/978-1-4614-5940-8}{\nolinkurl{doi:10.1007/978-1-4614-5940-8}}.

\bibitem{herzog}
R{.\nobreak\kern 0.33333em}Herzog, C{.\nobreak\kern 0.33333em}Meyer, and
  G{.\nobreak\kern 0.33333em}Wachsmuth, Integrability of displacement and
  stresses in linear and nonlinear elasticity with mixed boundary conditions,
  \emph{Journal of Mathematical Analysis and Applications} 382 (2011),
  802--813,
  \href{https://dx.doi.org/10.1016/j.jmaa.2011.04.074}{\nolinkurl{doi:10.1016/j.jmaa.2011.04.074}}.

\bibitem{HerzogMeyerWachsmuth2010:2}
R{.\nobreak\kern 0.33333em}Herzog, C{.\nobreak\kern 0.33333em}Meyer, and
  G{.\nobreak\kern 0.33333em}Wachsmuth, B- and strong stationarity for optimal
  control of static plasticity with hardening, \emph{SIAM Journal on
  Optimization} 23 (2013),  321--352,
  \href{https://dx.doi.org/10.1137/110821147}{\nolinkurl{doi:10.1137/110821147}}.

\bibitem{lubliner2008plasticity}
J{.\nobreak\kern 0.33333em}Lubliner, \emph{Plasticity Theory}, Courier
  Corporation, 2008.

\bibitem{allaire}
A{.\nobreak\kern 0.33333em}Maury, G{.\nobreak\kern 0.33333em}Allaire, and
  F{.\nobreak\kern 0.33333em}Jouve, Elasto-plastic shape optimization using the
  level set method, \emph{SIAM Journal on Control and Optimization} 56 (2018),
  556--581,
  \href{https://dx.doi.org/10.1137/17M1128940}{\nolinkurl{doi:10.1137/17m1128940}}.

\bibitem{paper_abstract}
H{.\nobreak\kern 0.33333em}Meinlschmidt, C{.\nobreak\kern 0.33333em}Meyer, and
  S{.\nobreak\kern 0.33333em}Walther, {Optimal control of an abstract evolution
  variational inequality with application to homogenized plasticity},
  \emph{{Journal of Nonsmooth Analysis and Optimization}} {Volume 1} (2020),
  \href{https://dx.doi.org/10.46298/jnsao-2020-5800}{\nolinkurl{doi:10.46298/jnsao-2020-5800}},
  \url{https://jnsao.episciences.org/6467}.

\bibitem{paper_displacement}
C{.\nobreak\kern 0.33333em}Meyer and S{.\nobreak\kern 0.33333em}Walther,
  Optimal control of perfect plasticity part II: Displacement tracking,
  \emph{SIAM Journal on Control and Optimization} 59 (2021),  2498--2523,
  \href{https://dx.doi.org/10.1137/20M1327331}{\nolinkurl{doi:10.1137/20m1327331}}.

\bibitem{OttosenRistinmaa2005:1}
N{.\nobreak\kern 0.33333em}Ottosen and M{.\nobreak\kern 0.33333em}Ristinmaa,
  \emph{The Mechanics of Constitutive Modeling}, Elsevier, Amsterdam, 2005,
  \href{https://dx.doi.org/10.1016/B978-0-08-044606-6.X5000-0}{\nolinkurl{doi:10.1016/b978-0-08-044606-6.x5000-0}}.

\bibitem{showalter}
R.\,E{.\nobreak\kern 0.33333em}Showalter, \emph{Monotone Operators in Banach
  Space and Nonlinear Partial Differential Equations}, volume~49, American
  Mathematical Soc., 2013.

\bibitem{simohughes}
J{.\nobreak\kern 0.33333em}Simo and T{.\nobreak\kern 0.33333em}Hughes,
  \emph{Computational Inelasticity}, Interdisciplinary Applied Mathematics,
  Springer, New York, 1998,
  \href{https://dx.doi.org/10.1007/b98904}{\nolinkurl{doi:10.1007/b98904}}.

\bibitem{wachsmuth}
G{.\nobreak\kern 0.33333em}Wachsmuth, \emph{Optimal Control of Quasistatic
  Plasticity}, Dr. Hut, 2011.

\bibitem{Wac12}
G{.\nobreak\kern 0.33333em}Wachsmuth, Optimal control of quasi-static
  plasticity with linear kinematic hardening, {P}art {I}: {E}xistence and
  discretization in time, \emph{SIAM J. Control Optim.} 50 (2012),  2836--2861
  + loose erratum,
  \href{https://dx.doi.org/10.1137/110839187}{\nolinkurl{doi:10.1137/110839187}}.

\bibitem{Wac15}
G{.\nobreak\kern 0.33333em}Wachsmuth, Optimal control of quasistatic plasticity
  with linear kinematic hardening {II}: {R}egularization and differentiability,
  \emph{Z. Anal. Anwend.} 34 (2015),  391--418,
  \href{https://dx.doi.org/10.4171/ZAA/1546}{\nolinkurl{doi:10.4171/zaa/1546}}.

\bibitem{Wac16}
G{.\nobreak\kern 0.33333em}Wachsmuth, Optimal control of quasistatic plasticity
  with linear kinematic hardening {III}: {O}ptimality conditions, \emph{Z.
  Anal. Anwend.} 35 (2016),  81--118,
  \href{https://dx.doi.org/10.4171/ZAA/1556}{\nolinkurl{doi:10.4171/zaa/1556}}.

\bibitem{walther}
S{.\nobreak\kern 0.33333em}Walther, \emph{Optimal Control of Plasticity
  Systems}, PhD thesis, Universit{\"a}tsbibliothek Dortmund, 2020,
  \href{https://dx.doi.org/10.17877/DE290R-21905}{\nolinkurl{doi:10.17877/de290r-21905}}.

\bibitem{yousept2017optimal}
I{.\nobreak\kern 0.33333em}Yousept, Optimal control of non-smooth hyperbolic
  evolution Maxwell equations in type-II superconductivity, \emph{SIAM Journal
  on Control and Optimization} 55 (2017),  2305--2332,
  \href{https://dx.doi.org/10.1137/16M1074229}{\nolinkurl{doi:10.1137/16m1074229}}.

\bibitem{zeidler2a}
E{.\nobreak\kern 0.33333em}Zeidler and L{.\nobreak\kern 0.33333em}Boron,
  \emph{Nonlinear Functional Analysis and Its Applications: II/ A: Linear
  Monotone Operators}, Monotone operators / transl. by the author and by Leo F.
  Boron, Springer New York, 1989,
  \href{https://dx.doi.org/10.1007/978-1-4612-0985-0}{\nolinkurl{doi:10.1007/978-1-4612-0985-0}}.

\end{thebibliography}

\end{document}